\newtheorem{thm}{Theorem}[section]
\newtheorem{lem}{Lemma}[section]
\newtheorem{cor}{Corollary}[section]
\newtheorem{pro}{Proposition}[thm]
\theoremstyle{definition}
\newtheorem{defi}{Definition}[section]
\newtheorem{remark}{Remark}[section]
\begin{document}
\title{The extremal graphs of order trees and  their topological indices\footnote{This work is supported
by NSFC Grant No. 11971274, 11531011, 11671344 .}}
\author{
{\small Rui Song$^{a}$,\ \ Qiongxiang Huang$^{a,}$\footnote{Corresponding author.
\newline{\it \hspace*{5mm}Email addresses:} huangqx@xju.edu.cn(Q.X. Huang).}\ \   and Peng Wang$^{a}$}\\[2mm]
\footnotesize $^a$College of Mathematics and Systems Science, Xinjiang University, Urumqi, Xinjiang 830046, China\\
}
\date{}
\maketitle {\flushleft\large\bf Abstract:}
Recently, D. Vuki$\check{c}$evi$\acute{c}$ and  J. Sedlar in \cite{Vuki} introduced an order ``$\preceq$" on $\mathcal{T}_n$, the set of  trees on $n$ vertices, such that the topological index $F$ of a graph is a function defined on the order set $\langle\mathcal{T}_n,\preceq\rangle$. It provides a new approach to determine the extremal graphs with respect to  topological index $F$. By using the method they determined the common  maximum and/or minimum graphs of $\mathcal{T}_n$ with respect to topological indices  of   Wiener type and anti-Wiener type. Motivated by their researches we further study the  order set $\langle\mathcal{T}_n,\preceq\rangle$ and give a criterion to determine its order, which enable us to get the common extremal graphs in four  prescribed subclasses of $\langle\mathcal{T}_n,\preceq\rangle$. All these extremal graphs  are  confirmed to be the common maximum and/or minimum graphs with respect to the  topological indices  of   Wiener type and anti-Wiener type. Additionally, we calculate the exact values of Wiener index for the extremal graphs in the order sets $\langle\mathcal{C}(n,k),\preceq\rangle$, $\langle\mathcal{T}_{n}(q),\preceq\rangle$ and $\langle\mathcal{T}_{n}^\Delta,\preceq\rangle$.
\begin{flushleft}
\textbf{Keywords:} Order set, Tree, Extremal graph,  Topological index $F$ of a graph.

\end{flushleft}
\textbf{AMS Classification:} 05C50
\section{Introduction}
The Wiener index of a graph $G$ defined by $W(G) =\sum_{u,v\in V}d(u, v)$ is the  first topological index introduced early in 1947 by H.Wiener in \cite{Wiener}. With time other topological indices, such as  some variations and modifications of Wiener index, are introduced and studied because of their chemical applications and mathematical properties \cite{Gutman-1,Nikoli,Liu1,Chen,Vukic}. Some of the topological indices are related with distance $d(u,v)$ as in $W(G)$. The researches for such  topological indices are usually focus on some classes of graphs, especially trees, and produce many results published in various academic journals, one can refer to \cite{Bonchev,Liu,Wang,Pandey,Nikoli,Liu1,Chen,Zhang2,Vukic} for references. All theses results   mainly include  evaluating the bounds of the topological indices and characterizing  the corresponding extremal graphs. Recently, D. Vuki$\check{c}$evi$\acute{c}$ and J. Sedlar in \cite{Vuki} introduced an  order on trees by edge division vector and  established the relationship between  the order and  topological indices. It  allows us to study this problem in a uniform way and then  obtain general conclusions.

Let $G$ be a simple graph on $n$ vertices. For a vertex $v\in V(G)$, let $N_G(v)$ be the neighbors of $v$ and $d_G(v)=|N_G(v)|$ the degree of $v$. A vertex $v$ is called a pendent vertex if $d_G(v)=1$.   For a pair  of vertices $u,v\in V(G)$ we define distance $d_G(u,v)$ as the length of the shortest path connecting vertices $u$ and $v$.

A tree $T$ is a connected graph without cycle. A vertex  $v$ of $T$ is a branching vertex if $d_T(v)\geq 3$. Given a tree $T$, let $e=uv\in E(T)$ be an edge, $T_u$ and $T_v$ be respectively the two components of $T-e$ containing $u$ and $v$. By $n_u(e)$ (resp., $n_v(e)$) we  denote the number of vertices whose distance to vertex $u$ (resp., $v$) is smaller than the distance to vertex $v$ (resp., $u$), i.e., $n_u(e)=|T_u|$ and  $n_v(e)=|T_v|$. Also, we  write  $n_u'(e)$ and $n_v'(e)$ for the tree $T'$. Furthermore, for an edge $e=uv\in E(T)$ we define $\mu(e)=\min\{n_u(e),n_v(e)\}$. By definition, we have $n_u(e)+n_v(e)=n$. It follows that $\mu(e)\leq \lfloor\frac{n}{2}\rfloor$.

Let $\mathcal{T}_n$ denote the set of  trees on $n$ vertices. For a tree $T\in \mathcal{T}_n$, let $r_i(T)$ denote the number of edges  for which $\mu(e)=i$, i.e., $r_i(T)=|\{e\in E(T)\mid \mu(e)=i\}|$. It is clear that $r_1(T)$ is just the number of pendent edges of $T$ and  $r_i(T)=0$ for every $i>\lfloor\frac{n}{2}\rfloor$ due to  $\mu(e)\leq \lfloor\frac{n}{2}\rfloor$. The edge division vector $\mathbf{r}(T)$ of a tree $T$ is defined as a vector
$\mathbf{r}(T)=(r_1(T),r_2(T),\ldots,r_{\lfloor\frac{n}{2}\rfloor}(T))$. We will write only $\mathbf{r}$ and $r_i$ when it does not lead to confusion.
Let $\mathbf{r}$ and $\mathbf{r}'$ be  respectively the edge division vectors of $T$ and $T'$ in $ \mathcal{T}_n$. Recently, D. Vuki$\check{c}$evi$\acute{c}$ and J. Sedlar in \cite{Vuki}  defined an order: $\mathbf{r}\preceq \mathbf{r}'$ if and only if  the inequality $\sum\limits_{i=k}^{\lfloor\frac{n}{2}\rfloor}r_i\leq \sum\limits_{i=k}^{\lfloor\frac{n}{2}\rfloor}r'_i$
holds for every $k=1,2,\ldots,\lfloor\frac{n}{2}\rfloor$. If the inequality is strict for at least one $k$, then we say that $\mathbf{r}\prec \mathbf{r}'$. Naturally, they  introduced an  order of trees by using edge division vector as following: for $T,T'\in\mathcal{T}_n$,   $T\preceq T'$ if $\mathbf{r}\preceq \mathbf{r}'$ (with $T\prec T'$ if $\mathbf{r}\prec \mathbf{r}'$). It is worth mentioning that such an order   ``$\preceq$" satisfies the reflexivity and  transitivity but  not antisymmetry, and so is not  partial order (one can refer to the Remark \ref{rem-1}), i.e.,  $\langle\mathcal{T}_n,\preceq\rangle$ is not a poset but  order set. Thus by $T\approx T'$ we mean $\mathbf{r}(T)=\mathbf{r}(T')$. However, if a tree $T$ is uniquely determined by its edge division vector $\mathbf{r}(T)$, then $\mathbf{r}(T)=\mathbf{r}(T')$ implies that $T=T'$.

Since $\mathcal{T}_n$ is  finite set, for any subset $\mathcal{S}\subseteq\mathcal{T}_n$,  $\langle\mathcal{S},\preceq\rangle$  must have a unique maximum element $T^+$ called  the maximal graph (with respect to $\mathcal{S}$), and  minimum element $T^-$ called  the minimal graph  (with respect to $\mathcal{S}$). Also in \cite{Vuki} D. Vuki$\check{c}$evi$\acute{c}$ and J. Sedlar established the relationship between $\langle\mathcal{T}_n,\preceq\rangle$ and topological index $F$ of trees by
defining  $F:\mathcal{T}_n\rightarrow \mathbb{R}$ with the  monotonous properties that for $T,T'\in \mathcal{T}_n$, if  $T\preceq T'$ implies $F(T)\leq F(T')$ (resp., $F(T)\geq F(T')$)  then $F$ is  refereed to a topological index of  Wiener type (resp., anti-Wiener type). Thus if $F$ is confirmed to be a topological index of  Wiener type, then $T^+$  will be the maximum  graph for this index, and $T^-$ the minimum graph while the reverse holds for a topological index of anti-Wiener type. This gives us a general approach to find the corresponding maximum  or minimum graph and evaluate bounds of topological index $F$ in some class. By using the  approach, D. Vuki$\check{c}$evi$\acute{c}$ and J. Sedlar in \cite{Vuki} proved that the $P_n$ and $S_n$ are respectively the maximal  and minimal elements in $\langle\mathcal{T}_n,\preceq\rangle$. Moreover, by verifying the monotonicity of the topological index, they confirm that $P_n$ and $S_n$ are also the common maximum graph and/or minimum graph with respect to the indices of  Wiener index, Steiner $k$-Wiener index,  modified Wiener indices and  variable Wiener indices, respectively.

Inspired  by their research works  in \cite{Vuki}, in this paper we first give a criterion to determine the  orders of trees, which leads to some  graph transformations  preserving   this  order in $\langle\mathcal{T}_n,\preceq\rangle$, and then we determine the  maximum graph and/or  minimum graph in  the following classes:\\
(1) $\mathcal{C}(n,k)$, the set of caterpillar trees on $n$ vertices with respect to the path $P_k$.\\
(2) $\mathcal{T}_n(q)$, the subset of $\mathcal{T}_n$ with exactly $q$ pendent vertices.\\
(3) $\mathcal{T}(n,k-1)$, the subset of $\mathcal{T}_n$ with diameter $k-1$.\\
(4) $\mathcal{T}_n^\Delta$, the subset of $\mathcal{T}_n$ with maximum degree $\Delta$.\\
Next, by verifying the monotonicity of the topological indices, we show that the above extremal graphs are respectively the common maximum graph and/or  minimum graph  with respect to the following topological indices: \\
(1) The hyper-Wiener index proposed by M. Randi$\acute{c}$ in \cite{Rand}, i.e.,
$$WW(T)=\sum_{\{u,v\}\in V(T)}\dbinom{1+d(u,v)}{2}=
\sum_{\{u,v\}\in V(T)}[\frac{1}{2}d(u,v)+\frac{1}{2}d^2(u,v)],$$
where $d(u,v)$ is the distance between  vertices $u$ and $v$.\\
(2) The Wiener-Hosoya index proposed by M. Randi$\acute{c}$ in \cite{Randi}, i.e.,
$$h(T)=\sum_{e\in E(T)}(h(e)+h[e]),$$
where $h(e)$ is the product of the numbers of the vertices in each component of $T-e$, and $h[e]$ is the product of the numbers of the vertices in each component of $T-\{u,v\}$ ($\{u,v\}$ are two end vertices of $e$).\\
(3) The degree distance proposed by D. J. Klein et al. in \cite{Klein}, i.e.,
$$D'(T)=\sum_{\{u,v\}\in V(T)}(d(u)+d(v))d(u,v)=4\sum_{\{u,v\}\in V(T)}d(u,v)-n(n-1).$$
\\
(4) The Gutman index proposed by I. Gutman in \cite{Gutman}, i.e.,
$$Gut(T)=\sum_{\{u,v\}\subseteq V(T)}d(u)d(v)d(u,v)=4\sum_{\{u,v\}\in V(T)}d(u,v)-(2n-1)(n-1).$$\\
(5) The second atom-bond connectivity index proposed by A. Graovac and M. Ghorbani in \cite{Graovac}, i.e.,
$$ABC_2(T)=\sum_{e=uv\in E(T)}\sqrt{\frac{n_u(e)+n_v(e)-2}{n_u(e)n_v(e)}},$$
where $n_u(e)$ (resp., $n_v(e)$) denote the number of vertices whose distance to vertex $u$ (resp., $v$) is smaller than the distance to vertex $v$ (resp., $u$).

The present paper is organized as follows. In section 2, some notions  and properties related with tree are introduced. In section 3, we  give a lemma that  is our criterion  to determine the  order of trees. By using this criterion we can set up  some  graph transformations that  preserve   the  order in $\langle\mathcal{T}_n,\preceq\rangle$. In section 4, we give the maximum and minimum graphs in  $\langle\mathcal{C}(n,k),\preceq\rangle$.  In section 5, we give the  minimum graph in  $\langle\mathcal{T}(n,k-1),\preceq\rangle$. In section 6, we give the maximum and minimum graphs in  $\langle\mathcal{T}_{n}(q),\preceq\rangle$. In section 7, we give the maximum  graph in $\langle\mathcal{T}_{n}^\Delta,\preceq\rangle$.  Finally, in section 8, we prove that all the extremal graphs mentioned above are the common  maximum graph and/or  minimum graph with respect to  all the topological indices of  $WW(T)$,  $h(T)$, $D'(T)$, $Gut(T)$ and $ABC_2(T)$. Finally, we give a Table 3 that summarize all the related results old and new. At last of this section, as an example to determine the bounds of topological indices,  we calculate the exact values of Wiener index for the extremal graphs in the order sets $\langle\mathcal{C}(n,k),\preceq\rangle$, $\langle\mathcal{T}_{n}(q),\preceq\rangle$ and $\langle\mathcal{T}_{n}^\Delta,\preceq\rangle$.

\section{Preliminaries}

Let $T$ be a tree of order $n$. A vertex $u$ of $T$ is said to be \emph{centroidal vertex} of $T$  if $n_u(e)\geq \frac{n}{2}$ for any edge $e\in E(T)$ incident to $u$, and a centroidal vertex $u$ is said to be proper if $n_u(e)> \lceil\frac{n}{2}\rceil$ for any edge $e\in E(T)$ incident to $u$. For a centroidal vertex $u$, let $N_T(u)=\{v_1,v_2,\ldots,v_r\}$ and denote by $T_{v_i}(uv_i)$  the component of $T-uv_i$ containing $v_i$. It is clear that $u$ is a centroidal (resp., proper centroidal)  vertex of $T$  if and only if $|T_{v_i}(uv_i)|\le \frac{n}{2}$ (resp., $|T_{v_i}(uv_i)|< \lfloor\frac{n}{2}\rfloor$) for $i=1,2,\ldots,r$.

For a tree $T$ of order $n=2,3$, it is clear that  $T$ has   centroidal vertices. For $n>3$, let $v$ be a pendent vertex of $T$ adjacent to $v'$, and then $T'=T-v$ has centroidal vertex  $u$  by  induction hypothesis. Let $N_{T'}(u)=\{v_1,v_2,\ldots,v_r\}$ and $T_{v_i}'(uv_i)$ be the component of $T'-uv_i$ containing $v_i$. By assumption, for each $1\le i\le r$ we have
$$|T_{v_i}'(uv_i)|=n_{v_i}'(uv_i)=n-1-n_u'(uv_i)\le n-1-\frac{n-1}{2}=\frac{n-1}{2}< \frac{n}{2}.$$
On the other aspect, without loss of generality, assume that  $v'$ belongs to  $T_{v_1}'(uv_1)$. Then $T_{v_1}(uv_1)=T_{v_1}'(uv_1)+v'v$ and $T_{v_i}(uv_i)=T_{v_i}'(uv_i)$ are also the components of $T-u$ containing $v_i$ for $i=2,3,\ldots,r$. If $|T_{v_1}'(uv_1)|<\lfloor\frac{n}{2}\rfloor$, then $|T_{v_i}(uv_i)|\le \lfloor\frac{n}{2}\rfloor$ for all $i=1,2,\ldots,r$, and thus $u$ is also a centroidal vertex of $T$. Otherwise, $|T_{v_1}'(uv_1)|=\lfloor\frac{n}{2}\rfloor$. We have $|T_{v_1}(uv_1)|=\lfloor\frac{n}{2}\rfloor+1$ and  thus $|T_{v_2}(uv_2)|+\cdots+|T_{v_r}(uv_r)|=n-1-|T_{v_1}(uv_1)|=n-2-\lfloor\frac{n}{2}\rfloor<\lfloor\frac{n}{2}\rfloor$. It implies that  by replacing $v$ with a pendent vertex of $T$ in $T_{v_2}(uv_2)$,  the centroidal vertex $u$ of $T-v$ is also a centroidal vertex of $T$.  Therefore, the centroidal vertex of $T$ always exists.

Suppose that  $u_1$ and $u_2$ be two  centroidal vertices of $T$ and $P=u_1\cdots u_2$ be the path connecting $u_1$ and $u_2$. We claim that $P=u_1u_2$. Since otherwise, let $P=u_1x_1\cdots x_2u_2$ where $x_1$ may be equal to $x_2$. We have
$n\le\lceil\frac{n}{2}\rceil+\lceil\frac{n}{2}\rceil\le |T_{u_1}(u_1x_1)|+|T_{u_2}(u_2x_2)|\leq n-1 $, a contradiction. It also implies that $T$ has at most two centroidal vertices. Summering above arguments, we get the following simple results mentioned in \cite{Ma,Jordan}.

\begin{lem}\label{lem-2}
Let $T$ be a tree. Then the following statements hold:\\
(1) $T$ has at least one and at most two centroidal vertices;\\
(2) $T$ has two centroidal vertices $u$ and $v$ if and only if  $uv$ is an edge of $T$ such that the two components of $T-uv$ have the same order.
\end{lem}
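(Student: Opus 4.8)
The plan is to handle the two parts separately, drawing on the existence-and-adjacency analysis already carried out in the discussion preceding the statement.

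For part (1), the existence of a centroidal vertex has in effect been proved by the inductive construction above: one deletes a pendent vertex $v$, takes a centroidal vertex $u$ of $T-v$, and checks that $u$ (possibly after relocating $v$ into a smaller branch) remains centroidal in $T$. A self-contained alternative I would keep in reserve is to choose $u$ minimizing the branch weight $w(u)=\max_i|T_{v_i}(uv_i)|$ over all vertices, and to show $w(u)\le\frac{n}{2}$: were some branch $C=T_{v}(uv)$ to satisfy $|C|>\frac{n}{2}$, then at the neighbor $v\in C$ the branch toward $u$ has order $n-|C|<\frac{n}{2}<|C|$ while every other branch is a proper subtree of $C$ of order at most $|C|-1$, so $w(v)<w(u)$, contradicting minimality. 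For the ``at most two and pairwise adjacent'' claim I would reuse the path inequality: if $u_1,u_2$ are distinct centroidal vertices joined by a path $P=u_1x_1\cdots x_2u_2$ of length at least two, then $T_{u_1}(u_1x_1)$ and $T_{u_2}(u_2x_2)$ are vertex-disjoint and both avoid $x_1$, whence $\lceil\frac{n}{2}\rceil+\lceil\frac{n}{2}\rceil\le n_{u_1}(u_1x_1)+n_{u_2}(u_2x_2)\le n-1$, impossible since $2\lceil\frac{n}{2}\rceil\ge n$. Thus any two centroidal vertices are adjacent, and three pairwise adjacent vertices would create a triangle in the tree, so there are at most two.

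For part (2), I would argue both implications from the identity $n_u(uv)+n_v(uv)=n$. If $T$ has two centroidal vertices $u,v$, then by part (1) $uv\in E(T)$, and centroidality gives $n_u(uv)\ge\frac{n}{2}$ and $n_v(uv)\ge\frac{n}{2}$; together with the identity these force $n_u(uv)=n_v(uv)=\frac{n}{2}$, so the two components of $T-uv$ have equal order. Conversely, if $uv$ is an edge whose deletion leaves two components of order $\frac{n}{2}$, I would check directly that $u$ is centroidal: the edge $uv$ gives $n_u(uv)=\frac{n}{2}$, while for any other incident edge $uw$ the branch $T_w(uw)$ lies strictly inside the $u$-side component, so $|T_w(uw)|\le\frac{n}{2}-1$ and hence $n_u(uw)>\frac{n}{2}$; by symmetry $v$ is centroidal as well, giving two centroidal vertices.

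The substantive work is the existence-and-adjacency half of part (1), which the surrounding discussion already supplies; the rest reduces to short inequality manipulations. The step demanding the most care is the bookkeeping in the path inequality: one must verify that $T_{u_1}(u_1x_1)$ and $T_{u_2}(u_2x_2)$ are genuinely vertex-disjoint and that at least one vertex (say $x_1$) lies outside both, so that their orders sum to at most $n-1$ rather than merely $n$. This loss of one vertex is precisely what converts $2\lceil\frac{n}{2}\rceil\ge n$ into the desired contradiction.
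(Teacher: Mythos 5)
Your proposal is correct, and two of its three components coincide with the paper's own argument (which here is the discussion immediately preceding the lemma rather than a displayed proof): the ``at most two, and adjacent'' claim is proved there by exactly your path inequality $n\le 2\lceil\tfrac{n}{2}\rceil\le |T_{u_1}(u_1x_1)|+|T_{u_2}(u_2x_2)|\le n-1$, and part (2) follows, as you say, from $n_u(uv)+n_v(uv)=n$ combined with the bounds $n_u(uv),n_v(uv)\ge\tfrac{n}{2}$; the paper leaves this, and in particular the converse direction, implicit, so your explicit check that an edge splitting $T$ into equal halves makes both endpoints centroidal is a worthwhile addition rather than a departure.

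Where you genuinely differ is existence, and here your ``reserve'' argument is the better one. The paper proves existence by induction: delete a pendent vertex $v$, take a centroidal vertex $u$ of $T-v$, and argue that $u$ survives in $T$. But its final case --- when the branch of $u$ that receives $v$ already has $\lfloor\tfrac{n}{2}\rfloor$ vertices --- is settled only by the instruction to ``replace $v$ with a pendent vertex of $T$ in $T_{v_2}(uv_2)$,'' and as written the asserted conclusion that $u$ itself is centroidal in $T$ fails in that case: there $|T_{v_1}(uv_1)|=\lfloor\tfrac{n}{2}\rfloor+1>\tfrac{n}{2}$ (the case forces $n$ odd), so the centroid actually shifts to $v_1$, and rerunning the induction on a different pendent vertex returns \emph{some} centroidal vertex of the smaller tree, not necessarily $u$. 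Your alternative --- choose $u$ minimizing $w(u)=\max_i|T_{v_i}(uv_i)|$ and show that a branch $C$ with $|C|>\tfrac{n}{2}$ forces $w(v)\le |C|-1<w(u)$ at the neighbor $v$ inside $C$ --- is Jordan's classical argument; it is uniform, requires no parity or case analysis, and is immune to the defect above. I would therefore promote it from reserve to primary: deferring existence to ``the inductive construction above'' would inherit that gap, whereas with the minimization argument your proof of the lemma is self-contained and fully rigorous.
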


For a tree $T$ on $n$ vertices, we say an edge $e=uv\in E(T)$ is \emph{center edge} if $\mu(e)=\min\{n_{u}(e),n_{v}(e)\}=\lfloor\frac{n}{2}\rfloor$.  Obviously,   $S_n$, the star on $n$ vertices, has no any center edge if $n\ge4$. For a center edge $e=uv$, without loss of generality, we assume that $n_{u}(e)\le n_{v}(e)$. Then $\lfloor\frac{n}{2}\rfloor =n_{u}(e)\le n_{v}(e)=\lceil\frac{n}{2}\rceil$ since $n_{u}(e)+n_{v}(e)=n$. To exactly, we have
\begin{equation}\label{center-eq-1}\left\{\begin{array}{ll}
n_{u}(e)=\frac{n}{2} = n_{v}(e),& \mbox{ if $n$ is even }\\
\lfloor\frac{n}{2}\rfloor =n_{u}(e)< n_{v}(e)=\frac{n+1}{2}=\lceil\frac{n}{2}\rceil,& \mbox{ if $n$ is odd. }\\
\end{array}\right.
\end{equation}
It implies the following simple result.
\begin{lem}\label{lem-3-0}
Let $T$ be a tree on $n$ vertices. Then the following statements hold:\\
(1) If $T$ has  center edge $e=uv$ and $n$ is even, then $n_{u}(e)= n_{v}(e)=\frac{n}{2}$ and  $uv$ is unique.\\
(2) If $T$ has  center edge $e=uv$ and $n$ is odd, then $\lfloor\frac{n}{2}\rfloor =n_{u}(e)< n_{v}(e)=\lceil\frac{n}{2}\rceil$. \\
(3) $T$ has two center edges $e_1$ and $e_2$ if and only if  they  are adjacent with common vertex $z=e_1\cap e_2$  such that  $ n_z(e_1)=n_z(e_2)=\lceil\frac{n}{2}\rceil
$.\\
(4) $T$ has at most two center edges.
\end{lem}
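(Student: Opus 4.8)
The plan is to read off parts (1) and (2) almost directly from the case analysis already recorded in (\ref{center-eq-1}), and then to reduce \emph{both} (3) and (4) to a single structural claim: when $n$ is odd, every center edge is incident to the unique centroidal vertex, with that vertex lying on the larger side.

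Part (2) requires nothing beyond the odd case of (\ref{center-eq-1}), which already asserts $\lfloor\frac{n}{2}\rfloor=n_u(e)<n_v(e)=\lceil\frac{n}{2}\rceil$. For part (1), the equalities $n_u(e)=n_v(e)=\frac{n}{2}$ are the even case of (\ref{center-eq-1}), so the only substantive point is uniqueness. Here I would argue through Lemma \ref{lem-2}: when $n$ is even a center edge $e=uv$ splits $T$ into two components of equal order $\frac{n}{2}$, so by Lemma \ref{lem-2}(2) both $u$ and $v$ are centroidal vertices. Since Lemma \ref{lem-2}(1) permits at most two centroidal vertices in all, a second, distinct center edge would introduce a third (or fourth) centroidal vertex, which is impossible; hence $uv$ is unique. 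Equivalently, a center edge for even $n$ is exactly an edge joining two centroidal vertices, and there is at most one such edge.

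For (3) and (4) I would first note that, by (1), two center edges can occur only when $n$ is odd, in which case $T$ has a \emph{unique} centroidal vertex $c$ (by Lemma \ref{lem-2}, since no edge splits an odd-order tree into equal halves). The key step is the claim: if $e=ab$ is a center edge with $n_a(e)=\lfloor\frac{n}{2}\rfloor$ and $n_b(e)=\lceil\frac{n}{2}\rceil$, then $c=b$. I would prove this by contradiction on the location of $c$. If $c\neq b$, let $w$ be the neighbor of $c$ on the path from $c$ to $b$; then the branch $T_w(cw)$ contains the whole larger side (when $c$ lies on the $a$-side) or else the whole smaller side together with $b$ (when $c$ lies on the $b$-side but $c\neq b$), so in either case $|T_w(cw)|\geq\lceil\frac{n}{2}\rceil>\frac{n}{2}$. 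This contradicts $c$ being centroidal, since centrality forces every branch at $c$ to have at most $\lfloor\frac{n}{2}\rfloor$ vertices. Hence $c=b$, so every center edge is incident to $c$ and satisfies $n_c(e)=\lceil\frac{n}{2}\rceil$.

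Granting this claim, (3) and (4) follow by bookkeeping. Forward implication of (3): two center edges $e_1,e_2$ are each incident to $c$ on their larger side, so they share $z=c$ with $n_z(e_1)=n_z(e_2)=\lceil\frac{n}{2}\rceil$. The converse is a one-line check: if $e_1=za_1$ and $e_2=za_2$ share $z$ with $n_z(e_i)=\lceil\frac{n}{2}\rceil$, then $n_{a_i}(e_i)=\lfloor\frac{n}{2}\rfloor$ and so $\mu(e_i)=\lfloor\frac{n}{2}\rfloor$, making each $e_i$ a center edge. For (4): all center edges are edges $ca_i$ with $|T_{a_i}(ca_i)|=\lfloor\frac{n}{2}\rfloor$, and since the branches at $c$ partition $V(T)\setminus\{c\}$ into $n-1$ vertices, three such edges would already account for $3\lfloor\frac{n}{2}\rfloor>n-1$ vertices, which is impossible; together with the even case from (1) this yields at most two center edges. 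The main obstacle is the structural claim in the third paragraph, namely that each center edge must meet the centroid on its larger side; once that is secured, the two positional subcases for $c$ and the parity bookkeeping are routine.
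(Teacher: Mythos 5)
Your proof is correct, but it takes a genuinely different route from the paper's. The paper proves Lemma \ref{lem-3-0} entirely from the size identity in Eq.\ (\ref{center-eq-1}) and never invokes centroidal vertices: (1) and (2) are read off directly (uniqueness in (1) being treated as immediate), the forward direction of (3) is obtained by supposing two non-adjacent center edges, using (1)--(2) to force $n$ odd, and then arguing that the two \emph{large-side} endpoints must coincide, and (4) is deduced from (3). You instead route everything through Lemma \ref{lem-2}: uniqueness in (1) follows because a center edge of an even-order tree makes both of its endpoints centroidal and there are at most two centroidal vertices in all; and for odd $n$ you prove the structural claim that every center edge is incident to the unique centroidal vertex $c$ on its larger side, from which the forward direction of (3), its converse (the same one-line check as in the paper), and the bound in (4) (three branches of size $\lfloor\frac{n}{2}\rfloor$ at $c$ would exceed the $n-1$ vertices of $V(T)\setminus\{c\}$) all follow as bookkeeping. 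Your key step is sound --- in both positional subcases the branch $T_w(cw)$ you exhibit contains at least $\lceil\frac{n}{2}\rceil>\frac{n}{2}$ vertices, contradicting centrality of $c$ --- and there is no circularity, since Lemma \ref{lem-2} precedes Lemma \ref{lem-3-0} and makes no reference to center edges. What your approach buys is a single unifying mechanism (the centroid) that handles (1), (3) and (4) simultaneously and in effect anticipates the centroid/center-edge relations that the paper only establishes afterwards in Lemma \ref{lem-4}; what the paper's approach buys is brevity and independence from the centroid machinery, keeping Lemma \ref{lem-3-0} a purely edge-division statement on which Lemma \ref{lem-4} can then build.
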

\begin{proof}
(1) and (2) follows immediately from Eq. (\ref{center-eq-1}).

First we prove (3). On the contrary suppose that $e_1=u_1u_2$ and $e_2=v_{1}v_2$ are  center edges not adjacent. From (1) and (2), we know that $n$ is odd and may assume that
$$\left\{\begin{array}{ll}
\lfloor\frac{n}{2}\rfloor =n_{u_1}(e_1)< n_{u_2}(e_1)=\lceil\frac{n}{2}\rceil,\\
\lfloor\frac{n}{2}\rfloor =n_{v_1}(e_2)< n_{v_2}(e_2)=\lceil\frac{n}{2}\rceil.\\
\end{array}\right.
$$
It implies that $u_2=v_2$. Thus $e_1$ and $e_2$ are adjacent with common vertex $u_2=v_2=z$, consequently $ n_z(e_1)=n_z(e_2)=\lceil\frac{n}{2}\rceil$. Conversely, suppose that  $e_1$ and $e_2$ have common vertex $z=e_1\cap e_2$  such that  $ n_z(e_1)=n_z(e_2)=\lceil\frac{n}{2}\rceil$. Let $e_1=uz$. We have $ n_u(e_1)=\lfloor\frac{n}{2}\rfloor$ and so $\mu_T(e_1)=\min\{n_u(e_1),n_z(e_1)\}=\lfloor\frac{n}{2}\rfloor$. Similarly, $e_2$ is also  a center edge with $\mu_T(e_2)=\lfloor\frac{n}{2}\rfloor$.

One can simply verify (4) from (3).
\end{proof}

The following lemma gives some relations between centroidal vertex and center edge.

\begin{lem}\label{lem-4}
Let $T$ be a tree on $n$ vertices. Then the following statements hold:\\
(1) If $T$ has two centroidal vertices $u$ and $v$, then $uv$ is the only center edge of $T$;\\
(2) If $T$ has  only one proper centroidal vertex if and only if $T$ has no center edge;\\
(3) If $T$ has only one centroidal vertex $u$ such that $n_{u}(e)=\lceil\frac{n}{2}\rceil$ for any edge $e$ incident to $u$ if and only if $d(u)=2$ and $T$ has two center edges incident to $u$;\\
(4) If $T$ has only one centroidal vertex $u$ such that $n_{u}(e)=\lceil\frac{n}{2}\rceil$ for only one edge $e$ incident to $u$, then $e$ is just  one center edge of $T$.
\end{lem}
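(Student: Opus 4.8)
The plan is to reduce every statement to an inequality on the branch sizes $|T_{v_i}(uv_i)|$ at the relevant centroidal vertex, using the equivalences recorded just before Lemma~\ref{lem-2}: a vertex $u$ is centroidal iff $|T_{v_i}(uv_i)|\le\frac{n}{2}$ for all neighbours $v_i$, and proper centroidal iff $|T_{v_i}(uv_i)|<\lfloor\frac{n}{2}\rfloor$ for all $i$; together with the fact that an edge $e=xy$ is a center edge iff $\mu(e)=\lfloor\frac{n}{2}\rfloor$. The single device that makes all four parts routine is a localization observation that I would establish first, since it controls all edges lying away from $u$ simultaneously: if $e=xy$ is any edge \emph{not} incident to $u$, then $x,y$ lie in one common component $T_{v_i}(uv_i)$ of $T-u$, and the component of $T-e$ not containing $u$ is a \emph{proper} subtree of that $T_{v_i}(uv_i)$; hence $\mu(e)$ is at most that component's size, which is strictly less than $|T_{v_i}(uv_i)|\le\lfloor\frac{n}{2}\rfloor$, so $e$ cannot be a center edge.

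For (1): by Lemma~\ref{lem-2}(2), two centroidal vertices $u,v$ force $uv\in E(T)$ with the two sides of $T-uv$ of equal order $\frac{n}{2}$; thus $n$ is even and $\mu(uv)=\frac{n}{2}=\lfloor\frac{n}{2}\rfloor$, so $uv$ is a center edge, and Lemma~\ref{lem-3-0}(1) yields its uniqueness. For (2): if $u$ is the unique centroidal vertex and is proper, then every edge at $u$ has $\mu<\lfloor\frac{n}{2}\rfloor$ by properness, while the localization observation kills every edge away from $u$, so $T$ has no center edge. Conversely, if $T$ has no center edge, then Lemma~\ref{lem-2}(1) and part (1) rule out two centroidal vertices, so there is a unique $u$; were $u$ not proper, some branch would satisfy $|T_{v_j}(uv_j)|=\lfloor\frac{n}{2}\rfloor$, making $uv_j$ a center edge, a contradiction.

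For (3): from $n_u(e)=\lceil\frac{n}{2}\rceil$ at every edge incident to the unique centroidal vertex $u$, each branch has $|T_{v_i}(uv_i)|=\lfloor\frac{n}{2}\rfloor$, so $d(u)\lfloor\frac{n}{2}\rfloor=n-1$. A parity discussion then forces $n$ odd and $d(u)=2$: if $n$ were even, a branch of size $\frac{n}{2}$ would make its root a second centroidal vertex by Lemma~\ref{lem-2}(2), contradicting uniqueness; with $n$ odd the count gives $d(u)=2$, and the two edges at $u$ are center edges. Conversely, $d(u)=2$ with two center edges at $u$ gives, via Lemma~\ref{lem-3-0}(3), $n_u=\lceil\frac{n}{2}\rceil$ on both incident edges, whence $n$ is odd, the two branches have size $\lfloor\frac{n}{2}\rfloor<\frac{n}{2}$, and so $u$ is the unique centroidal vertex.

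For (4): the distinguished edge $e=uv_1$ has $\mu(e)=\min\{\lceil\frac{n}{2}\rceil,\lfloor\frac{n}{2}\rfloor\}=\lfloor\frac{n}{2}\rfloor$, so it is a center edge; to get uniqueness I would show no other edge qualifies, namely the remaining edges at $u$ have $\mu=|T_{v_i}(uv_i)|<\lfloor\frac{n}{2}\rfloor$ by hypothesis, and every edge away from $u$ is dispatched by the localization observation. The only delicate point, and the step I expect to cost the most care, is precisely that localization argument in (2) and (4): one must check that the containment of the far component inside a branch is \emph{strict}, so that even the maximal branch $T_{v_1}(uv_1)$ of size $\lfloor\frac{n}{2}\rfloor$ cannot host a second center edge, and one must keep the bookkeeping among $\lfloor\frac{n}{2}\rfloor$, $\lceil\frac{n}{2}\rceil$ and $\frac{n}{2}$ consistent across the even and odd cases throughout.
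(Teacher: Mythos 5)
Your proposal is correct, and parts (1), (3), (4) together with the forward half of (2) follow essentially the paper's own route: your ``localization observation'' is exactly the paper's inline remark that $\mu(e)<|T_i|\le\lfloor\frac{n}{2}\rfloor$ for any edge $e$ inside a branch $T_i$ of a centroidal vertex, and your counting/parity derivation of $d(u)=2$ in (3) is a legitimate variant of the paper's terse claim that $|T_i|=\lfloor\frac{n}{2}\rfloor$ for all $i$ forces $r=2$.

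Where you genuinely diverge is the converse of (2), which is the longest and most delicate part of the paper's proof. The paper, assuming no center edge exists, constructs a proper centroidal vertex from scratch: it partitions each neighborhood as $N_T(v)=N^-(v)\cup N^+(v)$ according to whether $n_x(xv)<\lfloor\frac{n}{2}\rfloor$, picks $v^*$ minimizing $|N^+(v^*)|$, rules out $|N^+(v^*)|\ge 2$ by a vertex count, and rules out $|N^+(v^*)|=1$ by exhibiting a center edge --- in effect re-proving existence of a (proper) centroidal vertex under the hypothesis. You instead reuse machinery already available: Lemma \ref{lem-2}(1) gives a centroidal vertex, part (1) of the present lemma (proved independently, so there is no circularity) excludes a second one since that would create a center edge, and then non-properness of the unique centroidal vertex $u$ would force some branch to have size exactly $\lfloor\frac{n}{2}\rfloor$ (squeezed between $\lfloor\frac{n}{2}\rfloor$ and $n/2$), making $uv_j$ a center edge --- contradiction. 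Your route is shorter and makes the logical structure transparent (no center edge $\Longleftrightarrow$ the unique centroidal vertex is proper); what the paper's heavier argument buys is self-containment, since it never leans on part (1) or on the earlier existence result. Both are sound; your squeeze argument at the maximal branch is airtight across the parities precisely because a centroidal branch size is an integer bounded by $n/2$, hence by $\lfloor\frac{n}{2}\rfloor$.
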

\begin{proof}
Suppose that $u$ and $v$ are two centroidal vertices of $T$. Then  $uv$ is an edge of $T$ and the two components of $T-e$ have the same order by Lemma \ref{lem-2} (2). It follows that $uv$ is a center edge and $n$ is even.  By Lemma \ref{lem-3-0} (1), the center edge $uv$ is unique and (1) follows.

In the following proofs of (2), (3) and (4), we always assume that the centroidal vertex $u$ has neighbors $v_1,\ldots,v_r$ and $T_i$ is the component of $T-u$ containing $v_i$ for $i=1,\ldots,r$.

Now we show (2).  Suppose that $T$ has  one proper centroidal vertex $u$. Then $n_{u}(uv_i)>\lceil\frac{n}{2}\rceil$, and so $uv_i$ is not center edge since $\mu(uv_i)=\min\{n_{u}(uv_i),n_{v_i}(uv_i)\}<\lfloor\frac{n}{2}\rfloor$. For the edge $e\in E(T_i)$, we have $\mu(e)<|T_i|=n_{v_i}(uv_i)<\lfloor\frac{n}{2}\rfloor$. Thus, $T$ has no center edge.
To show the converse, we assume that $\mu(e)<\lfloor\frac{n}{2}\rfloor$ for all the edge $e\in E(T)$.  For $v\in V(T)$, $N_T(v)$ can be partitioned as two parts $N_T(v)=N^-(v)\cup N^+(v)$ such that $N^-(v)=\{x\in N_T(v)\mid n_x(xv)< \lfloor\frac{n}{2}\rfloor\}$ and $N^+(v)=\{y\in N_T(v)\mid n_y(yv)\ge \lfloor\frac{n}{2}\rfloor\}$. Now let $v^*$ be a vertex such that $|N^+(v^*)|$ is as small as possible. In what follows we show that $u=v^*$ is a proper centroidal vertex. It is clear that $v^*$ is a centroidal vertex if  $|N^+(v^*)|=0$. On the contrary, assume that $|N^+(v^*)|\ge 1$. We claim that $|N^+(v^*)|=1$. Since otherwise, there are two vertices $y_1,y_2\in N^+(v^*)$. We have $n_{y_1}(y_1v^*), n_{y_2}(y_2v^*)\ge \lfloor\frac{n}{2}\rfloor$, and furthermore $n_{y_1}(y_1v^*), n_{y_2}(y_2v^*)>\lceil\frac{n}{2}\rceil$ since there is no any center edge. Thus $n\ge n_{y_1}(y_1v^*)+n_{y_2}(y_2v^*)+1> \lceil\frac{n}{2}\rceil+\lceil\frac{n}{2}\rceil+1\geq n+1$, a contradiction. Now we can further assume that $y_1$ is the unique vertex in $ N^+(v^*)$ and $v^*$ is the unique vertex in $N^+(y_1)$ since $T$ is finite and acyclic. Thus $n_{y_1}(v^*y_1)\ge \lfloor\frac{n}{2}\rfloor$ and $n_{v^*}(y_1 v^*)\ge \lfloor\frac{n}{2}\rfloor$. It implies that $\{n_{y_1}(v^*y_1),n_{v^*}(y_1 v^*)\}=\{\lfloor\frac{n}{2}\rfloor,\lceil\frac{n}{2}\rceil\}$, and so $v^*y_1$ is a center edge of $T$. It is a contradiction.

Next we show (3). Suppose that $T$ has only one centroidal vertex $u$ such that $n_{u}(e)=\lceil\frac{n}{2}\rceil$ for any edge $e$ incident to $u$. We have   $|T_i|=\lfloor\frac{n}{2}\rfloor$ for $i=1,2,\ldots,r$. It  implies that  $r=2$ and $d(u)=2$. Therefore, $\mu(uv_i)=|T_i|=\lfloor\frac{n}{2}\rfloor$, and  $uv_1$ and $uv_2$ are the two center edges.
To show the converse, let $e_1=uv_1$ and $e_2=uv_2$ be two center edges.  We have $n_u(e_1)=n_u(e_2)=\lceil\frac{n}{2}\rceil$ by Lemma \ref{lem-3-0} (3). Thus the vertex $u$ is the unique  centroidal vertex.

At last we show (4). Without loss of generality, we say $n_{u}(uv_1)=\lceil\frac{n}{2}\rceil$ and $n_{u}(uv_i)>\lceil\frac{n}{2}\rceil$ for $2\leq i\leq r$. We have $|T_1|=\lfloor\frac{n}{2}\rfloor$ and $|T_i|<\lfloor\frac{n}{2}\rfloor$ for $2\leq i\leq r$. Since  $\mu(uv_1)=|T_1|=\lfloor\frac{n}{2}\rfloor$, $uv_1$ is a center edge. In addition,   $\mu(uv_i)=|T_i|<\lfloor\frac{n}{2}\rfloor$ for $i=2,\ldots,r$, and  $\mu(e)<|T_i|\leq\lfloor\frac{n}{2}\rfloor$ for any edge $e\in E(T_i)$ where $i=1,\ldots,r$. Therefore, any edge other than $e=uv_1$ is not center edge of $T$.

We complete this proof.
\end{proof}

\section{ A criterion to determine the  order of trees }

In this section, we will give a criterion to determine the  order  in $\langle\mathcal{T}_n, \preceq\rangle$.

For $T,T'\in \mathcal{T}_n$, let $\varphi:E(T)\longrightarrow E(T')$ be a bijection. $T$ and $T'$ are said to be $(\varphi,\mu)$-similar with respect to $e_1\in E(T)$ if   $\mu_{T}(e)=\mu_{T'}(\varphi(e))$ for any $e\not=e_1$.

\begin{lem}\label{lem-5}
Suppose that  $T, T'\in \mathcal{T}_n$  are $(\varphi,\mu)$-similar with respect to $e_1$, and $\varphi(e_1)=e_1'$. We have\\
(1) If $\mu_{T}(e_1)<\mu_{T'}(e_1')$, then $T\prec T'$; \\
(2) If $\mu_{T}(e_1)>\mu_{T'}(e_1')$, then $T\succ T'$;\\
(3) If $\mu_{T}(e_1)=\mu_{T'}(e_1')$, then $T\approx T'$.
\end{lem}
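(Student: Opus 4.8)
The plan is to translate the combinatorial hypothesis directly into the partial-sum inequalities that define the order $\preceq$. Recall that $\mathbf{r}(T)=(r_1,\ldots,r_{\lfloor n/2\rfloor})$ records, for each value $i$, how many edges $e$ satisfy $\mu_T(e)=i$. The bijection $\varphi$ identifies all edges of $T$ with those of $T'$ while preserving the $\mu$-value on every edge except the single edge $e_1$. So the only difference between the multisets $\{\mu_T(e)\mid e\in E(T)\}$ and $\{\mu_{T'}(e')\mid e'\in E(T')\}$ is that the former contains one occurrence of $a:=\mu_T(e_1)$ where the latter contains one occurrence of $b:=\mu_{T'}(e_1')$. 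First I would make this precise: writing $r_i=r_i(T)$ and $r_i'=r_i(T')$, the $(\varphi,\mu)$-similarity yields $r_i=r_i'$ for every $i\notin\{a,b\}$, together with $r_a=r_a'+1$ and $r_b'=r_b+1$ (in case $a=b$ these collapse to $r_i=r_i'$ for all $i$, giving part (3) immediately).

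The core of the argument is then to compare the tail sums $R_k:=\sum_{i=k}^{\lfloor n/2\rfloor}r_i$ and $R_k':=\sum_{i=k}^{\lfloor n/2\rfloor}r_i'$ for each $k$. Because the vectors differ only in coordinates $a$ and $b$, the difference $R_k'-R_k$ telescopes to a simple indicator expression: it equals $[k\le b]-[k\le a]$, where $[\cdot]$ is the Iverson bracket. For part (1), where $a<b$, this difference is $0$ when $k\le a$ or $k>b$, and equals $1$ exactly on the range $a<k\le b$; hence $R_k'\ge R_k$ for all $k$ with strict inequality for at least one $k$ (namely $k=b$), which is precisely the definition of $\mathbf{r}\prec\mathbf{r}'$, i.e. $T\prec T'$. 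Part (2) is the symmetric statement with the roles of $a$ and $b$ exchanged, so it follows by applying part (1) to the pair $(T',T)$ and $\varphi^{-1}$, or simply by reading the same indicator computation with $b<a$.

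I expect the only genuine subtlety to be the bookkeeping in the preceding paragraph, namely verifying that a bijection preserving $\mu$ off a single edge forces $r_i=r_i'$ away from $a$ and $b$ and shifts exactly one unit of mass from $b$ to $a$. This is essentially counting, but one must be careful to cover the degenerate cases cleanly: when $a=b$ the two vectors coincide and we land in case (3); when $a\ne b$ but one of $r_a,r_b$ was already zero the formulas still hold since $\varphi$ is a genuine bijection on edge sets of equal cardinality. No real obstacle remains once the telescoping identity $R_k'-R_k=[k\le b]-[k\le a]$ is established, since each conclusion is then read off directly from the sign pattern of that indicator. I would present the proof by first fixing the relation between the coordinates of $\mathbf{r}$ and $\mathbf{r}'$, then exhibiting the tail-sum difference, and finally invoking the definitions of $\prec$, $\succ$ and $\approx$ to conclude the three cases.
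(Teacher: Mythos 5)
Your proposal is correct and takes essentially the same approach as the paper: identify that the bijection forces $r_i=r_i'$ for $i\notin\{a,b\}$ with one unit of mass shifted between coordinates $a$ and $b$, then compare the tail sums $\sum_{i=k}^{\lfloor n/2\rfloor}r_i$, which differ by exactly $1$ precisely on the range $a<k\le b$. The paper writes this tail-sum comparison as an explicit three-case split on the index $l$ rather than your Iverson-bracket identity, but the argument is the same, including handling case (2) by symmetry and case (3) as immediate.
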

\begin{proof}
Let $\mathbf{r}=(r_1,\ldots,r_{\lfloor\frac{n}{2}\rfloor})$ and $\mathbf{r}'=(r_1',\ldots,r_{\lfloor\frac{n}{2}\rfloor}')$ be the edge division vectors of $T$ and $T'$, respectively.  By assumption, there exists a bijection $\varphi:E(T)\longrightarrow E(T')$ such that $\mu_{T}(e)=\mu_{T'}(\varphi(e))$ for any $e\not=e_1$.

Now we prove (1). Without loss of generality, assume that $s=\mu_{T }(e_1)<\mu_{T'}(e_1')=t$. First, for  $i\not=s,t$ we have
$$\begin{array}{ll}r_i=|\{e\in E(T)\mid\mu_{T}(e)=i\}|&=|\{e\in E(T\backslash e_1)\mid\mu_{T}(e)=i\}|\\
&=|\{\varphi(e)\in E(T'\backslash e_1')\mid\mu_{T'}(\varphi(e))=i\}|\\
&=|\{e'\in E(T')\mid\mu_{T'}(e')=i\}|=r_i'.
\end{array}
$$
Next, for $i=s$ we have
\[\begin{split}
r_s&=|\{e\in E(T)\mid\mu_{T}(e)=s\}|\\
&=|\{e\in E(T)\backslash e_1\mid\mu_{T}(e)=s\}|+1\\
&=|\{\varphi(e)\in E(T')\backslash e_1'\mid\mu_{T'}(\varphi(e))=s\}|+1\\
&=|\{e'\in E(T')\mid\mu_{T'}(e')=s\}|+1\\
&=r_s'+1.
\end{split}\]
At last, for $i=t$ we can similarly get $r_t'=r_t+1$ by exchanging the positions of $s$ and $t$. It follows that
$$\left\{\begin{array}{ll}
\sum_{i=l}^{\lfloor\frac{n}{2}\rfloor}r_{i}=\sum_{i=l}^{\lfloor\frac{n}{2}\rfloor}r_{i}'& \mbox{ for $1 \leq l \leq s$, }\\
\sum_{i=l}^{\lfloor\frac{n}{2}\rfloor}r_{i}<\sum_{i=l}^{\lfloor\frac{n}{2}\rfloor}r_{i}' & \mbox{ for $s+1 \leq l \leq t$, }\\
\sum_{i=l}^{\lfloor\frac{n}{2}\rfloor}r_{i}=\sum_{i=l}^{\lfloor\frac{n}{2}\rfloor}r_{i}'&
\mbox{ for $t+1 \leq l \leq \lfloor\frac{n}{2}\rfloor$.}
\end{array}\right.
$$
Therefore, we have $\mathbf{r}\prec \mathbf{r}'$, and so  $T\prec T'$ by definition. Thus (1) follows.

To prove (2), without loss of generality, assume that $t=\mu_{T }(e_1)>\mu_{T'}(e_1')=s$.   By   exchanging the positions of $T$ and $T'$, and  similarly as the proof of  (1) one can verify that $r_i= r'_i$ for $i\not=s,t$,  $r'_s=r_s+1$ and $r_t= r'_t+1$, which leads to $T\succ T'$, and so (2) follows.

According to definitions, (3) is obvious.

We complete this proof.
\end{proof}
\begin{remark}\label{rem-1}
Obviously, if $\mathbf{r}(T)\not=\mathbf{r}(T')$ then $T\not=T'$. Conversely, when $\mathbf{r}(T)=\mathbf{r}(T')$, however the following example shows that $T$ does not necessarily equal $T'$. It implies that  the  order ``$\preceq$" defined on $\mathcal{T}_n$ by the edge division vector is not a partial order since the antisymmetry of this order does not hold. This gives us a method to characterize graphs they are not isomorphic, but with the same index values (we can refer to Section 8).
\end{remark}

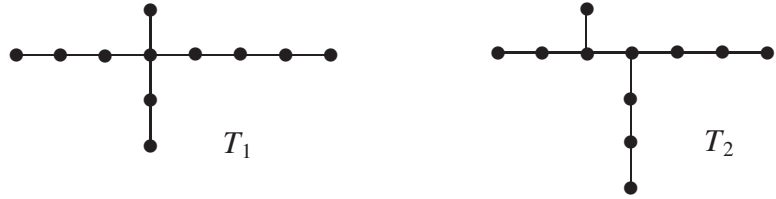
\begin{figure}[H]
\unitlength 1.5mm 
\linethickness{0.4pt}
\ifx\plotpoint\undefined\newsavebox{\plotpoint}\fi 
\begin{picture}(67.978,17.464)(0,0)
\put(.56,12.614){\line(1,0){27.96}}
\put(.665,12.614){\circle*{1.2}}
\put(4.554,12.614){\circle*{1.2}}
\put(8.548,12.509){\circle*{1.2}}
\put(12.542,12.614){\circle*{1.2}}
\put(16.537,12.719){\circle*{1.2}}
\put(20.531,12.719){\circle*{1.2}}
\put(24.525,12.614){\circle*{1.2}}
\put(28.519,12.614){\circle*{1.2}}
\put(12.542,16.608){\line(0,-1){11.878}}
\put(12.542,16.608){\circle*{1.2}}
\put(12.542,8.62){\circle*{1.2}}
\put(12.542,4.52){\circle*{1.2}}
\put(43.375,12.786){\circle*{1.2}}
\put(47.264,12.786){\circle*{1.2}}
\put(51.258,12.678){\circle*{1.2}}
\put(55.252,12.786){\circle*{1.2}}
\put(59.247,12.888){\circle*{1.2}}
\put(63.241,12.888){\circle*{1.2}}
\put(67.235,12.786){\circle*{1.2}}
\put(51.119,16.713){\line(0,-1){3.889}}
\put(51.231,16.721){\circle*{1.2}}
\put(55.139,12.897){\line(0,-1){11.878}}
\put(55.139,4.907){\circle*{1.2}}
\put(55.139,.8){\circle*{1.2}}
\put(55.113,8.725){\circle*{1.2}}
\put(43.13,12.824){\line(1,0){24.281}}
\put(18.954,3.89){$T_1$}
\put(61.63,4.1){$T_2$}
\end{picture}
  \caption{The graphs $T_1$ and $T_2$ with the same edge division vector $\mathbf{r}=(4,3,2,1,0)$ but $T_1\neq T_2$}\label{fig-1}
\end{figure}

For  $T\in \mathcal{T}_n$, let  $P_k=v_1\cdots v_k$ be a path of $T$ and $U(v_i)$ be the neighbors of $v_i$ not belong to $P_k$, where $1\le i\le k$. For $x_i\in U(v_i)$, let $T_{x_i}(v_ix_i)$ be the component of $T-v_ix_i$ containing $x_i$, where  $x_i$ is defined to be the root vertex of the branch $T_{x_i}(v_ix_i)$,  and $T_{v_i}=\cup_{x_i\in U(v_i)}T_{x_i}(v_ix_i)$ is a union of   all such  branches.  Let  $n_i=|T_{v_i}|=\sum_{x_i\in U(v_i)}|T_{x_i}(v_ix_i)|$.  Obviously, $n=k+\sum_{1\le i\le k}n_i$.  Such a tree $T$ is denoted by $T(n;n_1,\ldots,n_{k})$ and called the tree with respect to path  $P_{k}=v_1\cdots v_k$. All such graphs are collected in $\mathcal{T}_{n;n_1,\ldots,n_{k}}$.

Let $T=T(n;n_1,\ldots,n_{k})\in \mathcal{T}_{n;n_1,\ldots,n_{k}}$, where $n_t> 0$ and $1\le t<k$. We call $T'=T(n;n_1,\ldots,n_{t-1},0,n_{t+1}+n_t,\ldots,n_k)$ (see Figure \ref{fig-2}) be the branch-shift  of $T$ from $v_t$ to $v_{t+1}$, that can be viewed as a graph transformation from $T$ by shifting branch $T_{v_t}$ to $v_{t+1}$ (equivalently,  deleting edges $v_tx_i$ and adding  $v_{t+1}x_i$ for any $x_i\in U(v_t)$).

\begin{figure}[H]
\unitlength 0.8mm 
\linethickness{0.4pt}
\ifx\plotpoint\undefined\newsavebox{\plotpoint}\fi 
\begin{picture}(160.004,43.96)(0,0)
\put(20.545,25){\line(1,0){30}}
\put(5.544,25){\circle*{2}}
\put(20.545,25){\circle*{2}}
\put(35.544,25){\circle*{2}}
\put(50.544,25){\circle*{2}}
\put(65.544,25){\circle*{2}}
\put(10.544,25){\ldots}
\put(55.544,25){\ldots}
\put(8.002,0){\small$T=T(n;n_1,\ldots,n_t,\ldots,n_k)$}
\put(3.545,28){$v_{1}$}
\put(18.545,28){$v_{t-1}$}
\put(33.544,28){$v_{t}$}
\put(48.544,28){$v_{t+1}$}
\put(63.544,28){$v_{k}$}
\put(86.001,0){\small$T'=T(n;n_1,\ldots,n_{t-1},0,n_{t+1}+n_t,\ldots,n_k)$}
\put(109.668,24.625){\line(1,0){30}}
\put(94.668,24.625){\circle*{2}}
\put(109.668,24.625){\circle*{2}}
\put(124.669,24.625){\circle*{2}}
\put(139.668,24.625){\circle*{2}}
\put(154.668,24.625){\circle*{2}}
\put(99.668,24.625){\ldots}
\put(144.668,24.625){\ldots}
\put(92.668,27.625){$v_{1}$}
\put(107.668,27.625){$v_{t-1}$}
\put(122.669,27.625){$v_{t}$}
\put(139,27.625){$v_{t+1}$}
\put(152.668,27.625){$v_{k}$}
\put(3,8){\footnotesize$T_{v_1}$}
\put(16,8){\footnotesize$T_{v_{t-1}}$}
\put(33,8){\footnotesize$T_{v_t}$}
\put(46,8){\footnotesize$T_{v_{t+1}}$}
\put(63,8){\footnotesize$T_{v_k}$}
\put(137,39){\footnotesize$T_{v_t}$}
\put(74.794,24.375){\vector(1,0){7.625}}
\put(5.481,11.181){\circle{10.961}}
\put(20.482,11.168){\circle{10.961}}
\put(35.481,11.23){\circle{10.961}}
\put(50.544,11.355){\circle{10.961}}
\put(65.545,11.23){\circle{10.961}}
\put(92.001,8){\footnotesize$T_{v_1}$}
\put(105.001,8){\footnotesize$T_{v_{t-1}}$}
\put(135,8){\footnotesize$T_{v_{t+1}}$}
\put(152,8){\footnotesize$T_{v_k}$}
\put(94.46,10.876){\circle{10.961}}
\put(109.46,10.862){\circle{10.961}}
\put(154.523,10.926){\circle{10.961}}
\put(139.483,11.23){\circle{10.961}}
\put(139.483,38.48){\circle{10.961}}
\put(2.8,11.3){\small$\cdots$}
\put(17.7,11.3){\small$\cdots$}
\put(32.5,11.3){\small$\cdots$}
\put(47.8,11.3){\small$\cdots$}
\put(62.7,11.1){\small$\cdots$}
\put(92.001,11.1){\small$\cdots$}
\put(107.001,11.1){\small$\cdots$}
\put(137,11.1){\small$\cdots$}
\put(152,11.3){\small$\cdots$}
\put(136.8,36.2){\small$\cdots$}
\multiput(5.501,25.125)(-.033482143,-.112723214){112}{\line(0,-1){.112723214}}
\multiput(5.501,24.875)(.033415842,-.126237624){101}{\line(0,-1){.126237624}}
\put(8.701,12.5){\circle*{1}}
\put(1.75,12.5){\circle*{1}}
\multiput(20.501,25.25)(-.033482143,-.112723214){112}{\line(0,-1){.112723214}}
\multiput(20.501,25)(.033415842,-.126237624){101}{\line(0,-1){.126237624}}
\put(23.8,12.5){\circle*{1}}
\put(16.9,12.5){\circle*{1}}
\multiput(35.376,25.375)(-.033482143,-.112723214){112}{\line(0,-1){.112723214}}
\multiput(35.376,25.125)(.033415842,-.126237624){101}{\line(0,-1){.126237624}}
\put(38.576,12.5){\circle*{1}}
\put(31.626,12.5){\circle*{1}}
\multiput(50.501,25.375)(-.033482143,-.112723214){112}{\line(0,-1){.112723214}}
\multiput(50.501,25.125)(.033415842,-.126237624){101}{\line(0,-1){.126237624}}
\put(53.701,12.5){\circle*{1}}
\put(46.751,12.5){\circle*{1}}
\multiput(65.376,25.5)(-.033482143,-.112723214){112}{\line(0,-1){.112723214}}
\multiput(65.376,25.25)(.033415842,-.126237624){101}{\line(0,-1){.126237624}}
\put(68.576,12.5){\circle*{1}}
\put(61.626,12.5){\circle*{1}}
\multiput(94.626,24.875)(-.033482143,-.112723214){112}{\line(0,-1){.112723214}}
\multiput(94.626,24.625)(.033415842,-.126237624){101}{\line(0,-1){.126237624}}
\put(97.826,12.3){\circle*{1}}
\put(90.876,12.3){\circle*{1}}
\multiput(109.751,24.75)(-.033482143,-.112723214){112}{\line(0,-1){.112723214}}
\multiput(109.751,24.5)(.033415842,-.126237624){101}{\line(0,-1){.126237624}}
\put(112.951,12.3){\circle*{1}}
\put(106.001,12.3){\circle*{1}}
\multiput(139.751,24.75)(-.033482143,-.112723214){112}{\line(0,-1){.112723214}}
\multiput(139.751,24.5)(.033415842,-.126237624){101}{\line(0,-1){.126237624}}
\put(142.951,12.3){\circle*{1}}
\put(136.001,12.3){\circle*{1}}
\multiput(154.626,24.875)(-.033482143,-.112723214){112}{\line(0,-1){.112723214}}
\multiput(154.626,24.625)(.033415842,-.126237624){101}{\line(0,-1){.126237624}}
\put(157.826,12.3){\circle*{1}}
\put(150.876,12.3){\circle*{1}}
\multiput(139.626,24.375)(-.033536585,.109756098){123}{\line(0,1){.109756098}}
\multiput(139.626,24.625)(.033505155,.136597938){97}{\line(0,1){.136597938}}
\put(135.626,37.5){\circle*{1}}
\put(143,37.5){\circle*{1}}
\end{picture}
  \caption{The branch-shift transformation}\label{fig-2}
\end{figure}
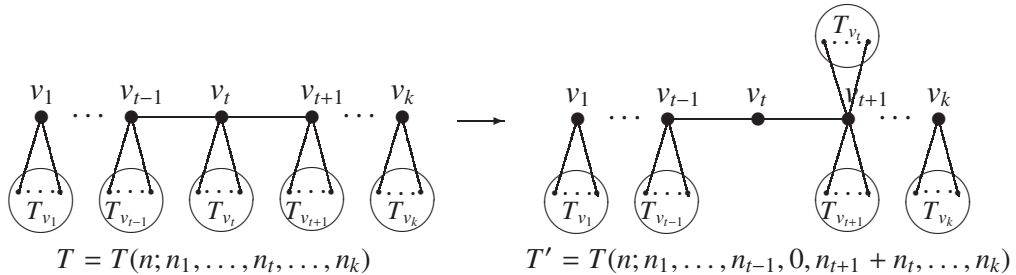

In the following  sections, we will characterize the extremal graphs with respect to the order ``$\preceq$" on some  prescribed families of trees.
For $T=T(n;n_1,\ldots,n_{k})\in \mathcal{T}_{n;n_1,\ldots,n_{k}}$ with respect to  $P_k=v_1\cdots v_k$, we know that  $T_{v_i}$ is a union of the branches $T_{x_i}(v_ix_i)$ for $x_i\in U(v_i)$. If each $T_{v_i}$ is an independent set then $T$ is so called the caterpillar tree which is written as  $CP(n;n_1,\ldots,n_{k})$ instead of $T(n;n_1,\ldots,n_{k})$. It is clear that  $\mathcal{T}_{n;n_1,\ldots,n_{k}}$ contains  a unique caterpillar tree $CP(n;n_1,\ldots,n_{k})$. Given $n>k$, all the caterpillar trees of the form $CP(n;n_1,\ldots,n_{k})$ satisfying  $n=n_1+n_2+\cdots+n_k+k$ are collected in $\mathcal{C}(n,k)$.  In the next section, we will determine the extremal  graphs in  $\langle\mathcal{C}(n,k),\preceq\rangle$.

\section{Extremal  graphs in the order set $\langle\mathcal{C}(n,k),\preceq\rangle$}

The \emph{caterpillar tree} $T=CP(n;n_1,\ldots,n_{k})$  with respect to a path $P_{k}=v_1v_2\cdots v_k$ is a tree obtained from $P_{k}$ by attaching $n_i$ pendent vertices at $v_i$ for $i=1,2,\ldots,k$, where $n=\sum_{i=1}^{k}n_i+k$. In this section, we always assume that $1\le n_1\leq n_k $ by symmetry. Thus, if $k=n$ then $T=P_n$; if $k=1$ then $T=S_n$. Since $P_n$ and $S_n$ are unique, we may assume that $2\le k\le n-1$ in what follows.

The caterpillar tree $T=CP(n;n_1,\underbrace{0,\ldots,0}_{k-2},n_k)$  is called  the \emph{double star path} and shortly for $CP_{n;n_1,n_k}$. We call
$T'=CP_{n;n_1+1,n_k-1}$ be the edge-shift of $T=CP_{n;n_1,n_k}$, that can be viewed as a graph transformation from $T$ by shifting  edge  $v_{k}x$ to $v_{1}x$ (see Figure \ref{fig-3}). First we give a lemma that will be used to determine the extremal  graphs in $\langle\mathcal{C}(n,k),\preceq\rangle$.

\begin{figure}[H]
\unitlength 1mm 
\linethickness{0.4pt}
\ifx\plotpoint\undefined\newsavebox{\plotpoint}\fi 
\begin{picture}(134.462,17.256)(0,0)
\put(15.057,12.25){\circle*{2}}
\put(25.057,12.25){\circle*{2}}
\put(4.157,10.25){$\vdots$}
\put(24.057,8.75){$v_{2}$}
\put(21.432,0){$T=CP_{n;n_1,n_k}$}
\put(36.115,12.318){\circle*{2}}
\put(46.115,12.318){\circle*{2}}
\put(54.415,10){$\vdots$}
\put(35.115,8.818){$v_{k-1}$}
\put(45.115,8.818){$v_{k}$}
\put(15.095,12.136){\line(1,0){9.811}}
\put(35.906,12.136){\line(1,0){10.406}}
\put(28.257,10.987){$\cdots$}
\put(91.815,12.169){\circle*{2}}
\put(101.815,12.169){\circle*{2}}
\put(80.315,10.169){$\vdots$}
\put(100.815,8.669){$v_{2}$}
\put(112.873,12.237){\circle*{2}}
\put(122.873,12.237){\circle*{2}}
\put(131.473,10.237){$\vdots$}
\put(111.873,8.737){$v_{k-1}$}
\put(121.873,8.737){$v_{k}$}
\put(91.852,12.055){\line(1,0){9.811}}
\put(112.664,12.055){\line(1,0){10.406}}
\put(104.815,11.206){$\cdots$}
\put(97.258,.69){$T'=CP_{n;n_1+1,n_k-1}$}
\put(-3,10.636){\small$n_1$}
\put(0,10.436){$\Bigg \{$}
\multiput(15.095,12.041)(-.103472678,.033519318){102}{\line(-1,0){.103472678}}
\multiput(15.244,12.041)(-.071270974,-.033599174){146}{\line(-1,0){.071270974}}
\multiput(46.163,12.041)(.082739646,.033656805){106}{\line(1,0){.082739646}}
\multiput(45.717,12.338)(.064309959,-.033736372){141}{\line(1,0){.064309959}}
\multiput(91.947,12.338)(-.11508456,.03356633){93}{\line(-1,0){.11508456}}
\multiput(91.799,12.189)(-.069234661,-.033599174){146}{\line(-1,0){.069234661}}
\multiput(122.867,12.189)(.073126647,.03356633){124}{\line(1,0){.073126647}}
\multiput(123.015,12.041)(.057478344,-.033694202){150}{\line(1,0){.057478344}}
\put(4.689,15.608){\circle*{2}}
\put(4.689,6.987){\circle*{2}}
\put(54.933,15.162){\circle*{2}}
\put(54.785,7.284){\circle*{2}}
\put(80.799,15.608){\circle*{2}}
\put(131.934,16.352){\circle*{2}}
\put(131.637,7.284){\circle*{2}}
\put(80.799,7.284){\circle*{2}}
\put(91.15,8.7){$v_{1}$}
\put(14.649,8.7){$v_{1}$}
\put(56.798,9.811){\Bigg \}}
\put(59.798,9.96){\small$n_k$}
\put(63,11){\vector(1,0){4}}
\put(67.7,9.96){\small$n_1+1$}
\put(77.253,10.2){\Bigg \{}
\put(135.5,9.9){\small$n_k-1$}
\put(132.962,10.2){\Bigg\}}
\end{picture}
 \caption{The edge-shift transformation}\label{fig-3}
\end{figure}
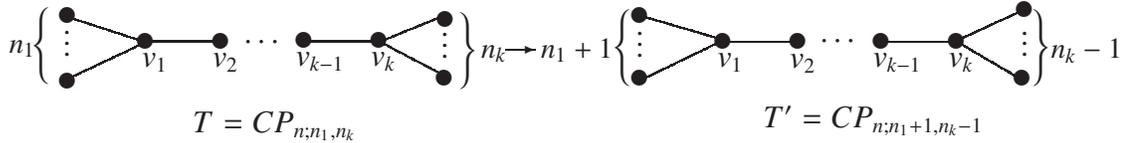

\begin{lem}\label{lem-7}
If $n_1+2\leq n_k$, then  $T=CP_{n;n_1,n_k}\prec T'=CP_{n;n_1+1,n_k-1}$.
\end{lem}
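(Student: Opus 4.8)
The plan is to apply the criterion of Lemma \ref{lem-5}: I will exhibit a bijection $\varphi\colon E(T)\to E(T')$ under which $T$ and $T'$ are $(\varphi,\mu)$-similar with respect to a single edge $e_1$, and on that edge show $\mu_T(e_1)<\mu_{T'}(\varphi(e_1))$, whence $T\prec T'$ by part (1) of that lemma.

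First I would record the $\mu$-values, writing $n=n_1+n_k+k$. Every pendent edge carries $\mu=1$, and both $T$ and $T'$ have exactly $n_1+n_k$ of them, so these can be matched to one another freely. For the path edge $v_iv_{i+1}$, removal splits $T$ into a side of size $i+n_1$ (containing $v_1$) and a side of size $(k-i)+n_k$, so $\mu_T(v_iv_{i+1})=\min\{i+n_1,\,(k-i)+n_k\}$; in $T'$ the same edge gives $\mu_{T'}(v_iv_{i+1})=\min\{i+n_1+1,\,(k-i)+n_k-1\}$.

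The crucial observation, and the computational heart of the argument, is the shift identity $\mu_{T'}(v_iv_{i+1})=\mu_T(v_{i+1}v_{i+2})$ for $1\le i\le k-2$, which follows at once from $i+n_1+1=(i+1)+n_1$ and $(k-i)+n_k-1=(k-(i+1))+n_k$. Hence I let $\varphi$ send each path edge $v_{i+1}v_{i+2}$ of $T$ (for $1\le i\le k-2$, i.e. the edges $v_2v_3,\ldots,v_{k-1}v_k$) to $v_iv_{i+1}$ of $T'$, send the pendent edges to pendent edges, and am then forced to set $e_1=v_1v_2$ with $\varphi(e_1)=v_{k-1}v_k$. This $\varphi$ is a bijection that preserves $\mu$ on every edge except $e_1$, so $T$ and $T'$ are $(\varphi,\mu)$-similar with respect to $e_1$.

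It remains to compare $\mu_T(v_1v_2)=\min\{1+n_1,\,(k-1)+n_k\}=n_1+1$ (using $n_1\le n_k$ and $k\ge 2$) with $\mu_{T'}(v_{k-1}v_k)=\min\{k+n_1,\,n_k\}$. Now $n_1+1<k+n_1$ is just $k>1$, while $n_1+1<n_k$ is exactly the hypothesis $n_1+2\le n_k$; together they give $\mu_T(e_1)<\mu_{T'}(\varphi(e_1))$, so Lemma \ref{lem-5}(1) yields $T\prec T'$. The only delicate point is the bookkeeping: checking that the shift identity leaves precisely one edge on each side unmatched ($v_1v_2$ in $T$ and $v_{k-1}v_k$ in $T'$) and that this unmatched pair is where the strict inequality lands. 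Everything else is a routine size count.
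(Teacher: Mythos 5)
Your proof is correct, and it reaches the conclusion by the same skeleton as the paper: the identical bijection $\varphi$ (path edges shifted by one, $v_jv_{j+1}\mapsto v_{j-1}v_j$, pendent edges to pendent edges), the identical distinguished pair $e_1=v_1v_2$, $\varphi(e_1)=v_{k-1}v_k$, and the identical appeal to Lemma \ref{lem-5}(1). Where you genuinely depart from the paper is in how the $(\varphi,\mu)$-similarity is verified. The paper splits into three cases according to the number of center edges of $T$ (none, one, or two, via Lemma \ref{lem-4}), and in each case resolves every minimum explicitly, e.g.\ $\mu_T(v_jv_{j+1})=n_1+j$ up to the center edge and $n_k+k-j$ beyond it. Your shift identity makes all of this unnecessary: removing $v_{j}v_{j+1}$ from $T$ and removing $v_{j-1}v_j$ from $T'$ produce components of literally the same pair of sizes $\{j+n_1,\,(k-j)+n_k\}$, so the two $\mu$-values agree as minima of identical pairs, with no need to know which side is smaller. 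Likewise your final comparison $\mu_{T}(v_1v_2)=n_1+1<\min\{k+n_1,\,n_k\}=\mu_{T'}(v_{k-1}v_k)$ handles in one line what the paper settles separately in each case ($n_1+k$ in Case 1, $n_k$ in Cases 2 and 3). The payoff of your version is brevity and uniformity — it covers the boundary situations ($k=2$, or $n_1=0$ as needed for Corollary \ref{cor-5}) without any special attention — while the paper's case analysis buys nothing extra here beyond making the positions of the center edges explicit. Your argument is the better write-up of the same proof.
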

\begin{proof}
Since $T'$ is the edge-shift of $T$, we may assume that the pendent edge $v_kx_1$ of $T$ is shifted  to $v_1 x_1$.
We define bijection $\varphi: E(T)\longrightarrow E(T')$ such that
\begin{equation}\nonumber \label{simi-eq-2}\varphi(e)=\left\{\begin{array}{ll}
e, & \mbox{ if $e\neq v_kx_1$ is a pendent edge of $T$}\\
v_1x_1, & \mbox{ if $e= v_kx_1$ }\\
v_{k-1}v_k, & \mbox{ if $e=v_1v_2$ }\\
v_{j-1}v_{j}, &  \mbox{ if $e=v_jv_{j+1}$ where $2\leq j\leq k-1$.}
\end{array}\right.
\end{equation}
It is clear that $\mu_{T}(e)=\mu_{T'}(\varphi(e))=1$ if $e$ is a pendent edge of $T$. In what follows, we will prove our result by distinguishing three situations to verify the condition of Lemma \ref{lem-5}.

{\flushleft\bf Case 1.} $T$ has no center edge.

By  Lemma \ref{lem-4} (2), we see  that $v_k$ must be the proper centroidal vertex in $T$. Thus,
$$\left\{\begin{array}{ll}
\mu_{T}(v_jv_{j+1})=min\{n_1+j,n_k+k-j\}=n_1+j<\lfloor\frac{n}{2}\rfloor,\\
\mu_{T'}(v_jv_{j+1})=min\{n_1+j+1,n_k+k-j-1\}=n_1+j+1,
\end{array}\right.
$$
where $1\leq j\leq k-1$. It follows that  $\mu_{T}(v_jv_{j+1})=\mu_{T'}(v_{j-1}v_{j})=\mu_{T'}(\varphi(v_jv_{j+1}))$ for $2\leq j\leq k-1$.
Therefore, $T$ and $T'$ are $(\varphi,\mu)$-similar with respect to $e_1=v_1v_2$, and $e_2=v_{k-1}v_k=\varphi(e_1)$. Note that
$$\mu_{T}(e_1)=\mu_{T}(v_1v_2)=n_1+1<n_1+k=\mu_{T'}(v_{k-1}v_k)=\mu_{T'}(e_2).$$
By Lemma \ref{lem-5} (1), we have $T\prec T'$.

{\flushleft\bf Case 2.} $T$ has exactly one center edge.

It is clear that $n_k+1=\lceil\frac{n}{2}\rceil$ for $n$ is odd or $n_k+1\leq\frac{n}{2}$ for $n$ is even by Lemma \ref{lem-4} (2) and (3). We may assume that $v_hv_{h+1}$ is the center edge of $T$, where $h\geq 3$ since $n_1+2\leq n_k$. Thus
$$\mu_{T}(v_jv_{j+1})=\left\{\begin{array}{ll}
n_1+j,& \mbox{ if $1\leq j\leq h$}\\
n_k+k-j,& \mbox{ if $h+1\leq j\leq k-1$}\\
\end{array}\right.$$
and
$$\mu_{T'}(v_jv_{j+1})=\left\{\begin{array}{ll}
n_1+j+1,& \mbox{ if $1\leq j\leq h-1$}\\
n_k+k-j-1,& \mbox{ if $h\leq j\leq k-1$.}\\
\end{array}\right.
$$
It follows that $\mu_{T}(v_jv_{j+1})=\mu_{T'}(v_{j-1}v_{j})=\mu_{T'}(\varphi(v_jv_{j+1}))$ for $2\leq j\leq k-1$.
Therefore, $T$ and $T'$ are $(\varphi,\mu)$-similar with respect to $e_1=v_1v_2$, and $e_2=v_{k-1}v_k=\varphi(e_1)$. Since $n_1+2\leq n_k$ we have
$$\mu_{T}(e_1)=\mu_{T}(v_1v_2)=n_1+1<n_k=\mu_{T'}(v_{k-1}v_k)=\mu_{T'}(e_2).$$
By Lemma \ref{lem-5} (1), we have $T\prec T'$.

{\flushleft\bf Case 3.} $T$ has two center edges.

By Lemma \ref{lem-4} (3), we see that $T$ has only one centroidal vertex and $n_k+1\leq\lfloor\frac{n}{2}\rfloor$. Let $v_hv_{h+1}$ and $v_{h+1}v_{h+2}$ be two center edges in $T$, here $h\geq 3$ since $n_1+2\leq n_k$. Thus, it holds that
$$\mu_{T}(v_jv_{j+1})=\left\{\begin{array}{ll}
n_1+j,& \mbox{ if $1\leq j\leq h-1$}\\
n_1+h=\lfloor\frac{n}{2}\rfloor,& \mbox{ if $j=h$}\\
n_k+k-j,& \mbox{ if $h+1\leq j\leq k-1$}\\
\end{array}\right.$$
and
$$\mu_{T'}(v_jv_{j+1})=\left\{\begin{array}{ll}
n_1+j+1,& \mbox{ if $1\leq j\leq h-1$}\\
n_k+k-h-1=\lfloor\frac{n}{2}\rfloor,& \mbox{ if $j=h$}\\
n_k+k-j-1,& \mbox{ if $h+1\leq j\leq k-1$.}\\
\end{array}\right.
$$
Similarly,  $T$ and $T'$ are $(\varphi,\mu)$-similar with respect to $e_1=v_1v_2$, and $e_2=v_{k-1}v_k=\varphi(e_1)$. Since $n_1+2\leq n_k$ we have
$$\mu_{T}(e_1)=\mu_{T}(v_1v_2)=n_1+1<n_k=\mu_{T'}(v_{k-1}v_k)=\mu_{T'}(e_2).$$
By Lemma \ref{lem-5} (1), we have $T\prec T'$.

We complete this proof.
\end{proof}

Given $n>k\ge 2$, let $n=n_1+n_k+k$, the double star pathes $CP_{n;n_1,n_k}$ can be ordered in the following by the Lemma \ref{lem-7}.

\begin{cor}\label{cor-5}
$CP_{n;0,n-k}\prec CP_{n;1,n-k-1}\prec CP_{n;2,n-k-2}\prec\cdots\prec CP_{n;\lfloor \frac{n-k}{2}\rfloor, \lceil \frac{n-k}{2}\rceil}$.
\end{cor}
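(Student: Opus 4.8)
The plan is to recognize the corollary as an immediate iteration of Lemma \ref{lem-7}. For each integer $j$ in the range $0 \le j \le \lfloor\frac{n-k}{2}\rfloor - 1$, I would set $n_1 = j$ and $n_k = n - k - j$, so that $CP_{n;j,\,n-k-j}$ is a genuine double star path (its parameters satisfy $n_1 + n_k + k = n$), and then apply Lemma \ref{lem-7} to obtain $CP_{n;j,\,n-k-j} \prec CP_{n;j+1,\,n-k-j-1}$. Chaining these consecutive links and invoking transitivity of $\prec$ (which follows from transitivity of $\preceq$ together with the fact that a strict partial-sum inequality is preserved under composition) yields the full chain displayed in the statement.

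The only thing to verify is that the hypothesis $n_1 + 2 \le n_k$ of Lemma \ref{lem-7} holds at each link. Substituting $n_1 = j$ and $n_k = n-k-j$, this reads $j + 2 \le n - k - j$, i.e. $2j + 2 \le n - k$, i.e. $j + 1 \le \frac{n-k}{2}$. Since $j+1$ is an integer, this is equivalent to $j + 1 \le \lfloor\frac{n-k}{2}\rfloor$, that is $j \le \lfloor\frac{n-k}{2}\rfloor - 1$, which is exactly the range over which $j$ runs. Hence the hypothesis is satisfied for every link, with the final link (at $j = \lfloor\frac{n-k}{2}\rfloor - 1$, producing the terminal term $CP_{n;\lfloor\frac{n-k}{2}\rfloor,\lceil\frac{n-k}{2}\rceil}$) being the tight one, where $n_1 + 2 \le n_k$ holds with equality precisely when $n-k$ is even and strictly when $n-k$ is odd.

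I expect essentially no obstacle, since the argument is pure bookkeeping once Lemma \ref{lem-7} is in hand. The two points I would flag with a short remark are: (i) the first link starts at $n_1 = 0$, so I would note that Lemma \ref{lem-7} still applies there, the edge-shift merely introducing the first pendant vertex at $v_1$, which legitimizes $CP_{n;0,\,n-k} \prec CP_{n;1,\,n-k-1}$; and (ii) the terminal step, where one must confirm that the balanced or near-balanced configuration is indeed reached by an edge-shift meeting $n_1 + 2 \le n_k$, which the index computation above settles uniformly without any parity casework. With these two remarks in place, the corollary follows.
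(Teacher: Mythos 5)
Your proof is correct and is essentially the paper's own argument: the paper justifies Corollary \ref{cor-5} in a single line by iterating Lemma \ref{lem-7} over consecutive double star paths, exactly as you do. Your index check that the hypothesis $n_1+2\le n_k$ holds precisely for $j\le\lfloor\frac{n-k}{2}\rfloor-1$, together with the remark on the degenerate first link $n_1=0$, merely makes explicit the bookkeeping the paper leaves implicit.
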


\subsection{Maximum  graph in   the order set $\langle\mathcal{C}(n,k),\preceq\rangle$}
In this subsection, we will show that the double star path $CP_{n;\lfloor\frac{n-k}{2}\rfloor,\lceil\frac{n-k}{2}\rceil}$ is the maximum graph in  $\langle\mathcal{C}(n,k),\preceq\rangle$.
\begin{thm}\label{thm-5}
Let $T=CP(n;n_1,\ldots,n_k)\in \mathcal{C}(n,k)$ be the caterpillar tree with respect to the path $P_k=v_1\cdots v_k$, where $2\leq k\leq n-1$. Then
$T\preceq CP_{n;\lfloor\frac{n-k}{2}\rfloor,\lceil\frac{n-k}{2}\rceil}$,
with equality if and only if $T= CP_{n;\lfloor\frac{n-k}{2}\rfloor,\lceil\frac{n-k}{2}\rceil}$.
\end{thm}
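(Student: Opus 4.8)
The plan is to transform an arbitrary $T=CP(n;n_1,\dots,n_k)$ into the balanced double star path by a finite sequence of order-increasing moves and then to invoke Corollary~\ref{cor-5}. For bookkeeping I set $S_j=j+\sum_{i=1}^{j}n_i$, the number of vertices on the $v_1$-side of the path edge $v_jv_{j+1}$, so that $0=S_0<S_1<\cdots<S_{k-1}<S_k=n$, that $\mu_T(v_jv_{j+1})=\min\{S_j,n-S_j\}$, and that every pendant edge has $\mu=1$. The elementary operation I use is a single-pendant relocation: remove one pendant from an internal path vertex $v_t$ (with $2\le t\le k-1$) and attach it to $v_{t-1}$ or to $v_{t+1}$; this keeps the tree in $\mathcal{C}(n,k)$ and, as I check next, changes $\mu$ on exactly one edge.

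First I would verify that such a relocation alters a single edge-value, so that Lemma~\ref{lem-5} applies. If the pendant moves from $v_t$ to $v_{t+1}$, then its membership affects only $S_t$, which drops by $1$, while every other $S_j$ and every pendant $\mu$ is unchanged; mapping the relocated pendant edge $v_tx\mapsto v_{t+1}x$ and fixing all other edges gives a bijection $\varphi$ under which $T$ and the new tree $T'$ are $(\varphi,\mu)$-similar with respect to the single edge $v_tv_{t+1}$. Lemma~\ref{lem-5} then reduces the comparison of $T$ and $T'$ to comparing $\mu$ on that one edge, so $T\prec T'$ precisely when the relocation strictly raises $\min\{S_t,n-S_t\}$.

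The heart of the argument is that from any configuration having an internal pendant there is always a relocation that strictly increases the order. A right move $v_t\to v_{t+1}$ sends $S_t\mapsto S_t-1$, and $\min\{S_t,n-S_t\}$ strictly increases iff $S_t\ge\lceil\frac{n}{2}\rceil+1$; symmetrically a left move $v_t\to v_{t-1}$ sends $S_{t-1}\mapsto S_{t-1}+1$ and strictly increases $\min\{S_{t-1},n-S_{t-1}\}$ iff $S_{t-1}\le\lfloor\frac{n}{2}\rfloor-1$. If neither strict increase were available, then $S_t\le\lceil\frac{n}{2}\rceil$ and $S_{t-1}\ge\lfloor\frac{n}{2}\rfloor$; but an internal pendant forces $n_t\ge1$, hence $S_t-S_{t-1}=1+n_t\ge2$, which would give $\lceil\frac{n}{2}\rceil-\lfloor\frac{n}{2}\rfloor\ge2$, impossible. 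Thus a strictly order-increasing relocation always exists, and I expect this dichotomy (together with the boundary bookkeeping for odd $n$) to be the main obstacle to make clean.

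Finally, repeatedly performing such strictly increasing relocations produces a strictly $\prec$-increasing sequence of edge division vectors; as there are only finitely many such vectors and $\prec$ is irreflexive and transitive, the process terminates, and it can halt only at a tree with no internal pendant, i.e.\ a double star path $CP_{n;a,b}$ with $a+b=n-k$. Hence $T\preceq CP_{n;a,b}$, and Corollary~\ref{cor-5} gives $CP_{n;a,b}\preceq CP_{n;\lfloor\frac{n-k}{2}\rfloor,\lceil\frac{n-k}{2}\rceil}$, which establishes the inequality. For the equality case, any internal pendant forces at least one strict step, whence $T\prec CP_{n;a,b}\preceq CP_{n;\lfloor\frac{n-k}{2}\rfloor,\lceil\frac{n-k}{2}\rceil}$; and if $T$ is already a double star path but unbalanced, Corollary~\ref{cor-5} makes the last relation strict. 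Therefore $\mathbf{r}(T)$ coincides with $\mathbf{r}$ of the target only when $T$ has no internal pendant and is balanced, that is, exactly when $T=CP_{n;\lfloor\frac{n-k}{2}\rfloor,\lceil\frac{n-k}{2}\rceil}$.
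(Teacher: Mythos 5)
Your proof is correct, and it reaches the theorem by a noticeably different route than the paper, even though both arguments share the same basic engine (Lemma \ref{lem-5} applied to pendant relocations along the spine, finished off by Lemma \ref{lem-7} via Corollary \ref{cor-5}). The paper's proof is an extremality-by-contradiction argument: it takes $T$ to be the maximum element of the finite order set $\langle\mathcal{C}(n,k),\preceq\rangle$, and to rule out an internal branching vertex it shifts the \emph{entire} bundle of pendants at $v_i$ at once, choosing the direction of the shift by a case analysis on whether $T$ has a center edge or a proper centroidal vertex (Lemma \ref{lem-4}). You instead give a direct, constructive argument: relocate \emph{one} pendant at a time, decide the direction purely arithmetically from the prefix sums $S_j$ (right move helps iff $S_t\ge\lceil\frac{n}{2}\rceil+1$, left move iff $S_{t-1}\le\lfloor\frac{n}{2}\rfloor-1$, and the gap $S_t-S_{t-1}=1+n_t\ge2$ rules out the bad middle case), and then invoke finiteness of the set of edge division vectors together with irreflexivity and transitivity of $\prec$ to terminate at a double star path. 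What your approach buys is self-containment and uniformity: you never need Lemma \ref{lem-4}, the centroidal-vertex machinery, or the split into ``has a center edge / has no center edge,'' since the threshold arithmetic handles all parities and positions at once; it also makes the monotone process explicit rather than extracting it from a maximality hypothesis. What the paper's approach buys is shorter individual steps (whole branches move in one application of Lemma \ref{lem-5}) and reuse of structural lemmas that it needs elsewhere anyway (the same centroid/center-edge analysis drives Theorem \ref{thm-4} and later sections). Your equality analysis is also sound: any internal pendant forces a strict step, and an unbalanced double star path is strictly below the balanced one by Corollary \ref{cor-5}, so $\mathbf{r}(T)=\mathbf{r}(T^*)$ forces $T=T^*$.
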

\begin{proof}
Let $T^*=CP_{n;\lfloor\frac{n-k}{2}\rfloor,\lceil\frac{n-k}{2}\rceil}$,  $T$ be a the maximum graph in $\langle\mathcal{C}(n,k),\preceq\rangle$ and   we will   show that $T= T^*$. By Lemma \ref{lem-7}, it suffices to show that $T$ has no any  branching vertex $v_i$ as the   internal vertex of $P_k$. By the way of  contradiction,  we assume  that $v_i$ is a branching vertex, i.e., $n_i>0$, for some $1<i<k$. We distinguish two situations bellow.

{\flushleft\bf Case 1.} $T$ has center edge.

Let  $e_h=v_hv_{h+1}$ be a center edge of $T$, where $1\le h\le k-1$. Then $\mu(e_h)=\min\{n_{v_h}(e_h),n_{v_{h+1}}(e_h)\}=\lfloor\frac{n}{2}\rfloor$. We may assume that $n_{v_h}(e_h)=\lfloor\frac{n}{2}\rfloor$ and $n_{v_{h+1}}(e_h)=\lceil\frac{n}{2}\rceil$ by symmetry of $P_k$.  Let $T'$ be a branch-shift of $T$ from $v_i$ to $v_{i'}$, where $i'=i+1$ or $i-1$. Let $U(v_i)=\{x_1,\ldots,x_{n_i}\}$ be the pendent vertices sticking at $v_i$. Now we  define a bijection $\varphi: E(T)\longrightarrow E(T')$ such that
\begin{equation}\nonumber \label{simi-eq-11}\varphi(e)=\left\{\begin{array}{ll}
e, & \mbox{ if $e\not=v_ix_l$ for any $ x_l\in U(v_i)$ }\\
v_{i'}x_l, &  \mbox{ if $e=v_ix_l$ for $ x_l\in U(v_i)$. }
\end{array}\right.
\end{equation}
It is clear  that   $\mu_{T}(e)=\mu_{T'}(\varphi(e))$ if $e\not=v_iv_{i'}$, and $\varphi(v_iv_{i'})=v_iv_{i'}$. Therefore, $T$ and $T'$ are $(\varphi,\mu)$-similar with respect to $e_i=v_iv_{i'}$. If $i\le h$, we can take $i'=i-1$ and have $\mu_{T}(e_i)=n_{v_{i-1}}(e_i)<n_{v_{i-1}}(e_i)+n_{i}=\mu_{T'}(e_i)<\lfloor\frac{n}{2}\rfloor$. By Lemma \ref{lem-5} (1), we have $T\prec T'$ which contradicts the maximum hypothesis
of $T$. If $i\ge h+1$, we take $i'=i+1$. Clearly,  if $i>h+1$ then $\mu_{T}(e_i)=n_{v_{i+1}}(e_i)<n_{v_{i+1}}(e_i)+n_{i}=\mu_{T'}(e_i)<\lfloor\frac{n}{2}\rfloor$; if  $i=h+1$, then
$n_{v_{h+2}}(e_{h+1})=n_{v_{h+2}}(v_{h+1}v_{h+2})=n_{v_{h+1}}(e_{h})-1-n_{h+1}=\lceil\frac{n}{2}\rceil-1-n_{h+1}$, and we also have $\mu_{T}(e_{h+1})=n_{v_{h+2}}(e_{h+1})<n_{v_{h+2}}(e_{h+1})+n_{h+1}=\mu_{T'}(e_{h+1})\le\lfloor\frac{n}{2}\rfloor$. Thus $T\prec T'$ by Lemma \ref{lem-5} (1), a contradiction.

{\flushleft\bf Case 2.} $T$ has no center edge.

According to Lemma \ref{lem-4} (2), $T$ has only one proper centroidal vertex $v_h$, where $1\le h\le k$. Then $n_{v_{h+1}}(v_hv_{h+1}), n_{v_{h-1}}(v_{h-1}v_h)<\lfloor\frac{n}{2}\rfloor$. We also consider $T'$ defined in Case 1. If $i<h$, we can  take $i'=i-1$ and have $\mu_{T}(e_i)=n_{v_{i-1}}(e_i)<n_{v_{i-1}}(e_i)+n_{i}=\mu_{T'}(e_i)<\lfloor\frac{n}{2}\rfloor$; if $i> h$, we can take $i'=i+1$ and have $\mu_{T}(e_i)=n_{v_{i+1}}(e_i)<n_{v_{i+1}}(e_i)+n_{i}=\mu_{T'}(e_i)<\lfloor\frac{n}{2}\rfloor$.  It follows that   $T\prec T'$ as in Case 1. If $i=h$, we can take $T''$ that is obtained from $T$ by shifting exactly one  edge  $v_hx_1$ to $v_{h-1}x_1$, and define
$$\psi(e)=\left\{\begin{array}{ll}
e, & \mbox{ if $e\not=v_hx_1$ }\\
v_{h-1}x_1, &  \mbox{ if $e=v_hx_1$. }
\end{array}\right.
$$
Clearly, $T$ and $T''$ are $(\psi,\mu)$-similar with respect to $e_{h-1}=v_{h-1}v_h$. We have $\mu_{T}(e_{h-1})=n_{v_{h-1}}(e_{h-1})<n_{v_{h-1}}(e_{h-1})+1=\mu_{T''}(e_{h-1})\le\lfloor\frac{n}{2}\rfloor$.  By Lemma \ref{lem-5} (1), we have $T\prec T''$, a contradiction.

We complete this proof.
\end{proof}

\subsection{Minimum graph in the order set $\langle\mathcal{C}(n,k),\preceq\rangle$}

For $1\le s\le k$, let
$CP^s_{n,k}=CP(n;\underbrace{0,\ldots,0}_{s-1},n-k,\underbrace{0,\ldots,0}_{k-s})$ denote the caterpillar tree shown in Figure \ref{fig-4}. We always assume that $s\leq \lceil\frac{k}{2}\rceil$ because of
$CP^s_{n,k}= CP^{k-s+1}_{n,k}$ by symmetry.

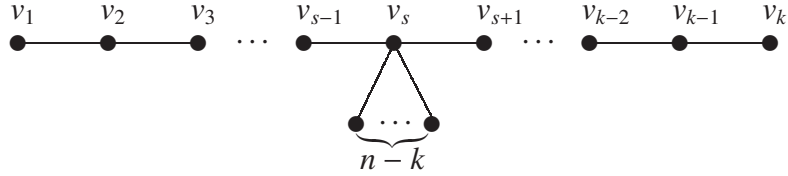
\begin{figure}[H]
\unitlength 1mm 
\linethickness{0.4pt}
\ifx\plotpoint\undefined\newsavebox{\plotpoint}\fi 
\begin{picture}(102,23)(0,5)
\put(2,21.75){\line(1,0){24}}
\put(40,21.75){\line(1,0){24}}
\put(78,21.75){\line(1,0){24}}
\put(2,21.75){\circle*{2}}
\put(14,21.75){\circle*{2}}
\put(26,21.75){\circle*{2}}
\put(40,21.75){\circle*{2}}
\put(52,21.75){\circle*{2}}
\put(64,21.75){\circle*{2}}
\put(78,21.75){\circle*{2}}
\put(90,21.75){\circle*{2}}
\put(102,21.75){\circle*{2}}

\multiput(52,21.75)(-.0336879433,-.070035461){150}{\line(0,-1){.070035461}}

\multiput(52,21.75)(.0336363636,-.0654545455){150}{\line(0,-1){.0654545455}}

\put(47,11){\circle*{2}}
\put(57,11){\circle*{2}}
\put(1,25){$v_{1}$}
\put(13,25){$v_{2}$}
\put(25,25){$v_{3}$}
\put(39,25){$v_{s-1}$}
\put(51,25){$v_{s}$}
\put(63,25){$v_{s+1}$}
\put(77,25){$v_{k-2}$}
\put(89,25){$v_{k-1}$}
\put(101,25){$v_{k}$}
\put(47,10){$\underbrace{}$}
\put(47.5,5){$n-k$}
\put(50,11){\ldots}
\put(31,21.75){\ldots}
\put(69,21.75){\ldots}
\end{picture}
  \caption{ The graph $CP_{n,k}^s$}\label{fig-4}
\end{figure}

Notice that $CP_{n,1}^1=CP(n;n-1)$ and $CP_{n,2}^1=CP(n;n-2,0)$ are the star $K_{1,n-1}$, in general we always assume that $3\le k \le n-1$. In this subsection, we will prove that the caterpillar tree $CP_{n,k}^{\lceil\frac{k}{2}\rceil}$ is the minimum graph in $\langle\mathcal{C}(n,k),\preceq\rangle$.

\begin{lem}\label{lem-6}
Let $CP_{n,k}^s$ be the caterpillar tree with respect to $P_k=v_1v_2\cdots v_k$, where $k\geq3$, $s<\frac{k}{2}$ and $n-k\geq1$, then $CP_{n,k}^s\succ CP_{n,k}^{s+1}$.
\end{lem}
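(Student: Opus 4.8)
The plan is to realize $CP^{s+1}_{n,k}$ as the branch-shift of $T := CP^s_{n,k}$ obtained by moving the block of $n-k$ pendent vertices from $v_s$ to $v_{s+1}$, and then to apply the criterion in Lemma \ref{lem-5}. Writing $T' := CP^{s+1}_{n,k}$ and denoting the pendent vertices by $x_1,\ldots,x_{n-k}$ (attached at $v_s$ in $T$ and at $v_{s+1}$ in $T'$), I would define the bijection $\varphi\colon E(T)\to E(T')$ to be the identity on every path edge $v_iv_{i+1}$ and to send each pendent edge $v_sx_j$ to $v_{s+1}x_j$. The goal is to show that $T$ and $T'$ are $(\varphi,\mu)$-similar with respect to the single edge $e_1 = v_sv_{s+1}$, with $\varphi(e_1)=e_1$, and that $\mu_T(e_1) > \mu_{T'}(e_1)$, whence $T \succ T'$ by Lemma \ref{lem-5}(2).

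First I would verify the $(\varphi,\mu)$-similarity by a routine case analysis on the position of an edge relative to $v_s$ and $v_{s+1}$. Every pendent edge has $\mu=1$ in both trees, so these contribute equally. For a path edge $v_iv_{i+1}$ with $i\le s-1$ the pendent block lies entirely on the $v_{i+1}$-side in both trees, giving $\mu_T(v_iv_{i+1})=\mu_{T'}(v_iv_{i+1})=i$; for $i\ge s+1$ the block lies on the $v_i$-side in both trees, giving $\mu_T(v_iv_{i+1})=\mu_{T'}(v_iv_{i+1})=\min\{i+n-k,\,k-i\}$. Hence the $\mu$-value agrees under $\varphi$ on every edge except $e_1=v_sv_{s+1}$, confirming $(\varphi,\mu)$-similarity with respect to $e_1$.

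It remains to compare the two $\mu$-values at $e_1$. In $T$ the block of $n-k$ pendents sits on the $v_s$-side of $e_1$, so $\mu_T(e_1)=\min\{s+n-k,\,k-s\}$, whereas in $T'$ it sits on the $v_{s+1}$-side, so $\mu_{T'}(e_1)=\min\{s,\,n-s\}=s$. The hypotheses now force strict inequality: $s<\tfrac{k}{2}$ gives $k-s>s$, and $n-k\ge 1$ gives $s+n-k>s$, so $\mu_T(e_1)=\min\{s+n-k,\,k-s\}>s=\mu_{T'}(e_1)$. Applying Lemma \ref{lem-5}(2) yields $T\succ T'$, i.e. $CP^s_{n,k}\succ CP^{s+1}_{n,k}$. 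I expect the only delicate point to be the bookkeeping in the case analysis of the second paragraph, namely checking that every path edge other than $e_1$ keeps the pendent block on the same side in both trees so that exactly one edge changes its $\mu$-value; the final inequality is then immediate from the two stated hypotheses.
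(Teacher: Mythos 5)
Your proof is correct and is essentially identical to the paper's own argument: the same realization of $CP^{s+1}_{n,k}$ as the branch-shift of $CP^{s}_{n,k}$ from $v_s$ to $v_{s+1}$, the same bijection $\varphi$ fixing path edges and sending $v_sx_j$ to $v_{s+1}x_j$, and the same application of Lemma \ref{lem-5}(2) via $\mu_T(v_sv_{s+1})=\min\{s+n-k,\,k-s\}>s=\mu_{T'}(v_sv_{s+1})$. The only difference is that you write out the case analysis verifying $(\varphi,\mu)$-similarity, which the paper asserts without detail.
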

\begin{proof}
Let $T=CP_{n,k}^s$, and $T'=CP_{n,k}^{s+1}$ is just obtained from $T$ by shifting  branch $T_{v_s}$ from $v_s$ to $v_{s+1}$. Let $U(v_s)=\{x_1,\ldots,x_{n-k}\}$ be the pendent vertices sticking at $v_s$. Now we define a bijection $\varphi: E(T)\longrightarrow E(T')$  such that
\begin{equation}\nonumber\label{simi-eq-1}\varphi(e)=\left\{\begin{array}{ll}
e, & \mbox{ if $e\not=v_sx_i$ for any $ x_i\in U(v_s)$ }\\
v_{s+1}x_i, &  \mbox{ if $e=v_sx_i$ for $ x_i\in U(v_s)$. }
\end{array}\right.
\end{equation}
We know that  $T$ and $T'$ are $(\varphi,\mu)$-similar with respect to $e_s=v_sv_{s+1}$, and $e_s=\varphi(e_s)$. Also note that $s<\frac{k}{2}$ and $n-k\geq1$,  for the edge $e_s=v_sv_{s+1}$ we have
$$\begin{array}{ll}\mu_{T}(e_s)&=\min\{s+n-k,k-s\}\\
&>s\\
&=min\{s,n-s\}\\
&=\mu_{T'}(e_s).
\end{array}
$$
It follows $CP_{n,k}^s\succ CP_{n,k}^{s+1}$ by the Lemma \ref{lem-5} (2).
\end{proof}

\begin{thm}\label{thm-4}
Let $T=CP(n;n_1,\ldots,n_k)\in \mathcal{C}(n,k)$ be the caterpillar tree respect to the path $P_k=v_1\cdots v_k$, where $2\leq k\leq n-1$. Then $T\succeq CP_{n,k}^{\lceil\frac{k}{2}\rceil}$, with equality if and only if $T= CP_{n,k}^{\lceil\frac{k}{2}\rceil}$.
\end{thm}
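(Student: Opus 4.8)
The plan is to adapt the strategy of Theorem~\ref{thm-5}: I fix a minimum graph $T$ of $\langle\mathcal{C}(n,k),\preceq\rangle$ and prove $T=CP_{n,k}^{\lceil\frac{k}{2}\rceil}$. Since the minimum element is unique up to $\approx$ and $CP_{n,k}^{\lceil\frac{k}{2}\rceil}$ is a fixed tree, this simultaneously yields $T\succeq CP_{n,k}^{\lceil\frac{k}{2}\rceil}$ for every $T\in\mathcal{C}(n,k)$ and the equality clause. The argument has two parts: first I show a minimum graph carries pendent vertices on only one spine vertex, so that $T=CP_{n,k}^{s}$ for some $s$; then I locate $s$ at the centre by Lemma~\ref{lem-6}.

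For the first part I argue by contradiction. Suppose two spine vertices $v_i$ and $v_j$ with $i<j$ satisfy $n_i,n_j\geq 1$, and set $e_i=v_iv_{i+1}$, $e_{j-1}=v_{j-1}v_j$ together with the $v_1$-side component sizes $L_i=n_{v_i}(e_i)$ and $L_{j-1}=n_{v_{j-1}}(e_{j-1})$; as $i\le j-1$ we have $L_i\le L_{j-1}$. If $L_i\le\frac{n}{2}$ I branch-shift the clump at $v_i$ to $v_{i+1}$, and otherwise $L_{j-1}\ge L_i>\frac{n}{2}$, so the $v_j$-side of $e_{j-1}$ is the lighter side and I branch-shift the clump at $v_j$ to $v_{j-1}$. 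Either branch-shift preserves the spine $P_k$, hence stays in $\mathcal{C}(n,k)$, and, exactly as in Lemma~\ref{lem-6}, fixes the $\mu$-value of every edge but the crossed spine edge (all pendent edges keep $\mu=1$, and the other spine edges keep their component sizes); thus the two trees are $(\varphi,\mu)$-similar with respect to that edge. Moving a clump from the lighter to the heavier side strictly lowers the $\mu$-value of the crossed edge — in the first case it drops from $L_i$ to $L_i-n_i$, in the second from $n-L_{j-1}$ to $n-L_{j-1}-n_j$ — so Lemma~\ref{lem-5}(2) produces a strictly smaller tree, contradicting minimality. Hence all $n-k$ pendent vertices lie on one spine vertex and $T=CP_{n,k}^{s}$.

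For the second part, the reflection symmetry $CP_{n,k}^{s}=CP_{n,k}^{k-s+1}$ lets me assume $s\le\lceil\frac{k}{2}\rceil$. Lemma~\ref{lem-6} then gives the chain $CP_{n,k}^{1}\succ CP_{n,k}^{2}\succ\cdots\succ CP_{n,k}^{\lceil\frac{k}{2}\rceil}$ (each step being legitimate because the indices involved satisfy $s<\frac{k}{2}$ up to $s=\lceil\frac{k}{2}\rceil-1$), so the only concentrated caterpillar that can be minimum is $CP_{n,k}^{\lceil\frac{k}{2}\rceil}$, forcing $s=\lceil\frac{k}{2}\rceil$ and $T=CP_{n,k}^{\lceil\frac{k}{2}\rceil}$.

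The delicate point is the first part: I must guarantee that whenever two spine vertices bear pendent vertices there is always a genuinely order-decreasing branch-shift. This is exactly what the dichotomy $L_i\le\frac{n}{2}$ versus $L_i>\frac{n}{2}$, combined with the monotonicity $L_i\le L_{j-1}$, secures, since it always exhibits a clump sitting on the lighter side of the edge it is shifted across. I would still need to check the boundary case $L_i=\frac{n}{2}$ (even $n$, crossed edge a centre edge), which falls into the first branch and still gives the strict drop $L_i\to L_i-n_i$, and to note that the required neighbours exist, i.e.\ $i+1\le k$ and $j-1\ge 1$, both immediate from $1\le i<j\le k$.
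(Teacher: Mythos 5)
Your proof is correct, and its skeleton is the same as the paper's: fix a minimum graph $T$ of $\langle\mathcal{C}(n,k),\preceq\rangle$, rule out two pendant-bearing spine vertices by an order-decreasing branch-shift via Lemma \ref{lem-5} (2), then pin down the location $s=\lceil\frac{k}{2}\rceil$ by Lemma \ref{lem-6}. Where you genuinely differ is in how the decreasing branch-shift is found. The paper routes this through the center-edge/centroidal-vertex machinery of Lemma \ref{lem-4}: it splits into the case where $T$ has a center edge $e_h=v_hv_{h+1}$ (with subcases $\{l,t\}=\{h,h+1\}$ versus $l<h$ or $t>h+1$) and the case where $T$ has a proper centroidal vertex, using that structure to decide which clump to move and in which direction. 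You replace all of this by the elementary dichotomy on $L_i=n_{v_i}(v_iv_{i+1})$: either $L_i\le\frac{n}{2}$, so the clump at $v_i$ sits on the light side of $e_i$ and shifting it to $v_{i+1}$ drops $\mu(e_i)$ from $L_i$ to $L_i-n_i$, or else $L_{j-1}\ge L_i>\frac{n}{2}$, so the clump at $v_j$ sits on the light side of $e_{j-1}$ and shifting it to $v_{j-1}$ drops $\mu(e_{j-1})$ from $n-L_{j-1}$ to $n-L_{j-1}-n_j$; your verifications that every other edge keeps its $\mu$-value, that the shifted tree stays in $\mathcal{C}(n,k)$, and that the boundary case $L_i=\frac{n}{2}$ still gives a strict drop are all sound. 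This buys a self-contained concentration step (no appeal to Lemma \ref{lem-4}) and covers $k=2$ uniformly, which the paper instead handles separately via Corollary \ref{cor-5}; the paper's version, in exchange, reuses structural lemmas already developed and makes explicit where the centroid of the extremal tree lies. One shared gloss, inherited from the paper rather than introduced by you: speaking of "the minimum graph" presupposes a minimum (not merely minimal) element of the preorder exists; both arguments are cleanest read as showing every minimal element equals $CP_{n,k}^{\lceil\frac{k}{2}\rceil}$, whence by finiteness and transitivity every $T$ lies above it.
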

\begin{proof}
If $k=2$, then $T=CP(n;n_1,n_2)$. Since $ CP(n;0,n-2)= CP_{n,2}^1$  is also a star $S_n$, we have  $CP(n;n_1,n_2)\succeq CP(n;0,n-2)=CP_{n,2}^1=S_n$ by the Corollary \ref{cor-5}. In  what follows, we assume that  $k\geq 3$ and distinguish three cases with regard to the number of center edges in $T$.

Let $T^*=CP_{n,k}^{\lceil\frac{k}{2}\rceil}$, $T$ be a minimum graph in $\langle\mathcal{C}(n,k),\preceq\rangle$ and   we will show that $T= T^*$. By Lemma \ref{lem-6}, it suffices to show that $T$ has exactly one  branching vertex $v_i$ as the internal vertex of $P_k$. By the way of  contradiction,  we assume  that $v_l$ and $v_t$ are branching vertices, i.e., $n_l,n_t>0$, for some $1\leq l<t\leq k$. We distinguish two situations bellow.

{\flushleft\bf Case 1.} $T$ has a center edge.

Let  $e_h=v_hv_{h+1}$ be a center edge of $T$, where $1\le h\le k-1$. Then $\mu(e_h)=\min\{n_{v_h}(e_h),n_{v_{h+1}}(e_h)\}=\lfloor\frac{n}{2}\rfloor$. We may assume that $n_{v_h}(e_h)=\lfloor\frac{n}{2}\rfloor$ and $n_{v_{h+1}}(e_h)=\lceil\frac{n}{2}\rceil$ by symmetry of $P_k$.
First assume that  $l=h$ and $t=h+1$.   Let $T'$ be a branch-shift of $T$ from $v_h$ to $v_{h+1}$. Let $U(v_h)=\{x_1,\ldots,x_{n_h}\}$ be the pendent vertices sticking at $v_h$. Now we  define a bijection $\varphi: E(T)\longrightarrow E(T')$ such that
\begin{equation}\nonumber\varphi(e)=\left\{\begin{array}{ll}
e, & \mbox{ if $e\not=v_hx_i$ for any $ x_i\in U(v_h)$ }\\
v_{h+1}x_i, &  \mbox{ if $e=v_hx_i$ for $ x_i\in U(v_h)$. }
\end{array}\right.
\end{equation}
It is clear  that   $\mu_{T}(e)=\mu_{T'}(\varphi(e))$ if $e\not=v_hv_{h+1}$, and $\varphi(v_hv_{h+1})=v_hv_{h+1}$. Therefore, $T$ and $T'$ are $(\varphi,\mu)$-similar with respect to $e_h=v_hv_{h+1}$. We have $\lfloor\frac{n}{2}\rfloor=\mu_{T}(e_h)=n_{v_{h}}(e_h)>n_{v_{h}}(e_h)-n_{h}=\mu_{T'}(e_h)$. By Lemma \ref{lem-5} (2), we have $T\succ T'$ which contradicts the minimum hypothesis
of $T$. Next assume that $l<h$ or $t>h+1$. Without loss of generality, let $l<h$ by symmetry of $P_k$. Let $T''$ be a branch-shift of $T$ from $v_l$ to $v_{l+1}$ and define a bijection $\psi: E(T)\longrightarrow E(T'')$ such that
\begin{equation}\label{psi-eq-1}\psi(e)=\left\{\begin{array}{ll}
e, & \mbox{ if $e\not=v_lx_i$ for any $ x_i\in U(v_l)$ }\\
v_{l+1}x_i, &  \mbox{ if $e=v_lx_i$ for $ x_i\in U(v_l)$. }
\end{array}\right.
\end{equation}
It is clear  that   $\mu_{T}(e)=\mu_{T''}(\psi(e))$ if $e\not=v_lv_{l+1}$, and $\varphi(v_lv_{l+1})=v_lv_{l+1}$. Therefore, $T$ and $T''$ are $(\psi,\mu)$-similar with respect to $e_l=v_lv_{l+1}$. We have $\lfloor\frac{n}{2}\rfloor>\mu_{T}(e_l)=n_{v_{l}}(e_l)>n_{v_{l}}(e_l)-n_{l}=\mu_{T''}(e_l)$. By Lemma \ref{lem-5} (2), we have $T\succ T''$, a contradiction again.

{\flushleft\bf Case 2.}  $T$ has no center edge.

According to  Lemma \ref{lem-4} (2), $T$ has only one proper centroidal vertex $v_h$. Hence $n_{v_{h+1}}(v_hv_{h+1}), n_{v_{h-1}}(v_{h-1}v_h)<\lfloor\frac{n}{2}\rfloor$. Then  at least one of $l<h$ and $t>h$ will occur. We may assume that $l<h$ by symmetry of $P_k$.  Again let $T''$ be a branch-shift of $T$ from $v_l$ to $v_{l+1}$ and define $\psi$ as in Eq. (\ref{psi-eq-1}),  we have $T\succ T''$, a contradiction.

We complete this proof.
\end{proof}

From Theorem \ref{thm-5} and Theorem \ref{thm-4}, we obtain the following result.

\begin{cor}\label{cor-4-2}
Let $T=CP(n;n_1,\ldots,n_k)\in \mathcal{C}(n,k)$. Then
$CP_{n,k}^{\lceil\frac{k}{2}\rceil}\preceq T\preceq CP_{n;\lfloor\frac{n-k}{2}\rfloor,\lceil\frac{n-k}{2}\rceil}$, where $2\leq k\leq n-1$.
\end{cor}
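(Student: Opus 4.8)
The plan is to obtain this corollary as an immediate consequence of the two extremal results just established, namely Theorem \ref{thm-5} and Theorem \ref{thm-4}. Both of these theorems are stated for an \emph{arbitrary} caterpillar $T=CP(n;n_1,\ldots,n_k)\in\mathcal{C}(n,k)$ under exactly the hypothesis $2\leq k\leq n-1$, which is precisely the hypothesis of the corollary, so no further case analysis or boundary checking is needed. The structure of the argument is therefore simply to read off each of the two one-sided bounds and then splice them together into a single chain.

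First I would invoke Theorem \ref{thm-5}, which identifies $CP_{n;\lfloor\frac{n-k}{2}\rfloor,\lceil\frac{n-k}{2}\rceil}$ as the maximum element of $\langle\mathcal{C}(n,k),\preceq\rangle$; applied to $T$ this yields the right-hand inequality $T\preceq CP_{n;\lfloor\frac{n-k}{2}\rfloor,\lceil\frac{n-k}{2}\rceil}$. Next I would invoke Theorem \ref{thm-4}, which identifies $CP_{n,k}^{\lceil\frac{k}{2}\rceil}$ as the minimum element of the same order set; applied to $T$ this gives $T\succeq CP_{n,k}^{\lceil\frac{k}{2}\rceil}$, equivalently $CP_{n,k}^{\lceil\frac{k}{2}\rceil}\preceq T$. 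Concatenating these two bounds produces exactly the displayed chain $CP_{n,k}^{\lceil\frac{k}{2}\rceil}\preceq T\preceq CP_{n;\lfloor\frac{n-k}{2}\rfloor,\lceil\frac{n-k}{2}\rceil}$.

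Since the relation ``$\preceq$'' is reflexive and transitive (as recorded in the introduction), there is no difficulty in combining the two one-sided estimates into a single statement. In effect, all of the substantive work -- the edge-shift transformation of Lemma \ref{lem-7}, the branch-shift transformations, and their verification through the $(\varphi,\mu)$-similarity criterion of Lemma \ref{lem-5} -- has already been carried out inside the proofs of Theorems \ref{thm-5} and \ref{thm-4}. Consequently I do not expect any genuine obstacle at this stage; the only thing that must be confirmed is that the ranges of validity of the two theorems agree with the range $2\leq k\leq n-1$ appearing in the corollary, which they do. (I note in passing that Theorem \ref{thm-4} treats $k=2$ as a separate subcase internally, via Corollary \ref{cor-5}, but its \emph{statement} still covers the full range, so nothing extra needs to be said here.)
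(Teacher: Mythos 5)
Your proof is correct and is exactly the paper's argument: the corollary is stated there as an immediate consequence of Theorem \ref{thm-5} (maximum) and Theorem \ref{thm-4} (minimum), combined via transitivity of ``$\preceq$''. Nothing further is needed.
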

Denote by $\mathcal{T}(n,k-1)$ the set of trees of order $n$ in which each tree has diameter $k-1$. In next section we will consider the extremal graph in $\langle\mathcal{T}(n,k-1),\preceq\rangle$.

\section{The minimum graph in the order set $\langle\mathcal{T}(n,k-1),\preceq\rangle$}
Theorem \ref{thm-4} conforms that  $CP_{n,k}^{\lceil\frac{k}{2}\rceil}$ is  the minimum graph among $\langle\mathcal{C}(n,k),\preceq\rangle$, and $CP_{n,k}^{\lceil\frac{k}{2}\rceil}\in \mathcal{T}(n,k-1)$ for $k\geq3$. To prove that $CP_{n,k}^{\lceil\frac{k}{2}\rceil}$ is also the minimum graph among $\langle\mathcal{T}(n,k-1),\preceq\rangle$, we need the following edge-moving transformation.
Let $T_1$ and $T_2$ be two trees of order $n_1\geq2$ and $n_2\geq2$, respectively, $T$ be the tree obtained from $T_1$ and $T_2$ by adding an edge between a vertex $u$ of $T_1$ and a vertex $v$ of $T_2$, and  $T'$ be the tree obtained from $T$ by contracting $uv$ to a vertex $u$ and attaching a pendent vertex $v$ to $u$ (shown in Figure \ref{fig-5}). We call $T'$ be the edge-moving transformation of $T$ with respect to $uv$.

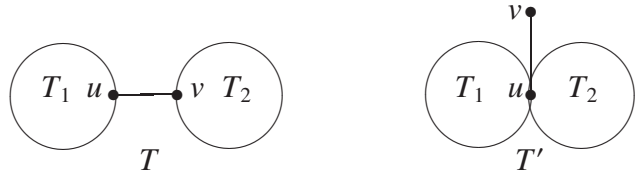
\begin{figure}[h]
\unitlength 1mm 
\linethickness{0.4pt}
\ifx\plotpoint\undefined\newsavebox{\plotpoint}\fi 
\begin{picture}(82.732,21.121)(0,0)
\put(6.915,8.77){\circle{13.83}}
\put(28.992,8.847){\circle{13.83}}
\put(62.142,8.996){\circle{13.83}}
\put(75.817,8.847){\circle{13.83}}
\multiput(13.753,8.919)(1.69462,.02973){5}{\line(1,0){1.69462}}
\put(69.051,9.068){\line(0,1){11.446}}
\put(13.604,9){\circle*{1.5}}
\put(22.077,9){\circle*{1.5}}
\put(69.051,9){\circle*{1.5}}
\put(69.051,20){\circle*{1.5}}
\put(4,8.7){$T_1$}
\put(10,8.7){$u$}
\put(28,8.7){$T_2$}
\put(23.8,8.7){$v$}
\put(59,8.7){$T_1$}
\put(66,8.7){$u$}
\put(74,8.7){$T_2$}
\put(66,19){$v$}
\put(17,-1){$T$}
\put(67,-1){$T'$}
\end{picture}
  \caption{The graphs $T$ and $T'$}\label{fig-5}
\end{figure}

\begin{lem}\label{lem-8}
Let $T$ be a tree with order $n\geq 4$. If $T'$ is the edge-moving transformation of $T$ with respect to $uv$, then $T\succ T'$.
\end{lem}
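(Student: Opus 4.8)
The plan is to apply the criterion of Lemma~\ref{lem-5} by exhibiting a natural edge-bijection under which $T$ and $T'$ differ, in the sense of $\mu$, only on the bridge $uv$. Write $n=n_1+n_2$. Recall that $E(T)=E(T_1)\cup E(T_2)\cup\{uv\}$, while in $T'$ the subtree $T_1$ is unchanged, the subtree $T_2$ is reattached at $u$ by identifying its vertex $v$ with $u$, and a fresh pendent vertex (still denoted $v$) is hung at $u$. This yields an obvious bijection $\varphi:E(T)\to E(T')$: fix every edge of $E(T_1)$, send every edge of $E(T_2)$ to its relabelled copy (with $v$ replaced by $u$), and send the bridge $uv$ of $T$ to the new pendent edge $uv$ of $T'$.

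The main step is to check that $\mu_{T}(e)=\mu_{T'}(\varphi(e))$ for every $e\neq uv$, by tracking how the two sides of $e$ split. For $e\in E(T_1)$, removing $e$ separates $T_1$ into parts $A,B$ with $u\in B$; in $T$ the $u$-side additionally carries all of $T_2$, so the component orders are $|A|$ and $|B|+n_2$, whereas in $T'$ the $u$-side carries the $n_2-1$ vertices coming from $T_2$ together with the new pendent vertex, again totalling $|B|+n_2$. Hence $\mu_{T}(e)=\mu_{T'}(e)=\min\{|A|,|B|+n_2\}$. Symmetrically, for $e\in E(T_2)$ the side not containing $v$ has the same order $|C|$ in both graphs, while the $v$-side has order $|D|+n_1$ in $T$ (it picks up all of $T_1$ through the bridge) and order $|D|+n_1$ in $T'$ (since $v$ is identified with $u$, and the new pendent vertex exactly compensates for the vertex lost in the contraction). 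Thus $\mu_{T}(e)=\mu_{T'}(e)=\min\{|C|,|D|+n_1\}$, and consequently $T$ and $T'$ are $(\varphi,\mu)$-similar with respect to $e_1=uv$.

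Finally I would compare the two values on the bridge itself. In $T$ the bridge separates $T_1$ from $T_2$, so $\mu_{T}(uv)=\min\{n_1,n_2\}\ge 2$ because $n_1,n_2\ge 2$; in $T'$ its image is a pendent edge, so $\mu_{T'}(\varphi(uv))=1$. Hence $\mu_{T}(uv)>\mu_{T'}(\varphi(uv))$, and Lemma~\ref{lem-5}(2) immediately gives $T\succ T'$. The only delicate point will be the careful accounting in the second paragraph, namely verifying that the contraction-plus-pendent operation leaves the split of every interior edge unchanged; once that bookkeeping is confirmed, the conclusion follows at once from the criterion.
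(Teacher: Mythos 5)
Your proof is correct and takes essentially the same approach as the paper: the same natural edge bijection (fixing $E(T_1)$, relabelling $E(T_2)$, sending the bridge to the new pendent edge), the observation that $T$ and $T'$ are $(\varphi,\mu)$-similar with respect to $uv$, and the comparison $\mu_T(uv)=\min\{n_1,n_2\}\ge 2>1=\mu_{T'}(\varphi(uv))$ followed by Lemma~\ref{lem-5}(2). The only difference is that you carry out explicitly the bookkeeping of the splits $\{|A|,|B|+n_2\}$ and $\{|C|,|D|+n_1\}$, which the paper simply asserts without verification.
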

\begin{proof}
We define $\varphi: E(T)\longrightarrow E(T')$ such that $\varphi(e)=e$ for any edge $e\in E(T)$. It is clear that $\mu_{T}(e)=\mu_{T'}(\varphi(e))$ if $e\neq uv$. Therefore, $T$ and $T'$ are  $(\varphi,\mu)$-similar with respect to $e_1=uv$.
$$
\mu_{T}(e_1)=\mu_{T}(uv)=min\{|T_1|,|T_2|\}>1=\mu_{T'}(uv)=\mu_{T'}(e_1).
$$
It follows $T\succ T'$ by Lemma \ref{lem-5} (2).
\end{proof}

For $T\in\mathcal{T}(n,k-1)$, $T$ can be presented  by $T=T(n;0,n_2,\ldots,n_{k-1},0)\in \mathcal{T}_{n;0,n_2,\ldots,n_{k-1},0}$, i.e., $T=T(n;0,n_2,\ldots,n_{k-1},0)$ is a tree with respect to $P_{k}=v_1v_2\cdots v_k$ such that $|T_{v_i}|=n_i$  for $i=2,3,\ldots,k-1$. By edge-moving transformation, we can transfer  the  edges of  $T_{v_i}$, one by one, to pendent  edges sticking at $v_i$, and finally we get the caterpillar tree $CP(n;0,n_2,\ldots,n_{k-1},0)$ from $T(n;0,n_2,\ldots,n_{k-1},0)$. Therefore, by applying Lemma \ref{lem-8}, we have $CP(n;0,n_2,\ldots,n_{k-1},0)\preceq  T(n;0,n_2,\ldots,n_{k-1},0)$. By applying Theorem \ref{thm-4}, we have $CP_{n,k}^{\lceil\frac{k}{2}\rceil}\preceq CP(n;0,n_2,\ldots,n_{k-1},0)$, and the equality holds if and only if  $T= CP_{n,k}^{\lceil\frac{k}{2}\rceil}$. It follows the result.
\begin{thm}\label{CP-thm-1}
For $T\in \mathcal{T}(n,k-1)$, $T\succeq CP_{n,k}^{\lceil\frac{k}{2}\rceil}$ with equality holds if  and only if  $T= CP_{n,k}^{\lceil\frac{k}{2}\rceil}$.
\end{thm}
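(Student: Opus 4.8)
The plan is to reduce an arbitrary tree of diameter $k-1$ to the target caterpillar in two order-decreasing stages, and then read off the equality case from the strictness of each stage. First I would fix a diametral path. Since $T\in\mathcal{T}(n,k-1)$ has diameter $k-1$, it contains a path $P_k=v_1v_2\cdots v_k$ on $k$ vertices; the two endpoints $v_1,v_k$ cannot carry any branch, for otherwise a strictly longer path would exist and contradict the diameter. Hence $T$ may be written as $T=T(n;0,n_2,\ldots,n_{k-1},0)\in\mathcal{T}_{n;0,n_2,\ldots,n_{k-1},0}$ with respect to $P_k$, where $|T_{v_i}|=n_i$ for $2\le i\le k-1$.

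The first stage straightens every branch into pendent edges. For each $2\le i\le k-1$ whose branch $T_{v_i}$ is not already a set of pendent vertices at $v_i$, there is an edge $uv$ lying inside that branch which is not a pendent edge of $T$; removing it splits $T$ into two parts of order $\ge 2$, so the edge-moving transformation of Lemma~\ref{lem-8} applies and yields a tree strictly below $T$ in the order while reducing the number of such edges by one. Iterating until no non-pendent branch edge survives produces the caterpillar $CP(n;0,n_2,\ldots,n_{k-1},0)\in\mathcal{C}(n,k)$, and the repeated use of Lemma~\ref{lem-8} gives $CP(n;0,n_2,\ldots,n_{k-1},0)\preceq T$, the inequality being strict whenever at least one edge-moving step was performed.

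The second stage is a direct appeal to Theorem~\ref{thm-4}. As $CP(n;0,n_2,\ldots,n_{k-1},0)$ is a caterpillar in $\mathcal{C}(n,k)$, Theorem~\ref{thm-4} gives $CP_{n,k}^{\lceil\frac{k}{2}\rceil}\preceq CP(n;0,n_2,\ldots,n_{k-1},0)$, with equality if and only if the caterpillar equals $CP_{n,k}^{\lceil\frac{k}{2}\rceil}$. Chaining the two stages yields
$CP_{n,k}^{\lceil\frac{k}{2}\rceil}\preceq CP(n;0,n_2,\ldots,n_{k-1},0)\preceq T$,
which is exactly the desired bound $T\succeq CP_{n,k}^{\lceil\frac{k}{2}\rceil}$.

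The delicate point, and where I expect to spend the most care, is the equality case, because ``$\preceq$'' is not antisymmetric: equality here means $T\approx CP_{n,k}^{\lceil\frac{k}{2}\rceil}$ (identical edge division vectors), and I must upgrade this to an honest isomorphism. If $T\approx CP_{n,k}^{\lceil\frac{k}{2}\rceil}$, then both stages must be non-strict. Non-strictness of the first stage forces that no edge-moving was applied, i.e. every branch $T_{v_i}$ already consisted only of pendent vertices, so $T=CP(n;0,n_2,\ldots,n_{k-1},0)$; non-strictness of the second stage, via the equality clause of Theorem~\ref{thm-4}, forces $CP(n;0,n_2,\ldots,n_{k-1},0)=CP_{n,k}^{\lceil\frac{k}{2}\rceil}$. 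Combining the two gives $T=CP_{n,k}^{\lceil\frac{k}{2}\rceil}$, which completes the characterization.
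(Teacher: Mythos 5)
Your proposal is correct and follows essentially the same route as the paper's own proof: write $T$ as $T(n;0,n_2,\ldots,n_{k-1},0)$ with respect to a diametral path, flatten the branches into pendent edges by repeated use of the edge-moving transformation of Lemma~\ref{lem-8}, and then invoke Theorem~\ref{thm-4} on the resulting caterpillar $CP(n;0,n_2,\ldots,n_{k-1},0)$. Your explicit handling of the equality case (tracking strictness of each edge-moving step and noting that ``equality'' means $T\approx CP_{n,k}^{\lceil\frac{k}{2}\rceil}$ before upgrading to isomorphism) is in fact more careful than the paper's terse conclusion, but it is the same argument.
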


By repeating applying Lemma \ref{lem-8}, we get the order of minimum graphs with  different diameters but the same order $n$ as follows:
\begin{cor}\label{cor-5-1}
$S_n= CP_{n,2}^1= CP_{n,3}^2\prec CP_{n,4}^2 \prec\cdots \prec CP_{n,n-2}^{\lceil\frac{n-2}{2}\rceil}\prec CP_{n,n-1}^{\lceil\frac{n-1}{2}\rceil}\prec P_n$.
\end{cor}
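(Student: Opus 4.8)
The plan is to prove the chain link by link: for each $k$ with $3\le k\le n-1$ I will establish the single inequality $CP_{n,k}^{\lceil k/2\rceil}\prec CP_{n,k+1}^{\lceil (k+1)/2\rceil}$, and then splice these together by transitivity of $\preceq$. The two degenerate ends of the chain, namely $S_n=CP_{n,2}^1=CP_{n,3}^2$ and $P_n=CP_{n,n}^{\lceil n/2\rceil}$, I will dispatch by direct inspection.

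First I would record a convenient description of the minimum graph $CP_{n,k}^{\lceil k/2\rceil}$ as a spider (starlike tree) centred at $v_{\lceil k/2\rceil}$: it consists of two \emph{long legs} (the two arcs of $P_k$ leaving the centre), of lengths $\lceil k/2\rceil-1$ and $k-\lceil k/2\rceil$, together with $n-k$ \emph{short legs} of length $1$ (the pendent vertices, all attached at the centre). In this language both $CP_{n,2}^1$ and $CP_{n,3}^2$ collapse to the star $S_n$, since every non-central vertex is a leaf; and $CP_{n,n}^{\lceil n/2\rceil}=P_n$ because then $n-k=0$, so there are no short legs and the spider is just the path. This pins down the two endpoints of the chain.

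Next comes the crux. I claim $CP_{n,k}^{\lceil k/2\rceil}$ is \emph{exactly} the edge-moving transformation of $T:=CP_{n,k+1}^{\lceil (k+1)/2\rceil}$. Write the longer of the two long legs of $T$ as $c-w_1-w_2-\cdots-w_m$, with $c$ the centre. I take $T_2$ to be this entire leg, set $u=c\in T_1$ and $v=w_1\in T_2$, so the joining edge is $uv=cw_1$. Because $k\ge 3$ the longer leg has $m\ge2$ vertices, so $|T_2|=m\ge 2$ and $|T_1|=n-m\ge2$, and the hypotheses of Lemma~\ref{lem-8} are met. Contracting $cw_1$ reattaches $w_2$ to $c$ (turning the leg into $c-w_2-\cdots-w_m$, shorter by one) and places $w_1$ as a new pendent vertex at $c$. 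A short parity check confirms the resulting leg lengths: when $k$ is even one has $\lceil (k+1)/2\rceil=\lceil k/2\rceil+1$ and the two legs of $T$ are $k/2,k/2$, which become $k/2-1,k/2$; when $k$ is odd one has $\lceil (k+1)/2\rceil=\lceil k/2\rceil$ and the legs $(k-1)/2,(k+1)/2$ become $(k-1)/2,(k-1)/2$. In both cases the leg-length multiset together with the now $n-k$ short legs is precisely that of $CP_{n,k}^{\lceil k/2\rceil}$. Hence $T'=CP_{n,k}^{\lceil k/2\rceil}$, and Lemma~\ref{lem-8} yields $T\succ T'$, i.e. $CP_{n,k}^{\lceil k/2\rceil}\prec CP_{n,k+1}^{\lceil (k+1)/2\rceil}$.

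Finally I would assemble the covering relations for $k=3,4,\ldots,n-1$ with the two endpoint identifications to produce the whole chain, invoking transitivity of $\preceq$. I expect the main obstacle to be purely bookkeeping: verifying that the single edge-moving, applied to the longer leg, lands exactly on the balanced centred caterpillar $CP_{n,k}^{\lceil k/2\rceil}$ rather than on some other spider with a superficially similar leg profile. This is where the parity analysis above, the bound $m\ge 2$ guaranteeing the transformation is legitimate, and the symmetry convention $CP_{n,k}^s=CP_{n,k}^{k-s+1}$ (to normalise the centre position to $\lceil k/2\rceil$) all have to be applied with care.
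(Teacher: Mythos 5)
Your proposal is correct and follows essentially the same route as the paper: the paper's proof of Corollary \ref{cor-5-1} is precisely a repeated application of Lemma \ref{lem-8} along the chain, which is exactly your link-by-link argument. Your parity bookkeeping (legs $k/2,k/2\mapsto k/2-1,k/2$ for $k$ even and $(k-1)/2,(k+1)/2\mapsto(k-1)/2,(k-1)/2$ for $k$ odd) and the checks $|T_1|,|T_2|\ge 2$ are accurate, and in fact supply details the paper leaves implicit.
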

By Theorem \ref{thm-5} and Corollary \ref{cor-5-1}, we have the known result in \cite{Vuki}.

\begin{cor}\label{cor-5-2}
For $T\in \mathcal{T}_n$, we have $S_n\preceq T\preceq P_n$.
\end{cor}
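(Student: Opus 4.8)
The plan is to prove the two inequalities of Corollary \ref{cor-5-2} separately: the lower bound $S_n\preceq T$ via the diameter reduction already developed in the paper, and the upper bound $T\preceq P_n$ by a direct comparison of edge division vectors.

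For the lower bound, every $T\in\mathcal{T}_n$ has a diameter $k-1$ with $2\le k\le n$, hence $T\in\mathcal{T}(n,k-1)$. Theorem \ref{CP-thm-1} gives $T\succeq CP_{n,k}^{\lceil k/2\rceil}$, while Corollary \ref{cor-5-1} places $CP_{n,k}^{\lceil k/2\rceil}$ in a chain whose smallest element is $S_n=CP_{n,2}^1$, so $CP_{n,k}^{\lceil k/2\rceil}\succeq S_n$. By transitivity of $\preceq$ we obtain $S_n\preceq T$. The degenerate values $k=2,3$ (where $CP_{n,k}^{\lceil k/2\rceil}=S_n$) and $k=n$ (where $CP_{n,n}^{\lceil n/2\rceil}=P_n$, the unique tree of diameter $n-1$) are consistent with the cited statements and need no separate treatment.

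For the upper bound I would first compute $\mathbf{r}(P_n)$: writing $P_n=v_1\cdots v_n$, the edge separating the first $i$ vertices from the rest has $\mu=\min\{i,n-i\}$, so every value in $\{1,\dots,\lfloor n/2\rfloor\}$ is attained exactly twice except possibly the largest, whence $\sum_{i=k}^{\lfloor n/2\rfloor}r_i(P_n)=n+1-2k$ for $1\le k\le\lfloor n/2\rfloor$. By the definition of $\preceq$ it therefore suffices to show, for every tree $T$ and every such $k$, that $\sum_{i=k}^{\lfloor n/2\rfloor}r_i(T)=|\{e\in E(T):\mu(e)\ge k\}|\le n+1-2k$. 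To count these edges I would root $T$ at a centroidal vertex $c$ (Lemma \ref{lem-2}); since every proper subtree then has size $s_v\le\lfloor n/2\rfloor$, the edge $e_v$ above a non-root vertex $v$ satisfies $\mu(e_v)=s_v$, so the edges with $\mu\ge k$ are exactly the non-root vertices of the ancestor-closed set $V_k=\{v:s_v\ge k\}$, which forms a subtree rooted at $c$; thus the quantity to bound is $|V_k|-1$.

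The crux is this counting estimate, which I would split by the number $\ell$ of leaves of $V_k$. If $\ell\ge 2$, the subtrees of $T$ hanging below these leaves are pairwise disjoint and each has at least $k$ vertices, so $n\ge(|V_k|-\ell)+\ell k=|V_k|+\ell(k-1)\ge|V_k|+2(k-1)$, giving $|V_k|-1\le n+1-2k$. If instead $V_k$ is a single downward path $c=u_0,u_1,\dots,u_d$, the sizes strictly decrease and the centroid property forces $s_{u_1}\le\lfloor n/2\rfloor$, so $s_{u_1}>\cdots>s_{u_d}\ge k$ are $d$ distinct integers in $[k,\lfloor n/2\rfloor]$; hence $|V_k|-1=d\le\lfloor n/2\rfloor-k+1\le n+1-2k$, the last step using $k\le\lceil n/2\rceil$. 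The case $V_k=\{c\}$ is trivial, and the two-centroid case is harmless because rooting at either centroid still keeps every proper subtree of size $\le\lfloor n/2\rfloor$. Combining the two bounds yields $S_n\preceq T\preceq P_n$. The main difficulty, as indicated, is precisely the $\ell=1$ subcase, where one must exploit the extra information from the centroid (not merely the strict decrease of subtree sizes along a branch) to close the gap.
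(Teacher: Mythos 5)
Your proof is correct, and it splits into a half that mirrors the paper and a half that is genuinely different from it. For the lower bound you do what the paper (tersely) does: the paper's entire proof of this corollary is the single sentence invoking Theorem \ref{thm-5} and Corollary \ref{cor-5-1}, and the combination that actually works for an arbitrary tree is the one you spell out, namely Theorem \ref{CP-thm-1} applied to the diameter class of $T$, followed by the chain of Corollary \ref{cor-5-1} and transitivity. For the upper bound you depart from the paper, and usefully so: nothing cited in the paper's one-line proof bounds a \emph{general} tree from above, since Theorem \ref{thm-5} concerns only caterpillars and Lemma \ref{lem-8} moves trees \emph{downward} in the order (so caterpillarization cannot push $T$ up toward $P_n$); the machinery that would close this (Theorem \ref{qq-thm-1} together with Corollary \ref{cor-3}) appears only in Section 6, and in effect the paper is simply recording the known result of \cite{Vuki}. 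Your replacement is a direct verification of the defining inequalities: $\sum_{i=k}^{\lfloor n/2\rfloor}r_i(P_n)=n+1-2k$, and for any $T$ rooted at a centroidal vertex (Lemma \ref{lem-2}) the edges with $\mu(e)\ge k$ correspond to the non-root vertices of your ancestor-closed set $V_k$, because every proper subtree satisfies $s_v\le\lfloor n/2\rfloor$ and hence $\mu(e_v)=s_v$. Your two-case count is sound: for $\ell\ge2$ leaves of $V_k$ the pairwise disjoint subtrees hanging below them give $n\ge|V_k|+\ell(k-1)$, while for $\ell=1$ the sizes $s_{u_1}>\cdots>s_{u_d}\ge k$ are distinct integers in $[k,\lfloor n/2\rfloor]$, so $|V_k|-1\le\lfloor n/2\rfloor-k+1\le n+1-2k$, the last step needing only $k\le\lceil n/2\rceil$; and the two-centroid case indeed changes nothing, since all proper subtrees still have size at most $\lfloor n/2\rfloor$. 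What the paper's route buys is brevity inside its transformation framework, at the price of implicitly outsourcing the bound $T\preceq P_n$ to \cite{Vuki}; what your route buys is a self-contained, elementary proof of that bound using only the centroid lemma and the definition of $\preceq$, which fills the one real gap in the paper's one-line derivation.
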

\begin{remark}
We know from Theorem \ref{thm-4} and Theorem \ref{CP-thm-1} that $CP_{n,k}^{\lceil\frac{k}{2}\rceil}$ is the common  minimum graph in $\langle\mathcal{C}(n,k),\preceq\rangle$ and  $\langle\mathcal{T}(n,k-1),\preceq\rangle$, respectively. From Corollary  \ref{cor-4-2}, $CP_{n;\lfloor\frac{n-k+2}{2}\rfloor,\lceil\frac{n-k+2}{2}\rceil}$ is the maximum graph in $\langle\mathcal{C}(n,k-2),\preceq\rangle$ with diameter $k-1$. However, the maximum graph in $\langle\mathcal{T}(n,k-1),\preceq\rangle$ leaves unknown.
\end{remark}

\section{Extremal  graphs in the order set $\langle\mathcal{T}_{n}(q),\preceq\rangle$}

Let $\mathcal{T}_{n}(q)$ be a set of trees with order $n$ and $q$ pendent vertices. In this section, we will give the maximum and minimum graphs in $\langle\mathcal{T}_{n}(q),\preceq\rangle$. If $q=2$, then $\mathcal{T}_{n}(2)$ contains a unique graph $P_n=CP_{n;1,1}$. Similarly $S_n=CP_{n;\lfloor\frac{n-1}{2}\rfloor,\lceil\frac{n-1}{2}\rceil}$ is a unique graph in $\mathcal{T}_{n}(n-1)$. Thus we always assume that $2<q<n-1$ in this section.

\subsection{ Maximum graph in the order set $\langle\mathcal{T}_{n}(q),\preceq\rangle$ }

Recall that  $CP_{n;\lfloor\frac{q}{2}\rfloor,\lceil\frac{q}{2}\rceil}$ is the double star path  obtained from the path $P_{n-q}=v_1v_2\cdots v_{n-q}$ by averagely  sticking $q$ pendent vertices at $v_1$ and $v_k$ (see Figure \ref{fig-3}), respectively. It is clear that $CP_{n;\lfloor\frac{q}{2}\rfloor,\lceil\frac{q}{2}\rceil}\in \mathcal{T}_{n}(q)$. In this section,  we will show that $CP_{n;\lfloor\frac{q}{2}\rfloor,\lceil\frac{q}{2}\rceil}$ is the unique maximum graph in $\langle\mathcal{T}_{n}(q),\preceq\rangle$.

\begin{thm}\label{qq-thm-1}
Let $T\in \mathcal{T}_{n}(q)$, where $2<q<n-1$. Then
$T\preceq CP_{n;\lfloor\frac{q}{2}\rfloor,\lceil\frac{q}{2}\rceil}$,
with equality if and only if $T= CP_{n;\lfloor\frac{q}{2}\rfloor,\lceil\frac{q}{2}\rceil}$.
\end{thm}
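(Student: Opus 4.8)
The plan is to reduce everything to Theorem \ref{thm-5} by showing that a largest tree must be a caterpillar. Given $T\in\mathcal{T}_{n}(q)$, let $S$ be the subtree of $T$ spanned by its non-pendent vertices. Since $2<q<n-1$ and every pendent vertex of $T$ is adjacent to a non-pendent one, $S$ is a tree on $n-q$ vertices, and $T$ is recovered from $S$ by attaching the $q$ pendent vertices to vertices of $S$, each leaf of $S$ receiving at least one of them. The decisive observation is that $S$ is a path if and only if $T$ is a caterpillar with spine $S=P_{n-q}$, that is, $T\in\mathcal{C}(n,n-q)$ (note $2\le n-q\le n-3$). Once $T$ is such a caterpillar, Theorem \ref{thm-5} with $k=n-q$ gives $T\preceq CP_{n;\lfloor\frac{q}{2}\rfloor,\lceil\frac{q}{2}\rceil}$, and since $q\ge 3$ forces $\lfloor\frac{q}{2}\rfloor,\lceil\frac{q}{2}\rceil\ge 1$, this double star path really has $q$ pendent vertices and lies in $\mathcal{T}_{n}(q)$. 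Thus it suffices to produce, for every non-caterpillar $T$, a member of $\mathcal{T}_{n}(q)$ that is strictly larger in the order.

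The tool is a \emph{straightening} transformation. If $T$ is not a caterpillar then $S$ is not a path, so $S$ has a vertex $c$ with $d_{S}(c)\ge 3$. Let $B_{1},\dots,B_{r}$ (where $r=d_{S}(c)\ge 3$) be the components of $T-c$ that contain a vertex of $S$, indexed so that $m_{1}=|B_{1}|\le m_{2}=|B_{2}|\le\cdots$, let $x$ be the neighbour of $c$ in $B_{1}$, and let $y$ be the neighbour of $c$ in $B_{2}$. I would form $T'$ by deleting the edge $cx$ and adding the edge $yx$, that is, by re-attaching $B_{1}$ to $y$. This leaves $d_{T}(c)\ge 2$ and only raises $d_{T}(y)$, so no vertex changes pendent status and $T'\in\mathcal{T}_{n}(q)$. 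Taking $\varphi:E(T)\to E(T')$ to be the identity except that $cx$ is relabelled $yx$, every edge other than $e=cy$ keeps its $\mu$-value, so $T$ and $T'$ are $(\varphi,\mu)$-similar with respect to $e$. Here $\mu_{T}(e)=\min\{m_{2},\,n-m_{2}\}=m_{2}$, while $\mu_{T'}(e)=\min\{m_{1}+m_{2},\,n-m_{1}-m_{2}\}$, and the elementary bound $n\ge 1+m_{1}+2m_{2}>m_{1}+2m_{2}$ (valid because $c$ has at least three branches, whence $m_{3}\ge m_{2}$) yields $\mu_{T'}(e)>m_{2}=\mu_{T}(e)$. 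Lemma \ref{lem-5}(1) then gives $T\prec T'$.

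To finish I would iterate: as long as the current tree is not a caterpillar the straightening applies and strictly increases the edge division vector. Because these vectors range over a finite set and $\prec$ is a strict order on them, the process terminates, necessarily at a caterpillar $C\in\mathcal{C}(n,n-q)\cap\mathcal{T}_{n}(q)$ with $T\preceq C$. Theorem \ref{thm-5} then gives $C\preceq CP_{n;\lfloor\frac{q}{2}\rfloor,\lceil\frac{q}{2}\rceil}$, whence $T\preceq CP_{n;\lfloor\frac{q}{2}\rfloor,\lceil\frac{q}{2}\rceil}$. For the equality clause, if $T$ is not the double star path then either $T$ is not a caterpillar, in which case $T\prec C\preceq CP_{n;\lfloor\frac{q}{2}\rfloor,\lceil\frac{q}{2}\rceil}$, or $T$ is a caterpillar different from it, in which case Theorem \ref{thm-5} already yields a strict inequality; either way $T\prec CP_{n;\lfloor\frac{q}{2}\rfloor,\lceil\frac{q}{2}\rceil}$, so equality forces $T=CP_{n;\lfloor\frac{q}{2}\rfloor,\lceil\frac{q}{2}\rceil}$.

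The step I expect to require the most care is designing the straightening so that it simultaneously (a) preserves the number of pendent vertices, (b) alters the $\mu$-value of exactly one edge, and (c) provably increases that value. Grafting $B_{1}$ onto the neighbour $y$ of $c$, rather than deeper in $B_{2}$, is what secures (b), and choosing $B_{2}$ to be a smallest-but-one branch is what secures (c) through the inequality $n>m_{1}+2m_{2}$; a careless choice of target can make the modified edge overshoot the centre and instead \emph{decrease} $\mu$, so this selection is the genuinely delicate point. The remaining subtlety is the termination argument, where I rely on $\prec$ being a strict order on the finite set of edge division vectors in place of extracting an extremal element by hand.
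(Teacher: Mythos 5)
Your proof is correct, and it follows a genuinely different route from the paper's. The paper works directly with a maximum element $T$ of $\langle\mathcal{T}_{n}(q),\preceq\rangle$: it fixes a centroidal vertex $u$ (Lemma \ref{lem-2}) and reshapes $T$ around $u$ in three stages --- each branch at $u$ must be a path carrying its pendants at the far end, no branch can be a single pendent vertex, and there can be only two branches --- where in every stage centroidality of $u$ supplies the inequality showing that the one affected edge raises its $\mu$-value; Lemma \ref{lem-7} then balances the two pendant groups. You instead prove a single ``straightening'' move: at a vertex $c$ of the derived tree $S$ with $d_{S}(c)\ge3$, graft the smallest $S$-branch $B_{1}$ onto the root $y$ of the second-smallest branch $B_{2}$; only the edge $cy$ changes its $\mu$-value, from $m_{2}$ to $\min\{m_{1}+m_{2},\,n-m_{1}-m_{2}\}$, and strictness follows from $n\ge 1+m_{1}+m_{2}+m_{3}\ge 1+m_{1}+2m_{2}$ --- so the existence of a third branch does exactly the job that centroidality does in the paper. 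Your iteration-plus-finiteness termination argument is sound, and it reduces the statement, including the equality clause, to Theorem \ref{thm-5}. What your route buys is modularity and economy: no centroid machinery (Lemmas \ref{lem-2} and \ref{lem-4}) is needed, the size-ordering of the branches yields the key inequality for free, and the endgame of balancing pendants between the two ends is delegated to Theorem \ref{thm-5} rather than redone via Lemma \ref{lem-7}. What the paper's route buys is a direct structural description of the maximum tree without iteration, at the cost of a longer case analysis --- and it, too, invokes Theorem \ref{thm-5} in one degenerate subcase. You were also right to record that each leaf of $S$ carries at least one pendant of $T$: that is what places your terminal caterpillar in $\mathcal{C}(n,n-q)$ under the paper's normalization $1\le n_{1}\le n_{k}$, so that Theorem \ref{thm-5} applies with $k=n-q$ and its extremal graph is precisely $CP_{n;\lfloor\frac{q}{2}\rfloor,\lceil\frac{q}{2}\rceil}\in\mathcal{T}_{n}(q)$.
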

\begin{proof}
If $q=n-2$, it is clear that $T\preceq CP_{n;\lfloor\frac{n-2}{2}\rfloor,\lceil\frac{n-2}{2}\rceil}$ by  Lemma \ref{lem-7}. In  what follows, we assume that  $q\leq n-3$.
Let $T$ be a maximum graph in $\langle\mathcal{T}_{n}(q),\preceq\rangle$. We will show that $T=CP_{n;\lfloor\frac{q}{2}\rfloor,\lceil\frac{q}{2}\rceil}$ in what follows. According to Lemma \ref{lem-2} (1), we may assume that $u$ is a  centroidal vertex of $T$ with neighbors $v_1,\ldots,v_r$.
For an edge $e_i=uv_i$, let  $T_i=T_{v_i}(e_i)$ denote the  component of $T-e_i$ containing $v_i$. Without loss of generality, let $m_i$ be the order of $T_i$ such that  $1\leq m_1\leq m_2\leq\cdots\leq m_r$, $q_i$ be the number of pendent vertices of $T$ in $T_i$ for $i=1,2,\ldots,r$,  and thus $q=\sum_{i=1}^rq_i$.
It means that each $T_i$ has $m_i$ vertices and contains $q_i$ pendent vertices of $T$. Since $q\leq n-3$, there exists $m_t>1$ for some $1\le t\le r$. Denote by $S_{m_t}(q_t)=CP_{m_t;0,q_t}$ the tree  consisting of a path $P_{m_t-q_t}=z_1\cdots z_{m_t-q_t}$ and sticking  $q_t$ pendent vertices at  $z_{m_t-q_t}$, where $z_1=v_t$.

First we  show that $T_t=S_{m_t}(q_t)$. In fact, it suffices to prove that $T_t$ has at most one vertex of degree great than two in $T$. Otherwise, there exists two adjacent vertices $x_1$ and $x_1'$  of $T_t$ with $d_T(x_1)\ge 3$ and $d_T(x_1')\ge 2$ such that $H=v_t\cdots x_1x_1'$ is a path in $T_t$, where  $x_1$ may be equal to $v_t$. Let $y_1\in N(x_1)\backslash H$, and $T_{y_1}(x_1y_1)$ be the component of $T-x_1y_1$ containing $y_1$.  We construct $T'$ obtained from $T$ by deleting edge  $x_1y_1 $ and adding $x_1'y_1$. Clearly, $T'\in \mathcal{T}_{n}(q)$. We define $\varphi: E(T)\longrightarrow E(T')$ such that
\begin{equation}\nonumber\label{simi-eq-4}\varphi(e)=\left\{\begin{array}{ll}
e, & \mbox{ if $e\not=x_1y_1$ is any edge of $T$}\\
x_1'y_1, &  \mbox{ if $e=x_1y_1$. }
\end{array}\right.
\end{equation}
It is clear that $\mu_{T}(e)=\mu_{T'}(\varphi(e))$ if $e\neq x_1x_1'$. Therefore, $T$ and $T'$ are  $(\varphi,\mu)$-similar with respect to $e_1=x_1x_1'$, and $\varphi(e_1)=e_1$. Additionally, since $u$ is a  centroidal vertex of $T$, we have
$$
\mu_{T'}(e_1)=\mu_{T}(x_1'x_1)+|T_{y_1}(x_1y_1)|>\mu_{T}(x_1'x_1)=\mu_{T}(e_1).
$$ By Lemma \ref{lem-5} (1), we have $T\prec T'$.
However, $T'\in \mathcal{T}_{n}(q)$, this is a contradiction. It implies that  $T_t$ is either a path in the case of $q_t=1$, or a path  sticking   $q_t$ pendent vertices at its one end, i.e., $T_t= S_{m_t}(q_t)$ in the case of $q_t>1$.

Next  we show  that $T_i\not=v_i$ for any $1\le i\le r$. Since  otherwise, $T_1=v_1$, i.e., $uv_1$ is a pendent edge. Note that $n>q+1$, there is $v_i$ such that $d_{T}(v_i)\ge 2$. If such a $v_i$ is unique then $v_i=v_r$ according to our assumption. Thus $T_r=S_{m_r}(q_r)$ and $T_i=v_i$ for $i=1,2,\ldots,r-1$. It means that $T$ is a caterpillar tree. By Theorem \ref{thm-5}, $T= CP_{n;\lfloor\frac{q}{2}\rfloor,\lceil\frac{q}{2}\rceil}$ as our required.   We may further assume that $d_T(v_{r-1}),d_T(v_r)\ge 2$, and  thus $|T_{r-1}|+|T_r|\le n-2$. Without loss of generality, let $|T_r|<\lfloor\frac{n}{2}\rfloor$.  We can  construct $T''=T-uv_1+v_rv_1\in\mathcal{T}_{n}(q)$ and define $\varphi: E(T)\longrightarrow E(T'')$ such that
\begin{equation}\nonumber\label{simi-eq-4}\varphi(e)=\left\{\begin{array}{ll}
e, & \mbox{ if $e\not=uv_1$ is any edge of $T$}\\
v_rv_1, &  \mbox{ if $e=uv_1$. }
\end{array}\right.
\end{equation}
It is clear that $\mu_{T}(e)=\mu_{T''}(\varphi(e))$ if $e\neq uv_r$. Therefore, $T$ and $T''$ are  $(\varphi,\mu)$-similar with respect to $e_1=uv_r$, and $\varphi(e_1)=e_1$. Additionally, since $u$ is a  centroidal vertex of $T$, we have
$$
\mu_{T''}(e_1)=\min\{|T_r|+1,n-(|T_r|+1)\}=|T_r|+1=\mu_{T}(uv_r)+1>\mu_{T}(uv_r)=\mu_{T}(e_1).
$$
Therefore, $T\prec T''$ by Lemma \ref{lem-5} (1). This contradicts  the maximum hypothesis  of $T$.

At last we show that  $r=2$. Then $T= CP_{n;\lfloor\frac{q}{2}\rfloor,\lceil\frac{q}{2}\rceil}$ by Lemma \ref{lem-7},  it is all right. Otherwise, let $r\ge3$.
Let $T'''$ be the tree obtained from $T$ by deleting the edge $uv_r$ and adding the edge $v_1v_r$. Clearly, $T'''\in \mathcal{T}_{n}(q)$.
Now, we define bijection $\varphi: E(T)\longrightarrow E(T''')$ such that
\begin{equation}\nonumber\label{simi-eq-4}\varphi(e)=\left\{\begin{array}{ll}
e, & \mbox{ if $e\not=uv_r$ is any edge of $T$}\\
v_1 v_r, &  \mbox{ if $e=uv_r$. }
\end{array}\right.
\end{equation}
It is clear that $\mu_{T}(e)=\mu_{T'''}(\varphi(e))$ if $e\neq uv_1$. Therefore, $T$ and $T'''$ are $(\varphi,\mu)$-similar with respect to $e_1=uv_1$, and $\varphi(e_1)=e_1$. Recall that  $|T_1|=\min\{|T_i|, i=1,2,\ldots,r\}$, we have
$$\mu_{T'''}(e_1)=min\{|T_r|+|T_1|,\sum_{i=2}^{r-1}|T_i|+1\}>|T_1|=\mu_{T}(e_1).$$
Therefore,  $T\prec T'''$ by Lemma \ref{lem-5} (1), a contradiction  again.

We complete this proof.
\end{proof}

Theorem \ref{qq-thm-1} concludes that $CP_{n;\lfloor\frac{q}{2}\rfloor,\lceil\frac{q}{2}\rceil}$ is the maximum graph in $\langle\mathcal{T}_{n}(q),\preceq\rangle$ for $2<q<n-1$.  By Lemma \ref{lem-8}, these maximum graphs can be  ordered in the following corollary.

\begin{cor}\label{cor-3}
$S_n= CP_{n;\lfloor\frac{n-1}{2}\rfloor,\lceil\frac{n-1}{2}\rceil}\prec CP_{n;\lfloor\frac{n-2}{2}\rfloor,\lceil\frac{n-2}{2}\rceil}\prec\cdots\prec CP_{n;2,2}\prec CP_{n;1,2}\prec CP_{n;1,1}=P_n$.
\end{cor}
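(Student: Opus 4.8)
The plan is to establish the chain link by link: for every $q$ with $3\le q\le n-1$ I would prove the single strict relation
\[
CP_{n;\lfloor\frac{q}{2}\rfloor,\lceil\frac{q}{2}\rceil}\prec CP_{n;\lfloor\frac{q-1}{2}\rfloor,\lceil\frac{q-1}{2}\rceil},
\]
and then concatenate these using the transitivity of $\prec$. By Theorem \ref{qq-thm-1} each term $CP_{n;\lfloor\frac{q}{2}\rfloor,\lceil\frac{q}{2}\rceil}$ is the maximum graph of $\langle\mathcal{T}_{n}(q),\preceq\rangle$, but to order the terms among themselves the only tool needed is the edge-moving transformation of Lemma \ref{lem-8}. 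At the two ends of the chain I would record the identifications $CP_{n;\lfloor\frac{n-1}{2}\rfloor,\lceil\frac{n-1}{2}\rceil}=S_n$ and $CP_{n;1,1}=P_n$, which come directly from the definition once the spine $P_{n-q}$ degenerates.

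For a fixed $q$ I would set $T=CP_{n;\lfloor\frac{q-1}{2}\rfloor,\lceil\frac{q-1}{2}\rceil}$, built on a spine $v_1\cdots v_k$ with $k=n-q+1$ carrying $\lfloor\frac{q-1}{2}\rfloor$ and $\lceil\frac{q-1}{2}\rceil$ pendents at the two ends. Let $v_1$ be an end bearing the fewer pendents (either end if the counts coincide) and look at the end edge $v_1v_2$. Deleting $v_1v_2$ splits $T$ into the star $T_1$ on $v_1$ and its pendents, of order $\lfloor\frac{q-1}{2}\rfloor+1$, and the remainder $T_2$ containing $v_2$; for $3\le q\le n-1$ one checks $|T_1|\ge 2$ and $|T_2|\ge 2$, so the edge-moving transformation of $T$ with respect to $v_1v_2$ (take $u=v_1$, $v=v_2$) is admissible. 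This transformation makes $v_2$ a new pendent at $v_1$ and transfers the remaining neighbours of $v_2$ to $v_1$, so $v_1$ gains exactly one pendent while the spine loses one vertex; the result is a double star path on a spine of $k-1=n-q$ vertices with end counts $\{\lfloor\frac{q-1}{2}\rfloor+1,\lceil\frac{q-1}{2}\rceil\}$. A short parity check shows $\{\lfloor\frac{q-1}{2}\rfloor+1,\lceil\frac{q-1}{2}\rceil\}=\{\lfloor\frac{q}{2}\rfloor,\lceil\frac{q}{2}\rceil\}$, so up to the symmetry $CP_{n;a,b}=CP_{n;b,a}$ the output is exactly $CP_{n;\lfloor\frac{q}{2}\rfloor,\lceil\frac{q}{2}\rceil}$. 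Lemma \ref{lem-8} then gives $T\succ CP_{n;\lfloor\frac{q}{2}\rfloor,\lceil\frac{q}{2}\rceil}$, which is the desired link.

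The step I expect to be most delicate, and would write out carefully, is confirming that the edge-moving operation lands \emph{exactly} on the next double star path rather than merely on some graph of the right order: this is the parity bookkeeping of the two end counts above, together with the degenerate endpoint $q=n-1$, where $k=2$ and collapsing the unique spine edge consolidates all pendents onto a single vertex to produce $S_n$. Checking $|T_1|,|T_2|\ge 2$ throughout the range $3\le q\le n-1$, so that Lemma \ref{lem-8} genuinely applies, is routine but necessary. Once every link is verified, transitivity of $\prec$ assembles them into the full chain.
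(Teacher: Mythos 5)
Your proposal is correct and follows exactly the paper's route: the paper also obtains this chain by applying the edge-moving transformation of Lemma \ref{lem-8} link by link (it simply states "By Lemma \ref{lem-8}, these maximum graphs can be ordered" without details), and your write-up supplies precisely the missing bookkeeping — choosing the end with fewer pendents so that the parity identity $\{\lfloor\frac{q-1}{2}\rfloor+1,\lceil\frac{q-1}{2}\rceil\}=\{\lfloor\frac{q}{2}\rfloor,\lceil\frac{q}{2}\rceil\}$ holds, verifying $|T_1|,|T_2|\ge 2$, and treating the degenerate case $q=n-1$ yielding $S_n$.
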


\subsection{ Minimum graph in the order set $\langle\mathcal{T}_{n}(q),\preceq\rangle$ }

Let $T$  be the starlike tree with center vertex $u$ and  $v_1,v_2,\ldots,v_q$ be the neighbors of $u$. Then    $P_i=P_{v_i}(v_iu)$ is a path component of $T-v_iu$ containing $v_i$. Such a $T$ we call the balanced starlike tree, denote by $SP_{n,q}$, if $|P_j|-|P_i|\le 1$ for any pair of $1\leq i,j\leq q$. We next prove that $SP_{n,q}$ is the unique minimum graph in $\langle\mathcal{T}_{n}(q),\preceq\rangle$.

\begin{thm}\label{thm-6-2}
Let $T\in \mathcal{T}_{n}(q)$, where $2<q<n-1$. Then
$T\succeq SP_{n,q}$,
with equality if and only if $T= SP_{n,q}$.
\end{thm}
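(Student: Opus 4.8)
The plan is to take $T$ to be a minimum graph in $\langle\mathcal{T}_n(q),\preceq\rangle$ and prove $T=SP_{n,q}$ in two stages: first that $T$ must be a starlike tree whose unique branching vertex is a centroidal vertex, and then that its legs are balanced. Both stages rest on Lemma \ref{lem-5}: to contradict minimality it suffices to produce $T'\in\mathcal{T}_n(q)$ obtained from $T$ by a local move that alters $\mu$ on a single edge and lowers its value, for then $T$ and $T'$ are $(\varphi,\mu)$-similar with respect to that edge and Lemma \ref{lem-5}(2) gives $T\succ T'$. The guiding observation is that for a spider with legs (paths from the center) of lengths $\ell_1,\dots,\ell_q$ whose center is a centroid, the edges of the $i$-th leg carry exactly the $\mu$-values $1,2,\dots,\ell_i$, so $r_j$ equals the number of legs of length at least $j$; minimizing the suffix sums $\sum_{i\ge k}r_i$ then amounts to keeping the legs as equal as possible.

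For the first stage, fix a centroidal vertex $u$ of $T$ (Lemma \ref{lem-2}) with branches $T_{v_i}(uv_i)$, and show that minimality forces every branch to be a path. Suppose some branch is not a path; among its vertices of degree $\ge 3$ choose $b$ farthest from $u$, so that the components of the branch beyond $b$ are all paths and there are at least two of them. Let $Q$ be one such pendant path and let $w$ be the neighbour of $b$ on the $u$--$b$ path; form $T'$ by detaching $Q$ from $b$ and re-attaching its root to $w$ (a one-step shift of $Q$ toward $u$). This keeps the leaf count $q$ and leaves every $\mu$-value unchanged except that of the edge $wb$, whose outer side loses the $|Q|$ vertices of $Q$; since $u$ is a centroid this outer side had at most $n/2$ vertices, so $\mu_T(wb)$ drops by $|Q|$. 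Hence $T\succ T'$, contradicting minimality. Therefore all branches at $u$ are paths, i.e.\ $T$ is a spider with center $u$, and its center degree equals its number of leaves, namely $q$.

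For the second stage, write the legs of the spider as $\ell_1\le\cdots\le\ell_q$ (each at most $n/2$ since $u$ is a centroid). If the legs are not balanced then $\ell_q\ge\ell_1+2$; let $T'$ be obtained by removing the endpoint of the longest leg and appending it to the shortest leg, again staying in $\mathcal{T}_n(q)$. Comparing the multisets of $\mu$-values, the only net change is that one edge of value $\ell_q$ is replaced by one edge of value $\ell_1+1$; since $\ell_q>\ell_1+1$, Lemma \ref{lem-5}(2) yields $T\succ T'$, contradicting minimality. Thus the legs differ by at most one, which is precisely the definition of $SP_{n,q}$, giving $T=SP_{n,q}$. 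Because every tree of $\mathcal{T}_n(q)$ other than $SP_{n,q}$ is either a non-spider or an unbalanced spider, each admits such a strict descent, so the unique minimum is $SP_{n,q}$ and the equality clause follows.

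I expect the first stage to be the main obstacle. The balancing move of the second stage is a clean single-edge exchange, whereas reducing an arbitrary tree to a spider requires the right choice of move (shifting the deepest pendant path one step inward, rather than using the edge-moving transformation of Lemma \ref{lem-8}, which would not preserve $q$) together with the centroid bound ensuring the relevant outer side has at most $n/2$ vertices, which guarantees the affected $\mu$-value genuinely decreases. Care is also needed to verify that the shift changes $\mu$ on exactly one edge and that the leaf count is preserved in every degenerate position, for instance when $b$ is adjacent to $u$ or when the moved path is a single vertex.
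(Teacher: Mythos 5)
Your proof is correct, and its two-stage skeleton (first a spider centered at a centroidal vertex, then balanced legs) is exactly the paper's; the difference lies in the second stage. Stage one is essentially the paper's Claim 1: the paper likewise shifts a subtree hanging at a branching vertex $x_1\neq u$ one step toward the centroid (deleting $x_1y_1$ and adding $x_0y_1$) and observes that only $\mu(x_0x_1)$ changes, dropping by $|T_{y_1}(x_1y_1)|$ because the centroid bound caps the side away from $u$ at $n/2$; your extra normalization (deepest branching vertex, pendant path $Q$) is harmless but not needed. One phrasing caveat there: the negation of ``spider with center $u$'' is ``some vertex other than $u$ has degree at least $3$,'' not ``some branch is not a path'' --- a branch can be a path attached to $u$ at one of its interior vertices --- but your move applies verbatim to that case (with $b$ the attachment vertex and $w=u$), so this is a wording slip, not a gap. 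In stage two the paper does something different: given two legs with $|P_{i_0-1}|+2\le|P_{i_0}|$, it re-attaches all of the other $q-2$ legs to the first vertex $v_{i_0}$ of the long leg, which keeps every $\mu$-value literally fixed under the obvious bijection except on the single edge $uv_{i_0}$, whose value drops from $|P_{i_0}|$ to $\min\bigl\{\sum_{i\ne i_0-1}|P_i|,\,|P_{i_0-1}|+1\bigr\}$. Your move --- transferring one leaf from the longest leg to the shortest --- is a more economical transformation, but it shifts the $\mu$-value of every edge along both affected legs, so invoking Lemma \ref{lem-5} requires either your multiset comparison (justified by the observation that a centroid-centered spider's $i$-th leg carries exactly the values $1,\dots,\ell_i$, valid here both before and after the move since $\ell_1+1\le\ell_q-1\le n/2$) or an explicit bijection that re-indexes edges along the two legs. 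In short, the paper's heavier transformation buys a purely local verification, while your lighter transformation pays with global bookkeeping of the edge division vector; both arguments are sound.
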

\begin{proof}
Let $T$ be a minimum graph in $\langle\mathcal{T}_{n}(q),\preceq\rangle$.  We will show that $T=SP_{n,q}$ in what follows. We may assume that $u$ is a  centroidal vertex of $T$ by Lemma \ref{lem-2} (1).
We will prove the following Claim 1 and Claim 2, which, put together, will get our proof.

{\flushleft\bf Claim 1.} $T$ is a starlike tree with centre vertex $u$.

It only needs to show that $T$ has the unique branching vertex $u$. By the way of contradiction, let $x_1\neq u\in V(T)$ with $d_T(x_1)\geq3$.
Without loss of generality, we assume that $P=u\cdots x_0x_1x_2$ is a path in $T$, where $x_0$ may be equal to $u$.
We may chose $y_1\in N(x_1)\backslash P$ due to $d_T(x_1)\ge 3$. Let $T_{y_1}(x_1y_1)$ be the component of $T-x_1y_1$ containing $y_1$. We can construct $T'$ obtained from $T$ by deleting edge $x_1y_1$ and adding $x_0y_1$. Clearly, $T'\in \mathcal{T}_{n}(q)$, and  we now define $\varphi: E(T)\longrightarrow E(T')$ such that
\begin{equation}\nonumber\label{simi-eq-4}\varphi(e)=\left\{\begin{array}{ll}
e, & \mbox{ if $e\not=x_1y_1$ is any edge of $T$}\\
x_0y_1, &  \mbox{ if $e=x_1y_1$. }
\end{array}\right.
\end{equation}
It is clear that $\mu_{T}(e)=\mu_{T'}(\varphi(e))$ if $e\neq x_0x_1$. Therefore, $T$ and $T'$ are $(\varphi,\mu)$-similar with respect to $e_1=x_0x_1$, and $\varphi(e_1)=e_1$. Since $u$ is a centroidal vertex, we have
$$
\mu_{T'}(e_1)=\mu_{T}(x_0x_1)-|T_{y_1}(x_1y_1)|<\mu_{T}(x_0x_1)=\mu_{T}(e_1).
$$
Therefore, $T\succ T'$ by Lemma \ref{lem-5} (2). This contradicts the minimum hypothesis of $T$.
{\flushleft\bf Claim 2.} $T= SP_{n,q}$.

According to Claim 1, we may assume that $T$ is the starlike tree with center vertex $u$ and $v_1,v_2,\ldots,v_{q}$ be the neighbors of $u$. Let $P_i=P_{v_i}(v_iu)$ be the component of $T-v_iu$ containing $v_i$. Then each  $P_i$ is a path.

If we show that $|P_j|-|P_i|\le 1$ for $1\leq i,j\leq q$, then $T= SP_{n,q}$, it is all right. Otherwise, without loss of generality, we assume that $|P_{i_0-1}|+2\leq |P_{i_0}|$ for some $1< i_0\le q$.
Now let $T'$ be the tree obtained from $T$ by deleting the edges $uv_i$ and adding the edges $v_{i_0}v_i$ for $i\not=i_0-1,i_0$. Clearly, $T'\in \mathcal{T}_{n}(q)$. We define $\varphi: E(T)\longrightarrow E(T')$ such that
\begin{equation}\nonumber\label{simi-eq-4}\varphi(e)=\left\{\begin{array}{ll}
e, & \mbox{ if $e\not=uv_{i}$ is any edge of $T$ for $i=1,\ldots,i_0-2, i_0+1,\ldots,q$}\\
v_{i_0}v_i, &  \mbox{ if $e=uv_i$ for $i=1,\ldots,i_0-2, i_0+1,\ldots,q$. }
\end{array}\right.
\end{equation}
It is clear that $\mu_{T}(e)=\mu_{T'}(\varphi(e))$ if $e\neq uv_{i_0}$. Therefore, $T$ and $T'$ are $(\varphi,\mu)$-similar with respect to $e_1=uv_{i_0}$, and $\varphi(e_1)=e_1$. Additionally, we have
$$
\mu_{T'}(e_1)=min\{\sum_{i\not=i_0-1}|P_i|,|P_{i_0-1}|+1\}<|P_{i_0}|=\mu_{T}(e_1).
$$
Therefore, $T\succ T'$ by Lemma \ref{lem-5} (2), a contradiction.

We complete this proof.
\end{proof}

Let $\mathcal{T}_{n}^\Delta$ be a set of trees with order $n$ and maximum degree $\Delta\geq3$. In the next section, we will consider the maximal graph in $\langle\mathcal{T}_{n}^\Delta,\preceq\rangle$.

\section{Maximum  graph in the order set $\langle\mathcal{T}_{n}^\Delta,\preceq\rangle$}

In this section, we will give the maximum  graph in the  order set $\langle\mathcal{T}_{n}^\Delta,\preceq\rangle$. If $\Delta=n-1$ then $\mathcal{T}_{n}^\Delta$ contains exactly one $S_n$, which is both maximum and minimum graph in the
order set $\langle\mathcal{T}_{n}^{n-1},\preceq\rangle$. Similarly if $\Delta=2$ then $T$ is a path. Thus we always assume in this section that $2<\Delta<n-1$.

Let $CP_{n;1,\Delta-1}$ be the graph obtained from a path $P_{n-\Delta}=v_1\cdots v_{n-\Delta}$ by adding $\Delta-1$ pendent vertices at $v_{n-\Delta}$ and one pendent vertex at $v_1$. We will prove that $CP_{n;1,\Delta-1}$ is the unique maximum graph in $\langle\mathcal{T}_{n}^\Delta,\preceq\rangle$.

\begin{thm}\label{PM-thm-1}
Let $T\in \mathcal{T}_{n}^\Delta$ where  $2<\Delta<n-1$. Then
$T\preceq CP_{n;1,\Delta-1}$,
with equality if and only if $T= CP_{n;1,\Delta-1}$.
\end{thm}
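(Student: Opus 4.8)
The plan is to run the extremal-graph method used throughout the paper. Since $\mathcal{T}_n^\Delta$ is finite, $\langle\mathcal{T}_n^\Delta,\preceq\rangle$ has a maximum graph $T$, and I would show $T=CP_{n;1,\Delta-1}$ by proving that any $T$ of a different shape admits an order-increasing transformation that stays inside $\mathcal{T}_n^\Delta$, contradicting maximality. Every comparison is to be delivered by the $(\varphi,\mu)$-similarity criterion of Lemma \ref{lem-5}: I build a bijection $\varphi$ on edge sets that preserves $\mu$ on all edges but one and strictly increases $\mu$ on that edge. Fix a vertex $w$ of degree $\Delta$ and let $T_1,\dots,T_\Delta$ be the components of $T-w$, attached at neighbours $w_1,\dots,w_\Delta$, so $\sum_i|T_i|=n-1$. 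Throughout, the two side-conditions to monitor are that no move creates a vertex of degree exceeding $\Delta$ and that a vertex of degree $\Delta$ survives, so that the new tree lies in $\mathcal{T}_n^\Delta$.

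First I would prove that $w$ is the \emph{only} branching vertex, i.e. that $T$ is the starlike tree (spider) with $\Delta$ legs emanating from $w$. Suppose not, and choose a branching vertex $x\neq w$ farthest from $w$; then the subtrees hanging off $x$ away from $w$ are all paths, and $x$ carries at least two such pendant paths $A$ and $B$. Relocating the leaves of $B$ one at a time onto the tip of $A$, and finally detaching the last vertex of $B$ from $x$, lowers $d(x)$ by one, raises no degree above $2$ except along $A$, and leaves $w$ of degree $\Delta$; hence each intermediate tree is in $\mathcal{T}_n^\Delta$. Since each elementary move pushes one vertex farther from $w$, the two trees differ in $\mu$ on a single edge whose value strictly increases, so Lemma \ref{lem-5}(1) yields a strictly larger tree — a contradiction. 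After this reduction $T$ is a spider with leg lengths $\ell_1\le\cdots\le\ell_\Delta$, $\sum_i\ell_i=n-1$, and because $\Delta\ge 3$ its unique branching vertex $w$ has degree exactly $\Delta$; in particular $T\in\mathcal{T}_n(\Delta)$.

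It remains to unbalance the legs, i.e. to force $\ell_1=\cdots=\ell_{\Delta-1}=1$ and $\ell_\Delta=n-\Delta$, which is exactly $CP_{n;1,\Delta-1}$. As long as two legs have length at least $2$, I would move a single tip vertex from a shorter such leg to the longest leg. This keeps the number of legs, hence $d(w)=\Delta$, and creates no vertex of degree $>2$ off $w$, so we remain in $\mathcal{T}_n^\Delta$. Comparing edge division vectors, the move deletes one edge whose $\mu$-value equals $\ell_i$ (the leg being shortened, read off at its edge incident to $w$) and inserts one edge of larger $\mu$-value along the lengthened leg, so Lemma \ref{lem-5}(1) gives a strict increase; iterating drives every leg but one down to length $1$ and terminates precisely at the spider with legs $1^{\Delta-1}$ and $n-\Delta$. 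Strictness of each step yields the ``equality if and only if'' clause, identifying the maximum graph with $CP_{n;1,\Delta-1}$.

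The hard part is the $\mu$-bookkeeping once the lengthened leg carries more than half the vertices: then the edges of that leg near $w$ have their $\mu$ governed by the $w$-side rather than the outer side, so the naive single swap $\ell_i\mapsto\ell_\Delta+1$ breaks and one must split into cases according to whether $T$ has no center edge, exactly one center edge, or two center edges — the same trichotomy treated in Lemma \ref{lem-7} and Theorem \ref{thm-5}. In each case I would exhibit the correct bijection $\varphi$, pairing the unchanged edges and isolating the one edge whose $\mu$ rises, and use Lemma \ref{lem-4} to locate the center edge(s) relative to $w$. This center-edge analysis is also exactly what makes the borderline configurations of the straightening step in the second paragraph rigorous.
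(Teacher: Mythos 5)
Your proposal follows the paper's general framework (a maximum graph exists; rule out every other shape by an order-increasing transformation inside $\mathcal{T}_n^\Delta$ certified by Lemma \ref{lem-5}), and your two phases correspond to the paper's Claim 1 (starlike) and Claim 2 (leg sizes). But the transformations are genuinely different, and your first phase is substantially simpler than the paper's. The paper proves Claim 1 by shifting a whole branch from a branching vertex one step further from a \emph{centroidal} vertex $u$; since that move can push the receiving vertex's degree above $\Delta$, the paper must argue that whenever the move is blocked the tree would contain an unending chain of degree-$\Delta$ vertices, and it needs a separate path-rotation argument for the case where the centroid has degree strictly between $2$ and $\Delta$. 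Your choice --- take a branching vertex $x\neq w$ \emph{farthest} from a fixed degree-$\Delta$ vertex $w$, so that everything hanging off $x$ away from $w$ is a pendant path, then merge one pendant path into another one leaf at a time --- never raises any degree above $2$ at the receiving end and keeps $d(w)=\Delta$, so the degree cap that forces the paper's case analysis never binds, and the centroid machinery of Lemmas \ref{lem-2} and \ref{lem-4} is not needed at all. In the second phase the paper does a single move (reattach all short legs at the first vertex of the second-longest leg and compare $\min\{\sum_{i=1}^{\Delta-1}|P_i|,\,|P_\Delta|+1\}$ with $|P_{\Delta-1}|$), while you iterate single-tip moves; both work.

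Two cautions. First, your elementary move increases the order only when the receiving path is at least as long as the donating one: moving a tip from a leg of length $b$ onto a leg of length $a$ trades the value $\min\{b,n-b\}$ for $\min\{a+1,n-a-1\}$, which is a strict \emph{decrease} if $a<b-1$. Your phrase ``pushes one vertex farther from $w$'' implicitly forces $a+1>b$, and in phase two you do say ``shorter onto longest,'' but in phase one this choice must be stated explicitly or the cited contradiction fails. Second, the difficulty flagged in your last paragraph is overstated: no center-edge trichotomy is required. Under the shift bijection along the two affected paths, a pendant path of length $a$ contributes exactly the multiset $\{\min\{j,n-j\}:1\le j\le a\}$ of $\mu$-values, so each move changes the edge division vector by the single swap $\min\{b,n-b\}\mapsto\min\{a+1,n-a-1\}$ regardless of whether the long leg passes $n/2$; the required inequality $\min\{b,n-b\}<\min\{a+1,n-a-1\}$ then follows from $a\ge b$ together with the fact that the component of $T-x$ containing $w$ has at least two vertices (it contains $w$ and its other neighbours). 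Writing out that computation is all that is missing from your sketch; it is routine and uniform, and simpler than the case analysis you propose to import from Lemma \ref{lem-7} and Theorem \ref{thm-5}.
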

\begin{proof}
Let $T$ be a maximum graph in $\langle\mathcal{T}_{n}^\Delta,\preceq\rangle$. By Lemma \ref{lem-2} (1), we may assume that $u$ is a  centroidal vertex of $T$ and  show that $T= CP_{n;1,\Delta-1}$. In the following, we need only to show the  Claim 1 and Claim 2, which, put  together, will get our result.

{\flushleft\bf Claim 1.} $T$ is a starlike tree.

First  assume that centroidal vertex $u$ is of maximum vertex  and will show that $T$ is a starlike tree with centre $u$. By the way of contradiction, let $x_1\not=u\in V(T)$ with $d_T(x_1)\geq3$, and
$P=u\cdots x_1$ is a path in $T$. Denote by $D_1(P)$  the set of vertices such that $x_2\in D_1(P)$ iff $P\cdot x_2=u\cdots x_1x_2$ is a path of $T$, by $D_2(P)$ we mean that $x_3\in D_2(P)$ iff $P\cdot x_2x_3=u\cdots x_1x_2x_3$ is a path of $T$ for any $x_3\in D_2(P)$ where $x_2\in D_1(P)$, and $D_k(P)$ is similarly defined for $k\ge3$.

By definition, $D_1(P)\not=\emptyset$. We choose any   $x_2\in D_1(P)$ and  let $P_1=P\cdot x_2=u\cdots x_1x_2$ be a path of $T$. Also we  may select $y_1\in N(x_1)\backslash P_1$ due to $d_T(x_1)\ge 3$. Let $T_{y_1}(x_1y_1)$ be the component of $T-x_1y_1$ containing $y_1$. We can construct $T'$ obtained from $T$ by deleting edge $x_1y_1$ and adding $x_2y_1$. Now we first show that $d_T(x_2)= \Delta$. Since otherwise,  $T'\in \mathcal{T}_{n}^\Delta$, and  we will define $\varphi: E(T)\longrightarrow E(T')$ such that
\begin{equation}\nonumber\label{simi-eq-4}\varphi(e)=\left\{\begin{array}{ll}
e, & \mbox{ if $e\not=x_1y_1$ is any edge of $T$}\\
x_2y_1, &  \mbox{ if $e=x_1y_1$. }
\end{array}\right.
\end{equation}
It is clear that $\mu_{T}(e)=\mu_{T'}(\varphi(e))$ if $e\neq x_1x_2$. Therefore, $T$ and $T'$ are $(\varphi,\mu)$-similar with respect to $e_1=x_1x_2$, and $\varphi(e_1)=e_1$. Additionally, since $u$ is a  centroidal vertex of $T$ on $P$ and $x_1\not=u$, we have
$$
\mu_{T'}(e_1)=\mu_{T}(x_1x_2)+|T_{y_1}(x_1y_1)|>\mu_{T}(x_1x_2)=\mu_{T}(e_1).
$$
Therefore, $T\prec T'$ by Lemma \ref{lem-5} (1). This contradicts  the maximum hypothesis  of $T$. This also  implies that every vertex of $D_1(P)$ has maximum degree $\Delta$ since $x_2$ is an arbitrary vertex in $D_1(P)$. Next we show that $d_T(x_3)=\Delta$ for $x_3\in D_2(P)$.   In fact,  $P_1$ can be extended  as $P_2=u\cdots x_1x_2x_3$ where $x_2\in D_1(P)$. Let $y_2\in N(x_2)\backslash P_2$. We can construct $T''$ obtained from $T$ by deleting $x_2y_2$ and adding $x_3y_2$. Clearly,  if $d_T(x_3)< \Delta$ then $T''\in \mathcal{T}_{n}^\Delta$, and then we define  $\varphi: E(T)\longrightarrow E(T'')$ such that
\begin{equation}\nonumber\label{simi-eq-4}\varphi(e)=\left\{\begin{array}{ll}
e, & \mbox{ if $e\not=x_2y_2$ is any edge of $T$}\\
x_3y_2, &  \mbox{ if $e=x_2y_2$. }
\end{array}\right.
\end{equation}
By regarding $x_2$ as $x_1$ and $y_2$ as $y_1$, we would get  $T\prec T''$ as above arguments, a contradiction again. Similarly  we can show that $d_T(x_4)=\Delta$ for $x_4\in D_3(P)$, and so on. This is impossible since  this procedure can not be terminated and so $T$ is a starlike tree with centre $u$. The above proof also implies that $T$ cannot contain two vertices of degree $\Delta$.

Next assume that $d(u)<\Delta$,  we will show that   $d(u)=2$. Since $T\in \mathcal{T}_n^\Delta$, $T$ has a unique vertex of maximum degree, say $v\not=u$ with $d(v)=\Delta$ according to the above arguments. If there is some $x_1\not=v,u$ such that $3\le d(x_1)< \Delta$, then, as the same arguments, we would get  that $d(x_k)=\Delta$ for $k\ge 2$, where $P_{k-1}=P\cdot  x_2\cdots x_k$ and $x_k\in D_{k-1}(P)$. Thus $T$ is  starlike tree with centre $v$ if $d(u)=2$. Otherwise $3\le d(u)<\Delta$ and $d(x)\leq2$ for any $x\not=v,u$. Thus there exists a path $H=u_1u_2\cdots u_m$ such that $u_t=u$ where $2\le t\le m-1$. Without loss of generality, assume that $t\le \lceil\frac{m}{2}\rceil$. Denote by $e_i=u_iu_{i+1}$ the edges on $H$ for $i=1,2,\ldots,m-1$, we construct a tree  $T^*$  that is obtained from $T$ by deleting edge $e_1=u_1u_2$ and adding a new edge $e_m=u_1u_m$. Now we define $\varphi: E(T)\longrightarrow E(T^*)$ such that
\begin{equation}\nonumber\label{simi-eq-5}\varphi(e)=\left\{\begin{array}{ll}
e, & \mbox{ if $e\not=u_iu_{i+1}$  for $i=1,2,\ldots,m-1$}\\
u_{i+1}u_{i+2}, &  \mbox{ for $e_i=u_iu_{i+1}$ and $i=1,2,\ldots,m-2$ }\\
u_{1}u_{m}, &  \mbox{ for $e_{m-1}=u_{m-1}u_m$.}
\end{array}\right.
\end{equation}
It is easy to verify that $\mu_{T}(e)=\mu_{T'}(\varphi(e))$ if $e\neq u_{t-1}u_{t}$. Therefore, $T$ and $T^*$ are $(\varphi,\mu)$-similar with respect to $e_{t-1}=u_{t-1}u_{t}$, and $\varphi(e_{t-1})=e_{t}$. Recall that $u_t=u$ is centroidal vertex, we have $\mu_T(e_{t-1})=\mu_T(u_{t-1}u_t)=t-1<m-t+1=\mu_{T^*}(u_tu_{t+1})=\mu_{T^*}(e_{t})$,
then $T\prec T^*$ by Lemma \ref{lem-5} (1), a contradiction. Therefore, $d(u)=2$ and $T$ is a starlike tree with centre $v$.

{\flushleft\bf Claim 2.} $T= CP_{n;1,\Delta-1}$.

According to Claim 1, we may assume that $T$ is the starlike tree with centre vertex $v$ and $v_1,v_2,\ldots,v_{\Delta}$ be the neighbors of $v$. Let $P_i=P_{v_i}(v_iv)$ be the component of $T-v_iv$ containing $v_i$. Without loss of generality, we assume that $|P_i|\leq |P_j|$ for $i\leq j$.

If we show that $|P_1|=\cdots=|P_{\Delta-1}|=1$, then $T=CP_{n;1,\Delta-1}$, it is all right. Otherwise, let $|P_{\Delta}|\geq|P_{\Delta-1}|\geq2$.  Let $T'''$ be the tree obtained from $T$ by deleting the edges $vv_i$ and adding the edges $v_{\Delta-1}v_i$ for  $i=1,2,\ldots,\Delta-2$. Clearly, $T'''\in \mathcal{T}_{n}^\Delta$. We define $\varphi: E(T)\longrightarrow E(T''')$ such that
\begin{equation}\nonumber\label{simi-eq-6}\varphi(e)=\left\{\begin{array}{ll}
e, & \mbox{ if $e\not=vv_{i}$ is any edge of $T$ for $i =1,\ldots,\Delta-2$,}\\
v_{\Delta-1}v_i, &  \mbox{ if $e=vv_i$ for $i =1,\ldots,\Delta-2$. }
\end{array}\right.
\end{equation}
It is clear that $\mu_{T}(e)=\mu_{T'''}(\varphi(e))$ if $e\neq vv_{\Delta-1}$. Therefore, $T$ and $T'''$ are $(\varphi,\mu)$-similar with respect to $e_1=vv_{\Delta-1}$, and $\varphi(e_1)=e_1$. Additionally, note that $u$ is centroidal vertex including in $\{v\}\cup V(P_{\Delta})$ due to $|P_{\Delta}|\ge|P_{\Delta-1}|\geq2$,  we have
$$
\mu_{T'''}(e_1)=\min\{\sum_{i=1}^{\Delta-1}|P_i|,|P_{\Delta}|+1\}>|P_{\Delta-1}|=\mu_{T}(e_1).
$$
Therefore, $T\prec T'''$ by Lemma \ref{lem-5} (1), a contradiction.

We complete this proof.
\end{proof}

According to Theorem \ref{PM-thm-1}, given $n$ we know that $T=CP_{n;1,\Delta-1}$ is maximum graph in $\langle\mathcal{T}_{n}^\Delta,\preceq\rangle$ for $2<\Delta<n-1$.  By Lemma \ref{lem-8}, these maximum graphs can be ordered in the following corollary.
\begin{cor}\label{cor-4}
$S_n=CP_{n;1,n-2}\prec CP_{n;1,n-3}\prec\cdots\prec CP_{n;1,3}\prec CP_{n;1,2}\prec CP_{n;1,1}=P_n$.
\end{cor}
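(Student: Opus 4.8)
The plan is to establish the chain one link at a time, realising each consecutive pair as an edge-moving transformation and then applying Lemma \ref{lem-8}. First I would pin down the two endpoints. Setting $\Delta=n-1$ in the definition of $CP_{n;1,\Delta-1}$ collapses the defining path $P_{n-\Delta}=P_1$ to a single vertex $v_1$ carrying all $n-1$ pendent vertices, so $CP_{n;1,n-2}=S_n$; setting $\Delta=2$ gives the path $P_{n-2}$ with one pendent vertex attached at each end, so $CP_{n;1,1}=P_n$. These are exactly the two ends of the asserted chain.

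The core step is to prove $CP_{n;1,m+1}\prec CP_{n;1,m}$ for every $m$ with $1\le m\le n-3$, and here I would exhibit $CP_{n;1,m+1}$ as the edge-moving transformation of $CP_{n;1,m}$. Recall that $CP_{n;1,m}$ is the path $v_1\cdots v_{n-m-1}$ with one pendent vertex at $v_1$ and $m$ pendent vertices at $v_{n-m-1}$. I would split this tree along the edge $uv$ with $u=v_{n-m-2}$ and $v=v_{n-m-1}$: the component $T_1$ containing $u$ is the subpath $v_1\cdots v_{n-m-2}$ together with the pendent vertex at $v_1$, of order $n-m-1\ge 2$ because $m\le n-3$, while the component $T_2$ containing $v$ is the star on $m+1$ vertices centred at $v_{n-m-1}$, of order $m+1\ge 2$ because $m\ge 1$. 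Contracting $uv$ to $u$ and re-attaching $v$ as a fresh pendent vertex at $u$ leaves $v_{n-m-2}$ adjacent to $v_{n-m-3}$ and now carrying the $m$ inherited pendent vertices together with the new pendent vertex $v$, that is $m+1$ pendent vertices, on the shortened path $v_1\cdots v_{n-m-2}$ whose other end $v_1$ still bears one pendent vertex. The resulting tree is precisely $CP_{n;1,m+1}$, so Lemma \ref{lem-8} gives $CP_{n;1,m}\succ CP_{n;1,m+1}$.

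Finally I would chain these inequalities by letting $m$ run from $n-3$ down to $1$, obtaining
$$CP_{n;1,n-2}\prec CP_{n;1,n-3}\prec\cdots\prec CP_{n;1,2}\prec CP_{n;1,1},$$
and then substituting the endpoint identifications $CP_{n;1,n-2}=S_n$ and $CP_{n;1,1}=P_n$. Since $\preceq$ is transitive (as noted in the introduction), the individual links assemble into the full chain.

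The argument is essentially bookkeeping, and the only point requiring care is verifying that a single edge-moving step applied to $CP_{n;1,m}$ yields \emph{exactly} $CP_{n;1,m+1}$ rather than some other caterpillar; this reduces to tracking how the star centred at $v_{n-m-1}$ merges into $v_{n-m-2}$ and acquires one additional pendent vertex. Beyond this routine check there is no genuine obstacle.
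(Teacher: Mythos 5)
Your proposal is correct and follows exactly the paper's route: the paper also derives this chain by repeatedly applying Lemma \ref{lem-8} (the edge-moving transformation), and your verification that contracting the edge $v_{n-m-2}v_{n-m-1}$ in $CP_{n;1,m}$ and re-attaching a pendent vertex yields precisely $CP_{n;1,m+1}$ is the bookkeeping the paper leaves implicit.
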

\begin{remark}
From Theorem \ref{thm-6-2}, we know that $SP_{n,q}$ is minimum graph
in $\langle\mathcal{T}_{n}(q),\preceq\rangle$. However, the minimum graph  in $\langle\mathcal{T}_{n}^\Delta,\preceq\rangle$ leaves unknown.
\end{remark}

\section{Application}
It is well known that  topological index of a graph is usually defined as a function of distance, such as Wiener index of a graph $G$:  $W(G)=\sum_{\{u,v\}\in V(G)}d(u,v)$. For a pair of vertices $\{u,v\}$ in a tree $T\in \mathcal{T}_n$, there is a unique path $P_{u,v}$ connecting them with exactly $d(u,v)$ edges.  Thus $W(T)$ can be represented by a function of $\mu(e)$ as follows:
$$\begin{array}{ll}W(T)&=\sum_{\{u,v\}\in V(T)}d(u,v)=\sum_{\{u,v\}\in V(T)}|E(P_{u,v})|\\
&=\sum_{e=xy\in E(T)}n_x(e)n_y(e)=\sum_{e=xy\in E(T)}n_x(e)(n-n_x(e))\\
&=\sum_{e\in E(T)}\mu(e)(n-\mu(e)).
\end{array}
$$
Let $\mathbf{r}=(r_1,r_2,\ldots,r_{\lfloor\frac{n}{2}\rfloor})$ be the edge division vector of $T$ and $f(x)=x(n-x)$. As in the proof of Theorem 8 in \cite{Vuki}, $W(T)$ can be further simplified as
\begin{equation}\label{WF-eq-22}W(T)=\sum_{e\in E(T)}\mu(e)(n-\mu(e))=\sum_{1\le i\le \lfloor\frac{n}{2}\rfloor}r_if(i).\end{equation}
Since there are large part of topological indices of a graph can be also  represented by some functions of $\mu(e)$ described as Eq. (\ref{WF-eq-22}), the authors in \cite{Vuki} introduced the notions bellow.
\begin{defi}
Let $F:\mathcal{T}_n\rightarrow \mathbb{R}$ be a topological index and let $f:\mathbb{N}\rightarrow \mathbb{R}$ be a real function defined for positive integers. The  topological index $F$ is an edge additive eccentric topological index if it holds that
$F(G)=\sum_{e\in E(G)}f(\mu(e))$.
Function $f$ is called the edge contribution function of index $F$.
\end{defi}
Let $F$ be the edge additive eccentric topological index and $T,T'\in \mathcal{T}_n$. From Eq. (\ref{WF-eq-22}), the authors in  \cite{Vuki} gave that
\begin{equation}\label{WF-eq-23}
\begin{array}{ll}
F(T')-F(T)=&\sum_{1\le i\le \lfloor\frac{n}{2}\rfloor}(r_i'-r_i)f(1)+\sum_{2\le i\le \lfloor\frac{n}{2}\rfloor}(r_i'-r_i)(f(2)-f(1))+\cdots\\
&+\sum_{\lfloor\frac{n}{2}\rfloor\le i\le \lfloor\frac{n}{2}\rfloor}(r_i'-r_i)(f(\lfloor\frac{n}{2}\rfloor)-f(\lfloor\frac{n}{2}\rfloor-1)),
\end{array}
\end{equation}
Eq. (\ref{WF-eq-23}) implies that $F(T)\le F(T')$ (resp., $F(T)< F(T')$) if $T\prec T'$ and $f(x)$ is increasing (resp., proper increasing). Such an index  $F$ is defined in \cite{Vuki} to be of Wiener type if $f(x)$ is increasing, and of anti-Wiener type if $f(x)$ is decreasing. The above  idea provides of a new method to find the extremal graphs and evolute the bounds with respect to topological index $F$. Summarizing the above arguments leads to the following result.
\begin{thm}\label{FF-thm-1}
Let $\langle\mathcal{H}_n, \preceq\rangle$ be an order subset of $\langle\mathcal{T}_n, \preceq\rangle$ and $H$ be an  minimum (maximum) graph of $\langle\mathcal{H}_n, \preceq\rangle$.  Let $F:\mathcal{H}_n\longrightarrow \mathbb{R}$ be an edge additive eccentric topological index. We have\\
(1) If $F$ is an index of Wiener type, then $H$ is also an  minimum (maximum) graph of $\mathcal{H}_n$ with respect to $F$.\\
(2) If $F$ is an index of anti-Wiener type, then $H$ is maximum (minimum) graph of $\mathcal{H}_n$ with respect to $F$.
\end{thm}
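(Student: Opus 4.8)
The plan is to derive the entire statement from the single monotonicity relation already packaged in Eq.~(\ref{WF-eq-23}), so that no new combinatorics on trees is needed. First I would set $R_k=\sum_{i=k}^{\lfloor n/2\rfloor}r_i$ and $R_k'=\sum_{i=k}^{\lfloor n/2\rfloor}r_i'$ for the tail sums of the edge division vectors $\mathbf{r}(T)$ and $\mathbf{r}(T')$ of two trees $T,T'\in\mathcal{H}_n\subseteq\mathcal{T}_n$. Recognizing each inner sum $\sum_{k\le i\le\lfloor n/2\rfloor}(r_i'-r_i)$ appearing in Eq.~(\ref{WF-eq-23}) as the tail-sum difference $R_k'-R_k$ rewrites that identity compactly as
\begin{equation}\nonumber
F(T')-F(T)=(R_1'-R_1)f(1)+\sum_{k=2}^{\lfloor n/2\rfloor}(R_k'-R_k)\bigl(f(k)-f(k-1)\bigr).
\end{equation}
This reduces the whole theorem to a sign analysis of the right-hand side.

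The key observation I would stress is that the potentially troublesome first term drops out: since $\sum_{i=1}^{\lfloor n/2\rfloor}r_i=|E(T)|=n-1$ holds for every tree on $n$ vertices, we have $R_1=R_1'=n-1$, whence $(R_1'-R_1)f(1)=0$ irrespective of the sign of $f(1)$. Therefore
\begin{equation}\nonumber
F(T')-F(T)=\sum_{k=2}^{\lfloor n/2\rfloor}(R_k'-R_k)\bigl(f(k)-f(k-1)\bigr).
\end{equation}
By the definition of $\preceq$, the relation $T\preceq T'$ is precisely $R_k\le R_k'$ for all $k$, i.e.\ $R_k'-R_k\ge 0$. If $F$ is of Wiener type then $f$ is increasing, so $f(k)-f(k-1)\ge 0$ and every summand is nonnegative, giving $F(T)\le F(T')$; thus $\preceq$ is order-preserving for $F$. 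If $F$ is of anti-Wiener type then $f$ is decreasing, so $f(k)-f(k-1)\le 0$ and every summand is nonpositive, giving $F(T)\ge F(T')$; thus $\preceq$ is order-reversing for $F$.

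Finally I would unwind the definition of extremal graph. If $H$ is the minimum graph of $\langle\mathcal{H}_n,\preceq\rangle$, then $H\preceq T$ for all $T\in\mathcal{H}_n$; for $F$ of Wiener type this yields $F(H)\le F(T)$ throughout $\mathcal{H}_n$, so $H$ minimizes $F$, while for $F$ of anti-Wiener type it yields $F(H)\ge F(T)$, so $H$ maximizes $F$. The maximum-graph case is symmetric, with $H\preceq T$ replaced by $T\preceq H$. This settles all four combinations and proves (1) and (2). I do not expect a genuine obstacle here: the only points requiring care are the book-keeping across the four minimum/maximum $\times$ Wiener/anti-Wiener cases and the observation that the $f(1)$ term vanishes, so that no auxiliary hypothesis such as $f(1)\ge 0$ is ever invoked.
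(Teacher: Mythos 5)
Your proposal is correct and follows essentially the same route as the paper: the paper likewise derives the theorem directly from the Abel-type rearrangement in Eq.~(\ref{WF-eq-23}), reads off the sign of $F(T')-F(T)$ from the tail-sum inequalities defining $\preceq$ together with the monotonicity of $f$, and then unwinds the definition of the extremal element of $\langle\mathcal{H}_n,\preceq\rangle$. The only difference is cosmetic: you make explicit that the $f(1)$ term vanishes because $R_1=R_1'=n-1$, a point the paper leaves implicit in Eq.~(\ref{WF-eq-23}).
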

By applying Theorem \ref{FF-thm-1}, the authors in \cite{Vuki} (Theorem 12, 17) proved the  conclusions that we summarize in the Table 1.

\begin{table}[H]\label{tab-1}
\scriptsize
\caption{Some conclusions of indices}
\centering
\begin{tabularx}{420pt}{c|c|c|c|c}
\toprule
Indices & Definition & Edge contribution function & Type & Extremal graphs in $\mathcal{T}_n$ \\
\midrule
\makecell[t]{  Wiener \\ index } &
\makecell[t]{ $W(G)=$\\ $\sum_{\{u,v\}\in V(G)}d(u,v)$} &
\makecell[t]{ $f(x)=x(n-x)$} &
\makecell[t]{ Wiener type} &
\makecell[t]{ $W(S_n)\leq W(T)\leq W(P_n)$}\\
\midrule
\makecell[t]{ Modified \\ Wiener \\  indices } &
\makecell[t]{ \\$^{\lambda}W(G)=$ \\ $\sum_{\{u,v\}\in V(G)}d^{\lambda}(u,v)$} &
\makecell[t]{ \\$f(x)=x^\lambda(n-x)^\lambda$} &
\makecell[t]{ Wiener type for \\$\lambda>0$ and anti-Wiener \\type for $\lambda<0$}&
\makecell[t]{ $^{\lambda}W(S_n)\leq ^{\lambda}W(T)\leq ^{\lambda}W(P_n)$ \\for $\lambda>0$ and \\$^{\lambda}W(P_n)\leq ^{\lambda}W(T)\leq ^{\lambda}W(S_n)$ \\for $\lambda<0$} \\
\midrule
\makecell[t]{ Variable \\ Wiener \\ indices } &
\makecell[t]{ $_{\lambda}W(G)=$ \\$\frac{1}{2}\sum_{e=uv\in E(G)}(n^{\lambda}-$\\${n_u(e)}^{\lambda}-{n_v(e)}^{\lambda})$} &
\makecell[t]{ \\$f(x)=n^\lambda-x^\lambda-(n-x)^\lambda$ } &
\makecell[t]{ Wiener type for \\$\lambda>1$ and anti-Wiener \\type for $\lambda<1$  } &
\makecell[t]{ $_{\lambda}W(S_n)\leq _{\lambda}W(T)\leq _{\lambda}W(P_n)$\\ for $\lambda>1$ and \\$_{\lambda}W(P_n)\leq _{\lambda}W(T)\leq _{\lambda}W(S_n)$ \\for $\lambda<1$ } \\
\midrule
\makecell[t]{ Steiner \\  $k$-Wiener\\  index } &
\makecell[t]{ $SW_k(G)=$ \\ $\sum_{e=uv\in E(G)}\sum_{i=1}^{k-1}$ \\ $\binom{n_u(e)}{i}\binom{n_v(e)}{k-i}$} &
\makecell[t]{ \\$f(x)=\binom{n}{k}-\binom{x}{k}-\binom{n-x}{k}$} &
\makecell[t]{ \\Wiener type } &
\makecell[t]{ \\$SW_k(S_n)\leq SW_k(T)\leq SW_k(P_n)$} \\
\bottomrule
\end{tabularx}
\end{table}

As similar as the above conclusions, we will give the extremal graphs with respect to the  topological indices of  hyper-Wiener index, Wiener-Hosoya index, degree distance, Gutman index and $ABC_2$ index, respectively. It suffices to verify that wether these indices are edge additive eccentric topological index $F(G)$ and wether  the corresponding edge contribution function $f$ is monotonous such that $F(G)=\sum_{e\in E(G)}f(\mu(e))$.

It is easy to verify that  hyper-Wiener index can be written as
$$\begin{array}{ll}
WW(T)
&=\sum_{e=uv\in E(T)}(\frac{1}{2}n_u(e)n_v(e)+\frac{1}{2}n_u(e)^2n_v(e)^2)\\
&=\sum_{e=uv\in E(T)}[\frac{1}{2}n_u(e)(n-n_u(e))+\frac{1}{2}n_u(e)^2(n-n_u(e))^2]\\
&=\sum_{1\le i\le \lfloor\frac{n}{2}\rfloor}r_if_{ww}(i)
\end{array}
$$
where its edge contribution function is $f_{ww}(x)=\frac{1}{2}x(n-x)+\frac{1}{2}x^2(n-x)^2$. It is routine  to verify that $f_{ww}(x)$  is strictly increasing.  Hence the hyper-Wiener index for tree is of Wiener type.

For $e=uv\in E(T)$, note that $h(e)=n_u(e)n_v(e)$ and $h[e]=(n_u(e)-1)(n_v(e)-1)$,  the Wiener-Hosoya index can be written as
$$\begin{array}{ll}
h(T)
&=\sum_{e=uv\in E(T)}[n_u(e)n_v(e)+(n_u(e)-1)(n_v(e)-1)]\\
&=\sum_{e=uv\in E(T)}[n_u(e)(n-n_u(e))+(n_u(e)-1)(n-n_u(e)-1)]\\
&=\sum_{1\le i\le \lfloor\frac{n}{2}\rfloor}r_if_{h}(i)
\end{array}$$
where its edge contribution function is $f_{h}(x)=x(n-x)+(x-1)(n-x-1)$. It is routine  to verify that $f_{h}(x)$  is strictly increasing. Hence the Wiener-Hosoya index for tree is of Wiener type.

The degree distance can be written as
$$\begin{array}{ll}
D'(T)
&=\sum_{e=uv\in E(T)}(4n_u(e)n_v(e)-n)\\
&=\sum_{e=uv\in E(T)}[4n_u(e)(n-n_u(e))-n]\\
&=\sum_{1\le i\le \lfloor\frac{n}{2}\rfloor}r_if_{D'}(i)
\end{array}$$
where its edge contribution function is $f_{D'}(x)=4x(n-x)-n$. It is routine  to verify that $f_{D'}(x)$  is strictly increasing. Hence the degree distance for tree is of Wiener type.

The Gutman index can be written as
$$\begin{array}{ll}
Gut(T)
&=\sum_{e=uv\in E(T)}[4n_u(e)n_v(e)-(2n-1)]\\
&=\sum_{e=uv\in E(T)}[4n_u(e)(n-n_u(e))-(2n-1)]\\
&=\sum_{1\le i\le \lfloor\frac{n}{2}\rfloor}r_if_{Gut}(i)
\end{array}$$
where its edge contribution function is $f_{Gut}(x)=4x(n-x)-(2n-1)$. It is routine to verify that $f_{Gut}(x)$  is strictly increasing. Hence the Gutman index for tree is of Wiener type.

The second atom-bond connectivity index can be written as
$$\begin{array}{ll}
ABC_2(T)
&=\sum_{e=uv\in E(T)}\sqrt{\frac{n-2}{n_u(e)(n-n_u(e))}}\\
&=\sum_{e=uv\in E(T)}\sqrt{n-2}n_u(e)^{-\frac{1}{2}}(n-n_u(e))^{-\frac{1}{2}}\\
&=\sum_{1\le i\le \lfloor\frac{n}{2}\rfloor}r_if_{ABC_2}(i)
\end{array}$$
where its edge contribution function is $f_{ABC_2}(x)=\sqrt{n-2}x^{-\frac{1}{2}}(n-x)^{-\frac{1}{2}}$. It is routine to verify that $f_{ABC_2}(x)$  is strictly decreasing. Hence the second atom-bond connectivity index for tree is of anti-Wiener type.

By setting  $\langle \mathcal{H}_n,\preceq\rangle=\langle\mathcal{C}(n,k),\preceq\rangle, \langle\mathcal{T}(n,k-1),\preceq\rangle,  \langle\mathcal{T}_n(q),\preceq\rangle$ or $\langle\mathcal{T}_{n}^\Delta,\preceq\rangle$, we have determined their extremal graphs in Theorem \ref{thm-5},\ref{thm-4},\ref{CP-thm-1},\ref{qq-thm-1},\ref{thm-6-2} and \ref{PM-thm-1},  respectively. From the above discussions, we get the following theorem by applying Theorem \ref{FF-thm-1}. Its conclusions are summarized in the Table 2.

\begin{thm}\label{thm-9-3}
The extremal graphs in the classes $\mathcal{T}_n$, $\mathcal{C}(n,k)$, $\mathcal{T}(n,k-1)$, $\mathcal{T}_n(q)$ and $\mathcal{T}_{n}^\Delta$ with respect to all the topological indices of $WW(\cdot)$, $h(\cdot)$, $D'(\cdot)$, $Gut(\cdot)$ and $ABC_2(\cdot)$ are listed in Table 2.
\end{thm}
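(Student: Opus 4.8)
The plan is to treat Theorem \ref{thm-9-3} as a direct synthesis of two ingredients already in place: the order-theoretic extremal results of Sections 4--7 together with Corollary \ref{cor-5-2}, and the monotonicity analysis carried out immediately above. First I would recall that each of the five indices has been rewritten in the canonical form $F(T)=\sum_{1\le i\le \lfloor\frac{n}{2}\rfloor}r_if(i)$ with an explicit edge contribution function, so each of $WW(\cdot)$, $h(\cdot)$, $D'(\cdot)$, $Gut(\cdot)$ and $ABC_2(\cdot)$ is an edge additive eccentric topological index. Second, I would invoke the monotonicity facts just verified: $f_{ww}$, $f_{h}$, $f_{D'}$ and $f_{Gut}$ are strictly increasing, hence these four indices are of Wiener type, while $f_{ABC_2}$ is strictly decreasing, hence $ABC_2(\cdot)$ is of anti-Wiener type.

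Third, for each prescribed class $\mathcal{H}_n\in\{\mathcal{T}_n,\mathcal{C}(n,k),\mathcal{T}(n,k-1),\mathcal{T}_n(q),\mathcal{T}_{n}^\Delta\}$ I would read off the maximum and/or minimum graph of $\langle\mathcal{H}_n,\preceq\rangle$ from Corollary \ref{cor-5-2} and Theorems \ref{thm-5}, \ref{thm-4}, \ref{CP-thm-1}, \ref{qq-thm-1}, \ref{thm-6-2} and \ref{PM-thm-1}. Then Theorem \ref{FF-thm-1} applies verbatim: for the four Wiener-type indices the $\preceq$-minimum (resp.\ maximum) graph is the minimizer (resp.\ maximizer) of $F$, whereas for the anti-Wiener type index $ABC_2(\cdot)$ the two roles are interchanged. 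Running through all class--index pairs fills in every entry of Table 2, so the bulk of the work is bookkeeping rather than new argument.

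The only point requiring care --- and the step I would flag as the main obstacle --- is the \emph{uniqueness} of each extremal graph with respect to $F$. Since $T\approx T'$ forces $F(T)=F(T')$, a priori the $F$-extremal graph could fail to be unique; this is precisely the phenomenon illustrated in Remark \ref{rem-1} and Figure \ref{fig-1}, where two non-isomorphic trees share one edge division vector. However, each order-theoretic theorem was proved in the sharp form ``equality in $\preceq$ holds if and only if $T$ equals the stated extremal tree'', which says exactly that the extremal edge division vector is realized by a unique tree within that class. Combining this with the \emph{strict} monotonicity of every edge contribution function and reading Eq.~(\ref{WF-eq-23}) term by term then yields a strict inequality $F(T)<F(T^{+})$ (resp.\ $F(T)>F(T^{-})$) for every non-extremal $T$, so the extremal graph is unique with respect to each index as well. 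This upgrade from the weak conclusion of Theorem \ref{FF-thm-1} to a sharp ``if and only if'' is what legitimizes the single-graph entries recorded in Table 2, and with it the verification is complete.
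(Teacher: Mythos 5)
Your proposal is correct and follows essentially the same route as the paper: rewrite each of $WW(\cdot)$, $h(\cdot)$, $D'(\cdot)$, $Gut(\cdot)$, $ABC_2(\cdot)$ as an edge additive eccentric topological index, check that the edge contribution functions are strictly increasing (Wiener type) or strictly decreasing (anti-Wiener type), and then combine Theorem \ref{FF-thm-1} with the class-specific extremal results of Theorems \ref{thm-5}, \ref{thm-4}, \ref{CP-thm-1}, \ref{qq-thm-1}, \ref{thm-6-2} and \ref{PM-thm-1}. Your additional uniqueness upgrade (strictness of $F(T)<F(T^{+})$ via the ``equality iff'' clauses and Eq.~(\ref{WF-eq-23})) is sound and slightly stronger than what the paper records, but it is not needed for the statement as given, and note it does not literally apply to the $\mathcal{T}_n$ row since Corollary \ref{cor-5-2} is not stated in the sharp ``iff'' form.
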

\begin{table}[H]
\footnotesize
\caption{Some conclusions of indices}
\centering
\begin{tabularx}{420pt}{c|c|c|c|c}
\toprule
Type & \multicolumn{2}{|c|}{Wiener type indices} &  \multicolumn{2}{|c|}{anti-Wiener type indices} \\
\midrule
\makecell[t]{Indices   } &
\multicolumn{2}{|c|}{$WW(\cdot)$, $h(\cdot)$, $D'(\cdot)$, $Gut(\cdot)$} &  \multicolumn{2}{|c|}{$ABC_2(\cdot)$}\\
\midrule
\makecell[t]{ Classes } &
\makecell[t]{ The graphs with \\maximum indices } &
\makecell[t]{ The graphs with \\minimum indices } &
\makecell[t]{ The graphs with \\maximum indices} &
\makecell[t]{ The graphs with \\minimum indices}\\
\midrule
\makecell[t]{$\mathcal{T}_n$ } &
\makecell[t]{$P_n$} &
\makecell[t]{$S_n$ } &
\makecell[t]{$S_n$ }&
\makecell[t]{$P_n$ } \\
\midrule
\makecell[t]{$\mathcal{C}(n,k)$  } &
\makecell[t]{$CP_{\lfloor\frac{n-k}{2}\rfloor,\lceil\frac{n-k}{2}\rceil}$ } &
\makecell[t]{$CP_{n,k}^{\lceil\frac{k}{2}\rceil}$ } &
\makecell[t]{$CP_{n,k}^{\lceil\frac{k}{2}\rceil}$ } &
\makecell[t]{$CP_{\lfloor\frac{n-k}{2}\rfloor,\lceil\frac{n-k}{2}\rceil}$ } \\
\midrule
\makecell[t]{$\mathcal{T}(n,k-1)$  } &
\makecell[t]{ } &
\makecell[t]{$CP_{n,k}^{\lceil\frac{k}{2}\rceil}$ } &
\makecell[t]{$CP_{n,k}^{\lceil\frac{k}{2}\rceil}$  } &
\makecell[t]{ } \\
\midrule
\makecell[t]{$\mathcal{T}_n(q)$  } &
\makecell[t]{$CP_{\lfloor\frac{q}{2}\rfloor,\lceil\frac{q}{2}\rceil}$ } &
\makecell[t]{$SP_{n,q}$} &
\makecell[t]{$SP_{n,q}$  } &
\makecell[t]{$CP_{\lfloor\frac{q}{2}\rfloor,\lceil\frac{q}{2}\rceil}$ } \\
\midrule
\makecell[t]{$\mathcal{T}_{n}^\Delta$  } &
\makecell[t]{$CP_{n;1,\Delta-1}$ } &
\makecell[t]{ } &
\makecell[t]{  } &
\makecell[t]{$CP_{n;1,\Delta-1}$ } \\
\bottomrule
\end{tabularx}
\end{table}

There are some sporadic results published in various literatures \cite{Lu,Zhang0,Gutman2,Yu,Cai,Feng,Tomescu,Bonchev,Liu,Wang,Pandey,Nikoli,Liu1,Chen,Zhang2,Vukic} about the topological indices of a graph, such as the indices of  Wiener index, hyper-Wiener index, Wiener-Hosoya index, degree distance, Gutman index and $ABC_2$ index and so on, in which the authors mainly considered the extremal graphs with respect to some  topological indices on subset of trees, such as $\mathcal{T}_{n}$, $\mathcal{C}(n,k)$, $\mathcal\mathcal{T}(n,k-1)$, $\mathcal{T}_n(q)$ and $\mathcal{T}_{n}^\Delta$. We now summarize our results of Theorem \ref{FF-thm-1} and Theorem \ref{thm-9-3} along with the known results in the following Table 3. In this table, each cell lists the extremal graphs of the class specified,  in which the results marked in black are old,  marked blue are new (in Theorem \ref{FF-thm-1} and Theorem \ref{thm-9-3}) and the question mark are still open.

\begin{sidewaystable}
\caption{Some known and new results of indices}
\centering
\begin{tabularx}{650pt}{c|c|c|c|c|c|c}
\toprule
\footnotesize Indices &\footnotesize Type &  \footnotesize trees in $\mathcal{T}_{n}$ &  \footnotesize trees in $\mathcal{C}(n,k)$&  \footnotesize trees in $\mathcal\mathcal{T}(n,k-1)$ &  \footnotesize trees in $\mathcal{T}_n(q)$ &   \footnotesize trees in $\mathcal{T}_{n}^\Delta$ \\
\midrule
\makecell[t]{ \footnotesize Wiener \\ \footnotesize index } &
\makecell[t]{ \footnotesize minimum\\ \footnotesize maximum \\ \footnotesize references} &
\makecell[t]{ \footnotesize $S_n$\\ \footnotesize $P_n$ \\ \footnotesize \cite{Bonchev}} &
\makecell[t]{ \footnotesize $CP_{n,k}^{\lceil\frac{k}{2}\rceil}$ \\ \footnotesize $CP_{\lfloor\frac{n-k}{2}\rfloor,\lceil\frac{n-k}{2}\rceil}$ \\ \footnotesize \cite{Wang}} &
\makecell[t]{ \footnotesize $CP_{n,k}^{\lceil\frac{k}{2}\rceil}$ \\ \textcolor[rgb]{1.00,0.00,0.00}{?} \\ \footnotesize \cite{Liu,Wang}}&
\makecell[t]{ \footnotesize $SP_{n,q}$\\ \footnotesize $CP_{\lfloor\frac{q}{2}\rfloor,\lceil\frac{q}{2}\rceil}$ \\ \footnotesize \cite{Pandey}} &
\makecell[t]{ \textcolor[rgb]{1.00,0.00,0.00}{?} \\ \footnotesize $CP_{n;1,\Delta-1}$ \\ \footnotesize \cite{Wang}}\\
\midrule
\makecell[t]{ \footnotesize Modified \\ \footnotesize Wiener \\ \footnotesize indices } &
\makecell[t]{ \footnotesize minimum\\ \footnotesize maximum \\ \footnotesize references} &
\makecell[t]{ \scriptsize$S_n$($\lambda>0$), $P_n$($\lambda<0$) \\ \scriptsize $P_n$($\lambda>0$), $S_n$($\lambda<0$) \\ \footnotesize \cite{Nikoli}} &
\makecell[t]{  \textcolor[rgb]{0.00,0.07,1.00}{\scriptsize $CP_{n,k}^{\lceil\frac{k}{2}\rceil}$($\lambda>0$), $CP_{\lfloor\frac{n-k}{2}\rfloor,\lceil\frac{n-k}{2}\rceil}$($\lambda<0$)} \\  \scriptsize \textcolor[rgb]{0.00,0.07,1.00}{$CP_{\lfloor\frac{n-k}{2}\rfloor,\lceil\frac{n-k}{2}\rceil}$($\lambda>0$), $CP_{n,k}^{\lceil\frac{k}{2}\rceil}$($\lambda<0$)}  }&
\makecell[t]{ \scriptsize $CP_{n,k}^{\lceil\frac{k}{2}\rceil}$($\lambda>0$), \textcolor[rgb]{1.00,0.00,0.00}{?}($\lambda<0$)\\ \scriptsize $CP_{n,k}^{\lceil\frac{k}{2}\rceil}$($\lambda<0$), \textcolor[rgb]{1.00,0.00,0.00}{?}($\lambda>0$) \\ \footnotesize \cite{Zhang3}} &
\makecell[t]{ \scriptsize $SP_{n,q}$($\lambda>0$), $CP_{\lfloor\frac{q}{2}\rfloor,\lceil\frac{q}{2}\rceil}$($\lambda<0$)\\ \scriptsize  $CP_{\lfloor\frac{q}{2}\rfloor,\lceil\frac{q}{2}\rceil}$($\lambda>0$), $SP_{n,q}$($\lambda<0$) \\ \footnotesize  \cite{Zhang2}} &
\makecell[t]{ \scriptsize $CP_{n;1,\Delta-1}$($\lambda<0$), \textcolor[rgb]{1.00,0.00,0.00}{?}($\lambda>0$)\\ \scriptsize $CP_{n;1,\Delta-1}$($\lambda>0$), \textcolor[rgb]{1.00,0.00,0.00}{?}($\lambda<0$) \\ \footnotesize \cite{Liu1,Chen}}
\\
\midrule
\makecell[t]{ \footnotesize Variable \\ \footnotesize Wiener \\ \footnotesize indices } &
\makecell[t]{ \footnotesize minimum\\ \footnotesize maximum \\ \footnotesize references} &
\makecell[t]{ \scriptsize$S_n$($\lambda>1$), $P_n$($\lambda<1$) \\ \scriptsize $P_n$($\lambda>1$), $S_n$($\lambda<1$) \\ \footnotesize \cite{Vukic} } &
\makecell[t]{  \scriptsize \textcolor[rgb]{0.00,0.07,1.00}{$CP_{n,k}^{\lceil\frac{k}{2}\rceil}$($\lambda>1$), $CP_{\lfloor\frac{n-k}{2}\rfloor,\lceil\frac{n-k}{2}\rceil}$($\lambda<1$)} \\  \scriptsize \textcolor[rgb]{0.00,0.07,1.00}{$CP_{\lfloor\frac{n-k}{2}\rfloor,\lceil\frac{n-k}{2}\rceil}$($\lambda>1$), $CP_{n,k}^{\lceil\frac{k}{2}\rceil}$($\lambda<1$)}   } &
\makecell[t]{  \textcolor[rgb]{0.00,0.07,1.00}{\scriptsize $CP_{n,k}^{\lceil\frac{k}{2}\rceil}$($\lambda>1$)} \\  \scriptsize \textcolor[rgb]{0.00,0.07,1.00}{$CP_{n,k}^{\lceil\frac{k}{2}\rceil}$($\lambda<1$)} } &
\makecell[t]{  \scriptsize \textcolor[rgb]{0.00,0.07,1.00}{$SP_{n,q}$($\lambda>1$), $CP_{\lfloor\frac{q}{2}\rfloor,\lceil\frac{q}{2}\rceil}$($\lambda<1$)}\\  \scriptsize  \textcolor[rgb]{0.00,0.07,1.00}{$CP_{\lfloor\frac{q}{2}\rfloor,\lceil\frac{q}{2}\rceil}$($\lambda>1$), $SP_{n,q}$($\lambda<1$)} } &
\makecell[t]{  \scriptsize \textcolor[rgb]{0.00,0.07,1.00}{$CP_{n;1,\Delta-1}$($\lambda<1$)}\\  \scriptsize \textcolor[rgb]{0.00,0.07,1.00}{$CP_{n;1,\Delta-1}$($\lambda>1$)} }\\
\midrule
\makecell[t]{ \footnotesize  Steiner \\ \footnotesize  $k$-Wiener\\ \footnotesize  index } &
\makecell[t]{ \footnotesize minimum\\ \footnotesize maximum \\ \footnotesize references} &
\makecell[t]{ \footnotesize $S_n$\\ \footnotesize $P_n$ \\ \footnotesize \cite{Li}} &
\makecell[t]{  \footnotesize \textcolor[rgb]{0.00,0.07,1.00}{$CP_{n,k}^{\lceil\frac{k}{2}\rceil}$} \\  \footnotesize \textcolor[rgb]{0.00,0.07,1.00}{$CP_{\lfloor\frac{n-k}{2}\rfloor,\lceil\frac{n-k}{2}\rceil}$} } &
\makecell[t]{ \footnotesize $CP_{n,k}^{\lceil\frac{k}{2}\rceil}$ \\ \textcolor[rgb]{1.00,0.00,0.00}{?} \\ \footnotesize \cite{Lu}} &
\makecell[t]{ \footnotesize $SP_{n,q}$\\ \footnotesize $CP_{\lfloor\frac{q}{2}\rfloor,\lceil\frac{q}{2}\rceil}$ \\ \footnotesize \cite{Zhang0}}&
\makecell[t]{ \textcolor[rgb]{1.00,0.00,0.00}{?} \\ \footnotesize $CP_{n;1,\Delta-1}$ \\ \footnotesize\cite{Zhang0}}\\
\midrule
\makecell[t]{  \footnotesize hyper \\  \footnotesize -Wiener  \\  \footnotesize index } &
\makecell[t]{ \footnotesize minimum\\ \footnotesize maximum \\ \footnotesize references} &
\makecell[t]{  \footnotesize $S_n$\\ \footnotesize $P_n$ \\ \footnotesize \cite{Gutman2}} &
\makecell[t]{  \footnotesize \textcolor[rgb]{0.00,0.07,1.00}{$CP_{n,k}^{\lceil\frac{k}{2}\rceil}$} \\  \footnotesize \textcolor[rgb]{0.00,0.07,1.00}{$CP_{\lfloor\frac{n-k}{2}\rfloor,\lceil\frac{n-k}{2}\rceil}$} } &
\makecell[t]{ \footnotesize $CP_{n,k}^{\lceil\frac{k}{2}\rceil}$ \\ \textcolor[rgb]{1.00,0.00,0.00}{?} \\ \footnotesize \cite{Yu,Cai}} &
\makecell[t]{ \footnotesize $SP_{n,q}$\\ \footnotesize $CP_{\lfloor\frac{q}{2}\rfloor,\lceil\frac{q}{2}\rceil}$ \\ \footnotesize \cite{Yu}}&
\makecell[t]{ \textcolor[rgb]{1.00,0.00,0.00}{?} \\ \footnotesize $CP_{n;1,\Delta-1}$ \\ \footnotesize \cite{Yu}}\\
\midrule
\makecell[t]{ \footnotesize Wiener \\ \footnotesize -Hosoya \\ \footnotesize index } &
\makecell[t]{ \footnotesize minimum\\ \footnotesize maximum \\ \footnotesize references} &
\makecell[t]{ \footnotesize $S_n$\\ \footnotesize $P_n$ \\ \footnotesize \cite{Feng}} &
\makecell[t]{  \footnotesize \textcolor[rgb]{0.00,0.07,1.00}{$CP_{n,k}^{\lceil\frac{k}{2}\rceil}$} \\  \footnotesize \textcolor[rgb]{0.00,0.07,1.00}{$CP_{\lfloor\frac{n-k}{2}\rfloor,\lceil\frac{n-k}{2}\rceil}$} } &
\makecell[t]{ \footnotesize $CP_{n,k}^{\lceil\frac{k}{2}\rceil}$ \\ \textcolor[rgb]{1.00,0.00,0.00}{?} \\ \footnotesize \cite{Feng}} &
\makecell[t]{  \footnotesize \textcolor[rgb]{0.00,0.07,1.00}{$SP_{n,q}$}\\  \footnotesize \textcolor[rgb]{0.00,0.07,1.00}{$CP_{\lfloor\frac{q}{2}\rfloor,\lceil\frac{q}{2}\rceil}$}}&
\makecell[t]{ \textcolor[rgb]{1.00,0.00,0.00}{?} \\  \footnotesize \textcolor[rgb]{0.00,0.07,1.00}{$CP_{n;1,\Delta-1}$}}\\
\midrule
\makecell[t]{ \footnotesize degree \\ \footnotesize distance } &
\makecell[t]{ \footnotesize minimum\\ \footnotesize maximum \\ \footnotesize references} &
\makecell[t]{ \footnotesize $S_n$\\ \footnotesize $P_n$ \\ \footnotesize \cite{Tomescu}} &
\makecell[t]{  \footnotesize \textcolor[rgb]{0.00,0.07,1.00}{$CP_{n,k}^{\lceil\frac{k}{2}\rceil}$} \\  \footnotesize \textcolor[rgb]{0.00,0.07,1.00}{$CP_{\lfloor\frac{n-k}{2}\rfloor,\lceil\frac{n-k}{2}\rceil}$} } &
\makecell[t]{  \textcolor[rgb]{0.00,0.07,1.00}{\footnotesize $CP_{n,k}^{\lceil\frac{k}{2}\rceil}$} \\ \textcolor[rgb]{1.00,0.00,0.00}{?} } &
\makecell[t]{  \footnotesize \textcolor[rgb]{0.00,0.07,1.00}{$SP_{n,q}$}\\  \footnotesize \textcolor[rgb]{0.00,0.07,1.00}{$CP_{\lfloor\frac{q}{2}\rfloor,\lceil\frac{q}{2}\rceil}$}}&
\makecell[t]{ \textcolor[rgb]{1.00,0.00,0.00}{?} \\  \footnotesize \textcolor[rgb]{0.00,0.07,1.00}{$CP_{n;1,\Delta-1}$}}\\
\midrule
\makecell[t]{ \footnotesize Gutman\\ \footnotesize index } &
\makecell[t]{ \footnotesize minimum\\ \footnotesize maximum \\ \footnotesize references} &
\makecell[t]{ \footnotesize $S_n$\\ \footnotesize $P_n$ \\ \footnotesize\cite{Andova}} &
\makecell[t]{  \footnotesize \textcolor[rgb]{0.00,0.07,1.00}{$CP_{n,k}^{\lceil\frac{k}{2}\rceil}$} \\  \footnotesize \textcolor[rgb]{0.00,0.07,1.00}{$CP_{\lfloor\frac{n-k}{2}\rfloor,\lceil\frac{n-k}{2}\rceil}$} } &
\makecell[t]{  \footnotesize \textcolor[rgb]{0.00,0.07,1.00}{$CP_{n,k}^{\lceil\frac{k}{2}\rceil}$} \\ \textcolor[rgb]{1.00,0.00,0.00}{?} } &
\makecell[t]{ \footnotesize \textcolor[rgb]{0.00,0.07,1.00}{$SP_{n,q}$}\\  \footnotesize \textcolor[rgb]{0.00,0.07,1.00}{$CP_{\lfloor\frac{q}{2}\rfloor,\lceil\frac{q}{2}\rceil}$}}&
\makecell[t]{ \textcolor[rgb]{1.00,0.00,0.00}{?} \\  \footnotesize \textcolor[rgb]{0.00,0.07,1.00}{$CP_{n;1,\Delta-1}$}}\\
\midrule
\makecell[t]{ \footnotesize $ABC_2$\\ \footnotesize index } &
\makecell[t]{ \footnotesize minimum\\ \footnotesize maximum \\ \footnotesize references} &
\makecell[t]{ \footnotesize $P_n$\\ \footnotesize $S_n$ \\ \footnotesize\cite{Rostami}} &
\makecell[t]{  \footnotesize \textcolor[rgb]{0.00,0.07,1.00}{$CP_{\lfloor\frac{n-k}{2}\rfloor,\lceil\frac{n-k}{2}\rceil}$} \\ \footnotesize \textcolor[rgb]{0.00,0.07,1.00}{$CP_{n,k}^{\lceil\frac{k}{2}\rceil}$}} &
\makecell[t]{ \textcolor[rgb]{1.00,0.00,0.00}{?} \\  \footnotesize \textcolor[rgb]{0.00,0.07,1.00}{$CP_{n,k}^{\lceil\frac{k}{2}\rceil}$}} &
\makecell[t]{ \footnotesize \textcolor[rgb]{0.00,0.07,1.00}{$CP_{\lfloor\frac{q}{2}\rfloor,\lceil\frac{q}{2}\rceil}$}\\ \footnotesize \textcolor[rgb]{0.00,0.07,1.00}{$SP_{n,q}$}}&
\makecell[t]{ \footnotesize \textcolor[rgb]{0.00,0.07,1.00}{$CP_{n;1,\Delta-1}$} \\ \textcolor[rgb]{1.00,0.00,0.00}{?}}\\
\bottomrule
\end{tabularx}
\end{sidewaystable}

The extremal graphs given in Theorem \ref{thm-9-3} and Table 3 all  have specific structures whose topological indices can be simply calculated by the expression Eq. (\ref{WF-eq-22}). It is clear that the values of these topological indices of extremal graphs can be used as the sharp bounds for the corresponding classes of trees. As an example we give the bounds of Wiener index in the classes $\mathcal{C}(n,k)$, $\mathcal{T}_n(q)$ and $\mathcal{T}_n^\Delta$ in the following propositions, in which  the proof and calculation are omitted because they are conventional.

\begin{pro}\label{pro-1}
Let $T\in \mathcal{C}(n,k)$ where $k\geq2$, we have
$$W(T)\geq W(CP_{n,k}^{\lceil\frac{k}{2}\rceil})
=\left\{\begin{array}{ll}
-\frac{k^3}{12}+\frac{nk^2}{4}-kn+\frac{13k}{12}+n^2-\frac{5n}{4}, & \mbox{ if $k$ is odd}\\
-\frac{k^3}{12}+\frac{nk^2}{4}-kn+\frac{5k}{6}+n^2-n, &  \mbox{ if $k$ is even }
\end{array}\right.$$
and
$$W(T)\leq W(CP_{\lfloor\frac{n-k}{2}\rfloor,\lceil\frac{n-k}{2}\rceil})
=\left\{\begin{array}{ll}
-\frac{k^3}{12}+\frac{k^2}{4}+\frac{kn^2}{4}-kn+\frac{5k}{6}+\frac{3n^2}{4}-n, & \mbox{ if $n-k$ is even}\\
-\frac{k^3}{12}+\frac{k^2}{4}+\frac{kn^2}{4}-kn+\frac{7k}{12}+\frac{3n^2}{4}-n+\frac{1}{4},
&  \mbox{ if $n-k$ is odd.  }\\
\end{array}\right.$$
\end{pro}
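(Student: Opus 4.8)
The plan is to reduce the proposition to two routine summations: first pin down the extremal graphs, then evaluate the Wiener index through its edge division vector.

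First I would recall that the Wiener index is an edge additive eccentric topological index with edge contribution function $f(x)=x(n-x)$, which is strictly increasing on $\{1,\dots,\lfloor n/2\rfloor\}$; hence $W$ is of Wiener type. Combining this with Theorem \ref{FF-thm-1}(1) and the order-extremal results Theorem \ref{thm-4} and Theorem \ref{thm-5}, the minimum of $W$ over $\mathcal{C}(n,k)$ is attained exactly at $CP_{n,k}^{\lceil k/2\rceil}$ and the maximum exactly at $CP_{n;\lfloor(n-k)/2\rfloor,\lceil(n-k)/2\rceil}$. Because $f$ is \emph{strictly} increasing, Eq. (\ref{WF-eq-23}) upgrades $T\prec T'$ to $W(T)<W(T')$, so the equality characterizations from Theorem \ref{thm-4} and Theorem \ref{thm-5} transfer verbatim to $W$. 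This yields both inequalities together with their equality cases; what remains is to compute the two values.

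For $W(CP_{n,k}^{\lceil k/2\rceil})$, set $s=\lceil k/2\rceil$ and read off the $\mu$-values along the path $P_k=v_1\cdots v_k$ with the $n-k$ pendants hung at $v_s$: a path edge $e_j=v_jv_{j+1}$ has $\mu(e_j)=j$ for $1\le j\le s-1$ (the pendants lie on the far side) and $\mu(e_j)=k-j$ for $s\le j\le k-1$ (the pendants lie on the near side), while each of the $n-k$ pendant edges contributes $\mu=1$. By Eq. (\ref{WF-eq-22}) this gives $W=(n-k)f(1)+\sum_{j=1}^{s-1}f(j)+\sum_{j=1}^{k-s}f(j)$. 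Applying the identity $\sum_{j=1}^{m}j(n-j)=n\binom{m+1}{2}-\tfrac{m(m+1)(2m+1)}{6}$ and substituting $s-1$ and $k-s=\lfloor k/2\rfloor$ (which is where the parity of $k$ enters) produces the two stated closed forms. The computation for the maximum graph is analogous: writing $n_1=\lfloor(n-k)/2\rfloor$ and $n_k=\lceil(n-k)/2\rceil$, a path edge satisfies $\mu(e_j)=\min\{\,j+n_1,\;(k-j)+n_k\,\}$, which equals $j+n_1$ up to the index where the two sides balance and $(k-j)+n_k$ thereafter; feeding this edge division vector into Eq. (\ref{WF-eq-22}) and summing gives the remaining formulas.

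The only real difficulty is bookkeeping rather than ideas: one must locate precisely the edge at which $\mu$ switches from the left expression to the right expression (equivalently, where the center edge or edges sit), record whether the maximal value of $\mu$ equals $\lfloor n/2\rfloor$, and then carry the floors and ceilings through the cubic sums. This is exactly why the answer splits into the parity cases of $k$ for the minimum and of $n-k$ for the maximum. Since these are elementary arithmetic manipulations, I would present the edge division vectors explicitly and omit the expansion of the polynomial sums, as the proposition itself indicates.
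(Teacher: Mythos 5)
Your proposal is correct and takes essentially the same approach the paper intends: the paper omits the proof of Proposition \ref{pro-1} as ``conventional,'' but its framework is precisely yours---obtain the extremal graphs from Theorems \ref{thm-4} and \ref{thm-5}, note that $W$ has strictly increasing edge contribution $f(x)=x(n-x)$ so Theorem \ref{FF-thm-1} applies (with strictness giving the equality characterizations), and evaluate $W$ on the two extremal caterpillars through their edge division vectors via Eq. (\ref{WF-eq-22}). Your $\mu$-value bookkeeping is accurate and the sums reproduce the stated closed forms (for instance they give $W(CP_{6,4}^{2})=28$ and $W(CP_{6;1,1})=35$, matching Table 4).
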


\begin{table}[ht]
\footnotesize
\caption{\footnotesize $W(CP_{n,k}^{\lceil\frac{k}{2}\rceil})$ and $W(CP_{\lfloor\frac{n-k}{2}\rfloor,\lceil\frac{n-k}{2}\rceil})$ for $5\leq n\leq 11$ and $4\leq k\leq 10$}
\centering
\begin{tabular*}{15cm}{p{40pt}p{65pt}p{65pt}|p{50pt}p{65pt}p{65pt}}
\hline
 $(n,k)$ & $W(CP_{n,k}^{\lceil\frac{k}{2}\rceil})$ & $W(CP_{\lfloor\frac{n-k}{2}\rfloor,\lceil\frac{n-k}{2}\rceil})$ & $(n,k)$ & $W(CP_{n,k}^{\lceil\frac{k}{2}\rceil})$ & $W(CP_{\lfloor\frac{n-k}{2}\rfloor,\lceil\frac{n-k}{2}\rceil})$ \\\hline
$(5,4)$  &18  &$20$     & $(6,4)$  &28 &35    \\\hline
$(6,5)$  &31  &$35$     & $(7,4)$  &40 &52    \\\hline
$(7,5)$  &44  &$56$     & $(7,6)$  &50 &56    \\\hline
$(8,4)$  &54  &$74$     & $(8,5)$  &59 &79    \\\hline
$(8,6)$  &67  &$84$     & $(8,7)$  &75 &84    \\\hline
$(9,4)$  &70  &$98$     & $(9,5)$  &76 &108   \\\hline
$(9,6)$  &86  &$114$    & $(9,7)$  &96 &120   \\\hline
$(9,8)$  &108 &$120$    & $(10,4)$ &88 &127   \\\hline
$(10,5)$ &95  &$139$    & $(10,6)$ &107 &151   \\\hline
$(10,7)$ &119 &$158$    & $(10,8)$ &134 &165   \\\hline
$(10,9)$ &149 &$165$    & $(11,4)$ &108 &158   \\\hline
$(11,5)$ &116 &$176$    & $(11,6)$ &130 &190   \\\hline
$(11,7)$ &144 &$204$    & $(11,8)$ &162 &212   \\\hline
$(11,9)$ &180 &$220$    & $(11,10)$ &200 &220   \\\hline
\end{tabular*}
\end{table}

\begin{pro}
Let $T\in \mathcal{T}_n(q)$. Let $n-1=sq+r$ where $q\geq3$ and $0\leq r\leq q-1$, then
$$
W(T)\geq W(SP_{n,q})
=\frac{3q-2}{6}qs^3+(\frac{1}{2}q^2+\frac{3}{2}qr-r)s^2+(r^2+\frac{3}{2}qr+\frac{1}{3}q-r)s+r^2,
$$
and
$$W(T)\leq W(CP_{\lfloor\frac{q}{2}\rfloor,\lceil\frac{q}{2}\rceil})
=\left\{\begin{array}{ll}
\frac{n^3}{6}-\frac{nq^2}{4}+\frac{nq}{2}-\frac{n}{6}+\frac{q^3}{12}+\frac{q^2}{4}-\frac{5q}{6}, & \mbox{ if $q$ is even}\\
\frac{n^3}{6}-\frac{nq^2}{4}+\frac{nq}{2}-\frac{5n}{12}+\frac{q^3}{12}+\frac{q^2}{4}-\frac{7q}{12}+\frac{1}{4},
&  \mbox{ if $q$ is odd. }\\
\end{array}\right.$$
\end{pro}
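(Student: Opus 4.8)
The plan is to separate the two inequalities from the exact evaluations. For the inequalities themselves nothing new is needed: Theorem \ref{thm-6-2} identifies $SP_{n,q}$ as the minimum graph of $\langle\mathcal{T}_n(q),\preceq\rangle$ and Theorem \ref{qq-thm-1} identifies $CP_{n;\lfloor\frac{q}{2}\rfloor,\lceil\frac{q}{2}\rceil}$ as its maximum graph. Since the Wiener index is an edge additive eccentric topological index whose edge contribution function $f(x)=x(n-x)$ is increasing on $1\le x\le\lfloor\frac{n}{2}\rfloor$, it is of Wiener type, so Theorem \ref{FF-thm-1} immediately yields $W(SP_{n,q})\le W(T)\le W(CP_{n;\lfloor\frac{q}{2}\rfloor,\lceil\frac{q}{2}\rceil})$ for every $T\in\mathcal{T}_n(q)$. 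It therefore remains only to compute the two boundary values, and for this I would use the representation $W(G)=\sum_{1\le i\le\lfloor\frac{n}{2}\rfloor}r_if(i)$ from Eq.~(\ref{WF-eq-22}).

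For $W(SP_{n,q})$ I would first read off the edge division vector. Writing $n-1=sq+r$, the balanced starlike tree consists of $r$ branches that are paths on $s+1$ vertices and $q-r$ branches that are paths on $s$ vertices, all sharing the centre $u$. Along any branch with $\ell$ vertices, the edge adjacent to $u$ separates $\ell$ vertices, the next one $\ell-1$, and so on down to the terminal edge which separates a single vertex; since $q\ge3$ forces $\ell\le s+1\le\lfloor\frac{n}{2}\rfloor$, the values $\mu(e)$ along that branch are exactly $1,2,\dots,\ell$. Counting over all branches gives $r_i=q$ for $1\le i\le s$ and $r_{s+1}=r$ (with $r_i=0$ otherwise), whence
\[
W(SP_{n,q})=q\sum_{i=1}^{s}i(n-i)+r\,(s+1)\bigl(n-(s+1)\bigr).
\]
Substituting $n=sq+r+1$ and evaluating the standard power sums $\sum i$ and $\sum i^2$ then produces the claimed cubic in $s$.

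For $W(CP_{n;\lfloor\frac{q}{2}\rfloor,\lceil\frac{q}{2}\rceil})$ I set $a=\lfloor\frac{q}{2}\rfloor$ and $b=\lceil\frac{q}{2}\rceil$ and view the graph as the path $v_1\cdots v_{n-q}$ carrying $a$ pendants at $v_1$ and $b$ pendants at $v_{n-q}$. All $q$ pendant edges have $\mu=1$ and contribute $q(n-1)$; for the path edge $v_jv_{j+1}$ the side containing $v_1$ has $j+a$ vertices, so using $f(\mu(e))=n_x(e)n_y(e)$ one gets $f(\mu(v_jv_{j+1}))=(j+a)(n-j-a)$ directly, irrespective of which side is smaller. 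Hence
\[
W(CP_{n;\lfloor\frac{q}{2}\rfloor,\lceil\frac{q}{2}\rceil})=q(n-1)+\sum_{m=a+1}^{\,n-b-1}m(n-m),
\]
and evaluating this sum, splitting according to the parity of $q$ (which governs whether $a=b$ or $b=a+1$), yields the two cases in the statement.

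The only genuine work is the algebra, and I expect the parity bookkeeping in the second sum to be the main nuisance rather than any real obstacle: one must check that $a+1\le n-b-1$ and that no path edge accidentally has $\mu=1$, which holds because $a=\lfloor\frac{q}{2}\rfloor\ge1$ for $q\ge3$. There is no conceptual difficulty, since all the structural input — that these are the extremal graphs and that $W$ is of Wiener type — is already supplied by the earlier theorems.
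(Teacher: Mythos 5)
Your proposal is correct and follows exactly the route the paper intends: the paper omits the proof of this proposition as ``conventional,'' its implicit argument being precisely the combination of Theorem \ref{FF-thm-1} (Wiener index is of Wiener type) with the extremal graphs from Theorems \ref{qq-thm-1} and \ref{thm-6-2}, followed by a direct evaluation of $W$ via Eq.~(\ref{WF-eq-22}) on the two structured graphs. Your edge-division vector for $SP_{n,q}$ ($r_i=q$ for $i\le s$, $r_{s+1}=r$) and your edge-by-edge sum for the double star path both check out and reproduce the stated formulas, so you have simply supplied the computation the paper left out.
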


\begin{pro}
Let $T\in \mathcal{T}_n^\Delta$, we have
\begin{equation}
\begin{aligned}
W(T)&\leq W(CP_{n;1,\Delta-1})\\ \nonumber
&=\left\{\begin{array}{ll}
\frac{\Delta^3}{3}-\frac{n+1}{2}\Delta^2+\frac{9n-5}{6}\Delta+\frac{n^3}{6}-\frac{7n}{6}+1, & \mbox{ if $\Delta<\lfloor\frac{n}{2}\rfloor$ and $\Delta>\lceil\frac{n}{2}\rceil$}\\
\frac{n^3}{12}+\frac{5n^2}{8}-\frac{19n}{12}+1,
&  \mbox{ if $n\geq4$ is even and $\Delta=\frac{n}{2}$ }\\
\frac{n^3}{12}+\frac{3n^2}{4}-\frac{25n}{12}+\frac{5}{4},
&  \mbox{ if $n\geq5$ is odd and $\Delta=\lfloor\frac{n}{2}\rfloor$ }\\
\frac{n^3}{12}+\frac{n^2}{2}-\frac{13n}{12}+\frac{1}{2},
&  \mbox{ if $n\geq5$ is odd and $\Delta=\lceil\frac{n}{2}\rceil$. }\\
\end{array}\right.
\end{aligned}
\end{equation}
\end{pro}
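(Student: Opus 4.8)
The upper bound itself requires no new work. Theorem~\ref{PM-thm-1} shows that $CP_{n;1,\Delta-1}$ is the maximum element of $\langle\mathcal{T}_n^\Delta,\preceq\rangle$, and the Wiener index is an edge additive eccentric topological index whose edge contribution function $f(x)=x(n-x)$ is strictly increasing on $\{1,\ldots,\lfloor\frac{n}{2}\rfloor\}$, hence of Wiener type (as already recorded in Table~1). Thus Theorem~\ref{FF-thm-1}(1) gives $W(T)\le W(CP_{n;1,\Delta-1})$ for every $T\in\mathcal{T}_n^\Delta$, with equality exactly when $T=CP_{n;1,\Delta-1}$. The only substantive task is to evaluate $W(CP_{n;1,\Delta-1})$ explicitly.

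For the evaluation I would use the symmetric form $W(T)=\sum_{e=xy\in E(T)}n_x(e)n_y(e)$ from Eq.~(\ref{WF-eq-22}), which is convenient precisely because it does not require deciding which side carries $\mu(e)$. Writing $CP_{n;1,\Delta-1}$ as the path $P_{n-\Delta}=v_1\cdots v_{n-\Delta}$ with one pendent vertex at $v_1$ and $\Delta-1$ pendent vertices at $v_{n-\Delta}$, I would split the edge set into two parts. First, the $\Delta$ pendent edges (one at $v_1$ and $\Delta-1$ at $v_{n-\Delta}$) each satisfy $n_x(e)n_y(e)=1\cdot(n-1)$, contributing $\Delta(n-1)$ in total. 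Second, for a path edge $e=v_iv_{i+1}$ the component on the $v_1$-side has exactly $i+1$ vertices (namely $v_1,\ldots,v_i$ together with the leaf at $v_1$), so its contribution is $(i+1)(n-i-1)$. Re-indexing by $m=i+1$ gives the closed form $W(CP_{n;1,\Delta-1})=\Delta(n-1)+\sum_{m=2}^{n-\Delta}m(n-m)$, and the inner sum is evaluated by the standard identities for $\sum m$ and $\sum m^2$.

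Setting $M=n-\Delta$ and expanding $\Delta(n-1)+\tfrac{n}{2}M(M+1)-\tfrac{1}{6}M(M+1)(2M+1)-(n-1)$ as a polynomial in $n$ and $\Delta$ recovers the first displayed expression, $\tfrac{\Delta^3}{3}-\tfrac{n+1}{2}\Delta^2+\tfrac{9n-5}{6}\Delta+\tfrac{n^3}{6}-\tfrac{7n}{6}+1$. The remaining three cases in the statement correspond to the boundary regimes $\Delta=\tfrac{n}{2}$ with $n$ even, and $\Delta=\lfloor\tfrac{n}{2}\rfloor$ or $\Delta=\lceil\tfrac{n}{2}\rceil$ with $n$ odd; there one simply substitutes the prescribed value of $\Delta$ into the general polynomial and simplifies to the stated reduced forms (one checks, for instance, that $(n,\Delta)=(6,3),(7,3),(7,4)$ give $32,52,46$ under both the general formula and the corresponding special formula).

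The step I expect to demand the most care is the bookkeeping around the centroid of $CP_{n;1,\Delta-1}$: by Lemmas~\ref{lem-3-0}--\ref{lem-4}, the position of the center edge or centroidal vertex migrates between the interior of the path and the heavy end as $\Delta$ crosses $\tfrac{n}{2}$, which is exactly what forces a case split if one insists on writing the edge division vector $\mathbf{r}$ term by term and summing $\sum_i r_i f(i)$. Because the Wiener contribution $\mu(e)(n-\mu(e))=n_x(e)n_y(e)$ is insensitive to which side is smaller, I would sidestep this difficulty by summing $n_x(e)n_y(e)$ directly as above, so that all regimes collapse to one uniform polynomial; the separate sub-cases then reduce to routine specialization and algebraic simplification, which is why the paper omits the computation as conventional.
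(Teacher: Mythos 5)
Your proposal is correct and is exactly the ``conventional'' argument the paper omits: the inequality is Theorem~\ref{PM-thm-1} combined with Theorem~\ref{FF-thm-1}(1) (Wiener index being edge additive with strictly increasing contribution $f(x)=x(n-x)$, as in Table~1), and your evaluation of $W(CP_{n;1,\Delta-1})$ via $\sum_{e=xy}n_x(e)n_y(e)$ from Eq.~(\ref{WF-eq-22}) checks out --- expanding $\Delta(n-1)+\sum_{m=2}^{n-\Delta}m(n-m)$ indeed gives $\frac{\Delta^3}{3}-\frac{n+1}{2}\Delta^2+\frac{9n-5}{6}\Delta+\frac{n^3}{6}-\frac{7n}{6}+1$, and substituting $\Delta=\frac{n}{2}$, $\lfloor\frac{n}{2}\rfloor$, $\lceil\frac{n}{2}\rceil$ reproduces the other three displayed formulas (your test values $32$, $52$, $46$ are right). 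Your use of the symmetric form $n_x(e)n_y(e)=\mu(e)(n-\mu(e))$ is a genuine, if small, improvement in bookkeeping: it shows the single cubic polynomial is valid for all $2<\Delta<n-1$, so the proposition's four-way case split is purely cosmetic simplification (and, incidentally, the first case's condition ``$\Delta<\lfloor\frac{n}{2}\rfloor$ and $\Delta>\lceil\frac{n}{2}\rceil$'', vacuous as literally written, should be read with ``or'').
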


At last of this paper, in terms of the formulas in Proposition \ref{pro-1} we list the bounds  of the Wiener index  in the class $\mathcal{C}(n,k)$ for $5\leq n\leq 11$ and $4\leq k\leq 10$ in Table 4.


\begin{thebibliography}{00}\addtolength{\itemsep}{-1.5ex}
\bibitem{Vuki} D. Vuki$\check{c}$evi$\acute{c}$, J. Sedlar, On indices of Wiener and anti-Wiener type, Discrete Appl. Math. 251 (2018) 290-298.
\bibitem{Wiener} H. Wiener, Structural determination of paraffin boiling points, J. Amer. Chem. Soc. 69 (1947) 17-20.
\bibitem{Rand} M. Randi$\acute{c}$, Novel molecular descriptor for structure-property studies, Chem. Phys. Lett. 211 (1993) 478-483.
\bibitem{Randi} M. Randi$\acute{c}$, Wiener-Hosoya index-A novel graph theoretical molecular descriptor. J. Chem. Inf. Comput. Sci. 44 (2004) 373-377.
\bibitem{Klein} D. J. Klein, Z. Mihali$\acute{c}$, D. Plav$\check{s}$i$\acute{c}$ and N. Trimajstri$\acute{c}$, Molecular topological index, a relation with the Wiener index, J. Chem. Inf. Comput. Sci. 32 (1992) 304-305.
\bibitem{Gutman} I. Gutman, Selected properties of the Schultz molecular topological index, J. Chem. Inf. Comput. Sci. 34 (1994) 1087-1089.
\bibitem{Graovac} A. Graovac, M. Ghorbani, A new version of atom-bond connectivity index, Acta Chim. Slov. 57(3)(2010) 609-612.
\bibitem{Rostami} M. Rostami, M. Sohrabi-Haghighat, Further Results on New Version of Atom-Bond Connectivity Index, MATCH Commun. Math. Comput. Chem. 71(1)(2014) 21-32.
\bibitem{Andova} V. Andova, D. Dimitrov, J. Fink, R. $\check{S}$krekovski, Bounds on Gutman Index, MATCH Commun. Math. Comput. Chem. 67(2) (2012) 515-524.
\bibitem{Jordan} C. Jordan, Sur les assemblages de lignes, J. Reine Angew. Math. 70 (1869) 185-190.
\bibitem{Ma} B. Ma, B. Wu, W. Zhang, Proximity and average eccentricity of a graph, Inf. Process. Lett. 112 (10) (2012) 392-395.
\bibitem{Li} X. Li, Y. Mao, I. Gutman, The Steiner Wiener index of a graph, Discuss. Math. Graph Theory, 36 (2016) 455-465.
\bibitem{Lu} L. Lu, Q. Huang, J. Hou and X. Chen, A sharp lower bound on Steiner Wiener index for trees with given diameter, Discrete Math. 341 (2018) 723-731.
\bibitem{Zhang0} J. Zhang, G. Zhang, H. Wang and X. Zhang, Extremal trees with respect to the Stiener Wiener index, Discrete Math. Algorithms Appl. 11 (6) (2019) 1950067.
\bibitem{Gutman2} I. Gutman, W. Linert, I. Lukovits, A.A. Dobrynin, Trees with extremal hyper-Wiener index: Mathematical basis and chemical applications, J. Chem. Inf. Comput. Sci. 37 (1997) 349-354.
\bibitem{Yu} G. Yu, L. Feng, A. Ili$\acute{c}$, The hyper-Wiener index of trees with given parameters, Ars Combinatoria, 96(2010) 395-404.
\bibitem{Cai} G. Cai, G. Yu, J. Cao, et al., The Hyper-Wiener Index of Trees of Order $n$ with Diameter $d$, Discrete Dyn Nat Soc, 2016.
\bibitem{Feng} H. Feng, J. Qian, Minimum trees with respect to Wiener-Hosoya index, Journal of Mathematical Study, 39 (2) (2006) 117-123.
\bibitem{Tomescu} I. Tomescu, Some extremal properties of the degree distance of a graph, Discrete Appl. Math. 98 (1999) 159-163.
\bibitem{Bonchev} D. Bonchev, N. Trinajsti$\acute{c}$, Information theory, distance matrix, and molecular branching, J. Chem. Phys. 67 (1977) 4517-4533.
\bibitem{Liu} H. Liu, X. Pan, On the Wiener index of trees with fixed diameter, MATCH Commun. Math. Comput. Chem. 60 (2008) 85-94.
\bibitem{Wang} S. Wang, X. Guo, Trees with extremal Wiener indices, MATCH Commun. Math. Comput. Chem. 60 (2008) 609-622.
\bibitem{Pandey} D. Pandey, K. L. Patra, Wiener index of graphs with fixed number of pendant or cut vertices, 2019, arXiv:1907.13481v1.
\bibitem{Nikoli} S. Nikoli$\acute{c}$, N. Trinajsti$\acute{c}$, M. Randi$\acute{c}$, Wiener index revisited, Chem. Phys. Lett. 333(3-4)(2001) 319-321.
\bibitem{Gutman-1} I. Gutman, D. Vuki$\check{c}$evi$\acute{c}$, J. $\check{Z}$erovnik, A class of modified Wiener indices, Croat. Chem. Acta, 77 (1-2) (2004) 103-109.
\bibitem{Liu1} M. Liu, B. Liu, On the variable Wiener indices of trees with given maximum degree, Math. Comput. Model. 52 (9-10) (2010) 1651-1659.
\bibitem{Chen} Y. Chen, On the $\lambda$-modified Wiener index, Journal of Southwest University for Nationalities(Natural science Edition), 35 (2) (2009) 241-243.
\bibitem{Zhang3} B. Zhang, B. Zhou, On Modified and Reverse Wiener Indices of Trees, Zeitschrift f$\ddot{u}$r Naturforschung A, 61 (10-11) (2006) 536-540.
\bibitem{Zhang2} B. Zhang, B. Zhou, Modified Wiener indices of trees, MATCH Commun. Math. Comput. Chem. 56 (2006) 179-188.
\bibitem{Vukic} D. Vuki$\check{c}$evi$\acute{c}$, J. $\check{Z}$erovnik, Variable Wiener indices, MATCH Commun. Math. Comput. Chem. 53 (2005) 385-402.
\end{thebibliography}
\end{document}